\documentclass[12pt]{article}

\usepackage[left=2cm,right=2cm,top=4cm,bottom=4cm]{geometry}
\usepackage[T1]{fontenc}
\usepackage[latin1]{inputenc}
\usepackage{lmodern}
\usepackage[nottoc, notlof, notlot]{tocbibind}
\usepackage{amsmath}
\usepackage{amsthm}
\usepackage{amssymb}
\usepackage{ stmaryrd }
\usepackage{graphicx}
\usepackage{tikz} 
\usepackage{dsfont}
\usepackage{xspace}
\usepackage{enumitem}
\usepackage{ upgreek }
\usetikzlibrary{matrix,arrows} 
\usepackage[colorlinks,urlcolor=blue,linkcolor=blue,citecolor=brown]{hyperref}

\newcommand{\bbR}{\mathbb{R}}
\newcommand{\bbQ}{\mathbb{Q}}
\newcommand{\bbZ}{\mathbb{Z}}
\newcommand{\bbN}{\mathbb{N}}

\newcommand{\btheta}{\boldsymbol{\theta}}
\newcommand{\homega}{\hat{\omega}}

\newcommand{\bz}{\boldsymbol{z}}
\newcommand{\bq}{\boldsymbol{q}}

\newcommand{\bA}{\boldsymbol{A}}
\newcommand{\bB}{\boldsymbol{B}}

\theoremstyle{plain}
\newtheorem{thm}{Theorem}

\theoremstyle{remark}

\theoremstyle{plain}
\newtheorem{prop}{Proposition}

\theoremstyle{definition}
\newtheorem*{defn}{Definition}

\theoremstyle{plain}
\newtheorem{lem}{Lemma}

\theoremstyle{plain}

\theoremstyle{plain}

\theoremstyle{plain}

\theoremstyle{plain}
\newtheorem*{rmq}{Remark}

\begin{document}

\title{An optimal bound for the ratio between ordinary and uniform exponents of Diophantine approximation}
\author{Antoine Marnat\footnote{supported by Austrian Science Fund (FWF), Project I 3466-N35 and EPSRC Programme Grant EP/J018260/1}  \, \, and Nikolay G. Moshchevitin\footnote{supported by Russian Science Foundation (RNF) Project 18-41-05001 in Pacific National University
}}
 
 \date{}
\maketitle

\abstract{We provide a lower bound for the ratio between the ordinary and uniform exponents of both simultaneous Diophantine approximation 
to $n$ real numbers 
and Diophantine approximation for one linear  form in $n$ variables. This question was first considered in the 50's by V. Jarn\'ik who  solved the problem for two real numbers and established  certain bounds in higher dimension. Recently different authors reconsidered the question, solving the problem in dimension three with different methods. Considering a new concept of  parametric geometry of numbers, W. M. Schmidt and L. Summerer conjectured that the optimal lower bound is reached at regular systems. It follows from a remarkable result of D. Roy that this lower bound is then optimal. In the present paper we give a proof of this conjecture by W. M. Schmidt and L. Summerer.}

\section{Introduction}

In the $50$'s, V. Jarn\'ik \cite{JAR,VJ2,VJ3} considered exponents of Diophantine approximation, and in particular the ratio between ordinary and uniform exponent. An optimal lower bound expressed as a function of the uniform exponent was established for simultaneous approximation to two real numbers and for one linear form in two variables. The question was reconsidered recently by different authors \cite{Laur,M3d,Mo*,SSfirst, GerMo,GM}. The optimality of V. Jarn\'ik's inequalities for two numbers was shown by M. Laurent \cite{Laur}. The inequality for simultaneous approximation to three real numbers was obtained by the second named author \cite{M3d}. Introducing  parametric geometry of numbers \cite{SSfirst,SS}, W. M. Schmidt and L. Summerer considered recently a new method to obtain the optimal lower bounds for the approximation to three numbers (both in the cases of simultaneous approximation and approximation for one linear form in three variables), and improve the general lower bound in any dimension. They conjectured in this context that the general lower bound in the problem of approximation to $n$ real numbers arise from so-called \emph{regular systems}. The goal of the present paper is to prove this conjecture.   To do this  we use Schmidt's inequality on heights \cite{SchLN} applied to a well-chosen subsequence of best approximation vectors. Our main result is stated in Theorem \ref{MainThm} below. The optimality of our bound follows from 
a recent breakthrough paper by D. Roy \cite{Roy}.   \\

Throughout this paper, the integer $n\geq1$ denotes the dimension of the ambient space, and $\btheta=(\theta_1, \ldots , \theta_n)$ denotes an $n$-tuple of real numbers such that $1,\theta_1, \ldots , \theta_n$ are $\mathbb{Q}$-linearly independent.\\ 

Given $n\geq 1$ and $\btheta\in\bbR^n$, we consider the irrationality measure function
\[ \psi(t) = \min_{q\in\bbZ_+,q\leq t} \max_{1 \leq j \leq n}\|q\theta_j \|,  \]
which gives rise to the ordinary exponent of simultaneous Diophantine approximation
\[\lambda(\btheta) = \sup\{ \lambda : \liminf_{t\to + \infty} t^\lambda \psi(t) < + \infty  \} \]
and the uniform exponent of simultaneous Diophantine approximation
\[\hat{\lambda}(\btheta) = \sup\{ \lambda : \limsup_{t\to + \infty} t^\lambda \psi(t) < + \infty  \}.\]
The irrationality measure function
\[ \varphi(t) = \min_{\bq \in \bbZ^n, 0< \max_{1\leq j \leq n} |q_j| \leq t} \|q_1\theta_1 + \cdots + q_n\theta_n \|  \]
 gives rise to the ordinary exponent of Diophantine approximation by one linear form
\[\omega(\btheta) = \sup\{ \omega : \liminf_{t\to + \infty} t^\omega \varphi(t) < + \infty  \} \]
and the uniform exponent of Diophantine approximation by one linear form
\[\hat{\omega}(\btheta) = \sup\{ \omega : \limsup_{t\to + \infty} t^\omega \varphi(t) < + \infty  \}.\]

These exponents were first introduced and studied by A. Khintchine \cite{K2,K1} and V. Jarn\'ik \cite{JAR}. Dirichlet's \emph{Schubfachprinzip} ensures that for any $\btheta$ with $\bbQ$-linearly independent coordinates with $1$
 \[\omega (\btheta) \geq \homega (\btheta) \geq n \textrm{  and } \lambda(\btheta) \geq \hat{\lambda}(\btheta) \geq 1/n.\] 
Exponents of Diophantine approximation give more detailed information about approximation to $\theta$ in the case when $\theta$ admits approximations  better  than the approximations provided by  Dirichlet's \emph{Schubfachprinzip}. The ordinary exponent deals with  the question 
   whether Dirichlet's \emph{Schubfachprinzip} can be improved for approximation vectors of arbitrarily large size $t$, while uniform exponents deals with the  question whether it can be improved for any sufficiently large upper bound $t$ for the size of approximation vectors. The aim of this paper is to provide a lower bound for the ratios $\lambda(\btheta)/\hat{\lambda}(\btheta)$ and $\omega(\btheta)/\hat{\omega}(\btheta)$ as a function of $\hat{\lambda}(\btheta)$ and $\hat{\omega}(\btheta)$ respectively, in any dimension. In dimension $n=1$ simultaneous approximation and approximation by one linear form coincide. Khintchine \cite{K1} observed that the uniform exponent for an irrational $\btheta$ always takes the value $1$ and it follows from Dirichlet's \emph{Schubfachprinzip} that the ordinary exponent satisfy $\omega(\theta) = \lambda(\theta) \geq 1=\hat{\omega}(\theta) = \hat{\lambda}(\theta)$. In dimension $n=2$, Jarn\'ik proved in \cite{VJ2, VJ3} the inequalities
\begin{equation*}
\cfrac{\lambda(\btheta)}{\hat{\lambda}(\btheta)} \geq \cfrac{\hat{\lambda}(\btheta)}{1-\hat{\lambda}(\btheta)}\;\;\textrm{ and }\;\; \cfrac{\omega(\btheta)}{\hat{\omega}(\btheta)} \geq \hat{\omega}(\btheta)-1. 
\end{equation*}
These inequalities are optimal by a result of M. Laurent \cite{Laur}. In \cite{M3d}, Moshchevitin proved the optimal bound for simultaneous approximation in dimension $n=3$:
\begin{eqnarray}\label{Mo3d}
\cfrac{{\lambda}(\btheta)}{\hat{\lambda}(\btheta)} \geq \cfrac{\hat{\lambda}(\btheta) + \sqrt{4\hat{\lambda}(\btheta) - 3\hat{\lambda}(\btheta)^2}}{2(1-\hat{\lambda}(\btheta))} =\frac{1}{2} \left( \frac{\hat{\lambda}(\btheta)}{1-\hat{\lambda}(\btheta)}+ \sqrt{\left( \frac{\hat{\lambda}(\btheta)}{1-\hat{\lambda}(\btheta)} \right)^2 + \frac{4\hat{\lambda}(\btheta)}{1-\hat{\lambda}(\btheta)}}\right).
\end{eqnarray}
The proof is based on consideration of a special pattern of best approximation vectors. This pattern was discovered in an earlier paper by D. Roy \cite{RoyCube}, where another problem was considered. We discuss this pattern in Section \ref{3nb} when explaining our proof in low dimensions.\\

Schmidt and Summerer provided an alternative proof using parametric geometry of numbers in \cite{SS3d}, and found the following bound for approximation by one linear form in $3$ variables:
\begin{equation}\label{SS3omega}
\cfrac{\omega(\btheta)}{\hat{\omega}(\btheta)} \geq \cfrac{\sqrt{4 \hat{\omega}(\btheta)-3} -1}{2}.
\end{equation}
A simple proof of this bound was given in \cite{Mo*}. In \cite{VJ3}, Jarn\'ik  also provided a lower bound in arbitrary dimension $n\geq2$.
\begin{eqnarray}\label{Jomega}
\cfrac{\omega(\btheta)}{\hat{\omega}(\btheta)} &\geq& \hat{\omega}(\btheta)^{1/(n-1)} - 3, \textrm{ provided that } \hat{\omega}(\btheta) > (5n^2)^{n-1},\\ \label{Jlambda}
\cfrac{\lambda(\btheta)}{\hat{\lambda}(\btheta)} &\geq& \cfrac{\hat{\lambda}(\btheta)}{1-\hat{\lambda}(\btheta)}.
\end{eqnarray}
In fact, these bounds also apply in a more general setting of simultaneous Diophantine approximation by a set of linear forms. \\

Using their new tools of parametric geometry of numbers, Schmidt and Summerer \cite{SS} provided the first general improvement valid for the whole admissible interval of values of the uniform exponents $\hat{\omega}$ and $\hat{\lambda}$. 
\begin{eqnarray}\label{SSomega}
\cfrac{\omega(\btheta)}{\hat{\omega}(\btheta)} \geq \cfrac{(n-2)(\hat{\omega}(\btheta)-1)}{1+(n-3)\hat{\omega}(\btheta)},\\ \label{SSlambda}
\cfrac{\lambda(\btheta)}{\hat{\lambda}(\btheta)} \geq \cfrac{\hat{\lambda}(\btheta)+n-3}{(n-2)(1-\hat{\lambda}(\btheta))}.
\end{eqnarray}

Here relation \eqref{SSlambda} is sharper than relation \eqref{Jlambda}. Relation \eqref{SSomega} is  valid for the whole interval of possible values of $\hat{\omega}(\btheta)$, but Jarn\'ik's asymptotic relation \eqref{Jomega} is better for large $\hat{\omega}(\btheta)$. A simple proof of \eqref{SSlambda} was given in \cite{GerMo}.\\ 

In \cite{SS3d} Schmidt and Summerer conjecture that, as in dimension $n=3$, the general optimal lower bound is reached at \emph{regular systems}. In this paper we show that this conjecture holds. Let us first introduce some notation.\\ 

For given $n\geq1$ and parameters $\alpha^* \geq n$ and $1/n \leq \alpha < 1$, we consider the polynomials
\begin{eqnarray}\label{poly} R_{n,\alpha}(x) = x^{n-1} - \cfrac{\alpha}{1-\alpha}(x^{n-2}+ \cdots +x+1),\\ \label{polybis}
R^*_{n,\alpha^*}(x) = x^{n-1}+ x^{n-2} + \cdots +x +1-\alpha^*. \end{eqnarray}
Note that 
$$R_{n,\alpha}(x)= \frac{\alpha }{\alpha-1} \, x^{n-1} R^{*}_{n,\frac{1}{\alpha}}\left(\frac{1}{x}\right).
$$
 Denote by $G(n,\alpha)$ the unique real positive root of $R_{n,\alpha}(x)$ and by $G^*(n,\alpha^*)$ the unique positive root of $R^*_{n,\alpha^*}(x)$. 
 
Some further necessary properties of there polynomials are discussed in Subsection \ref{wwwww} below.

\vskip+0.4cm 
 Now we are able to formulate the main result of our paper.

\begin{thm}\label{MainThm}
For $\btheta=(\theta_1, \ldots , \theta_n)$ such that $1,\theta_1, \ldots , \theta_n$ are $\mathbb{Q}$-linearly independent, one has
\begin{equation}\label{e-Mthm}
\cfrac{\lambda(\btheta)}{\hat{\lambda}(\btheta)} \geq G(n,\hat{\lambda}(\btheta)) \; \textrm{ and } \; 
\cfrac{\omega(\btheta)}{\hat{\omega}(\btheta)} \geq G^*(n,\hat{\omega}(\btheta)).
\end{equation}
Furthermore, for any $\hat{\omega}\geq n$ and any $C\geq G^*(n,\hat{\omega})$, there exists infinitely many $\btheta=(\theta_1, \ldots , \theta_n)$ such that $1,\theta_1, \ldots , \theta_n$ are $\mathbb{Q}$-linearly independent and 
\[ \hat{\omega}(\btheta) = \hat{\omega} \; \; \textrm{ and } \; \; \omega(\btheta) = C\hat{\omega} \]
and for any $1/n \leq \hat{\lambda} \leq 1$ and any $C\geq G(n,\hat{\lambda})$, there exists infinitely many $\btheta=(\theta_1, \ldots , \theta_n)$ such that $1,\theta_1, \ldots , \theta_n$ are $\mathbb{Q}$-linearly independent and 
\[ \hat{\lambda}(\btheta) = \hat{\lambda} \; \; \textrm{ and } \; \; \lambda(\btheta) = C\hat{\lambda}.\]
\end{thm}

It follows from Roy's Theorem \ref{DR} \cite{Roy} applied to Schmidt-Summerer's \emph{regular systems} \cite{SS3d} \cite{Royreg} that the lower bound is reached and thus optimal. The second part of Theorem \ref{MainThm} refines this observation. Note that for any $\btheta=(\theta_1, \ldots , \theta_n)$ such that $1,\theta_1, \ldots , \theta_n$ are $\mathbb{Q}$-linearly independent, we have $\hat{\omega}(\btheta) \geq n$ and $\hat{\lambda}(\btheta) \in [1/n,1]$, (see \cite{Ger}, \cite{Mar}) hence the constraint on $\hat{\lambda}$ and $\hat{\omega}$ is not restrictive. \\

The main part of Theorem \ref{MainThm} is the lower bound. The proof uses determinants of best approximation vectors, following the idea of \cite{M3d}. It deeply relies on an inequality of Schmidt \cite{SchLN} applied inductively to a well chosen subsequence of best approximation vectors. The second part of Theorem \ref{MainThm} is a consequence of the parametric geometry of numbers, and is proved independently in Section \ref{PGN}.\\

In the next section, we define the main tools needed for the proof: best approximation vectors and their properties. With examples of approximation to $3$ and $4$ numbers  in Section \ref{ex34}, we then provide a proof of Theorem \ref{MainThm} in the important case of simultaneous approximation (Section \ref{anynb}). In Section \ref{lf}, we explain how a hyperbolic rotation reduces the case of approximation by one linear form to the case of simultaneous approximation.

\section{Main tools}

\subsection{Sequences of best approximations}

We denote by $(\bz_l)_{l\in\bbN}$ the sequence of best approximations (or minimal points) to $\btheta\in \bbR^n$. This notion was introduced by Voronoi \cite{Vor} as minimal points in lattices, it was first defined in our context by Rogers \cite{Rog}. It has been used implicitly or explicitly in many proofs concerning exponents of Diophantine approximation. Many important properties  of best approximation vectors are discussed in a survey by Chevallier \cite{Chev}.\\

Let $k\geq1$ be an integer. Let $L$ and $N$ be two maps from $\mathbb{Z}^k$ to $\mathbb{R}_+$, where $N$ represents the height of an approximation vector in $\bbZ^{k}$ and $L$ represents the approximation error. We call a sequence of best approximation vectors $(\bz_l)_{l\geq0} \in (\bbZ^k)^\mathbb{N}$ with respect to $L$ and $N$ a sequence such that
\begin{itemize}
\item{$N(\bz_l)$ is a strictly increasing sequence with $N(\bz_1) \ge 1$,}
\item{$L(\bz_l)$ is a strictly decreasing sequence with $L(\bz_1)\le1$,}
\item{for any approximation vector $\bz \in \mathbb{Z}^k$, if $N(\bz) < N(\bz_{l+1})$ then $L(\bz) \geq L(\bz_l)$. }\\
\end{itemize}

In general we do not have uniqueness of such a sequence, and existence follows if $L$ reaches a minimum on sets of the form \[E_B = \{X\in\mathbb{Z}^k | N(X)\leq B \},\] where $B$ is any real bound.\\

In the context of best approximation vectors for simultaneous Diophantine approximation  for $\mathbb{Q}$-independent numbers $1,\theta_1,...,\theta_n$  
we 
set
 \[L_{\lambda,\btheta}(\bz ) = \max_{1\leq i \leq n} |q \theta_i - a_{i} | \; \; \textrm{ and } N_{\lambda,\btheta}(\bz)= q\]
 for
\[ \bz = (q, a_{1}, a_{2}, \ldots , a_{n}) \in \bbZ^{n+1}, l\in \bbN \]  
and define the  unique sequence of best approximation vectors
\[ \bz_l = (q_l, a_{1,l}, a_{2,l}, \ldots , a_{n,l}) \in \bbZ^{n+1}, l\in \bbN \; \textrm{ with } q_l>0\]
with $q_1 =1$.  So $L_{\lambda,\btheta}(\bz_l) = \xi_l = \max_{1\leq i \leq n} |q_l \theta_i - a_{i,l} | $,
$ N_{\lambda,\btheta}(\bz_l)= q_l$ and 
\begin{equation}\label{defBA}
{1 =q_1} < q_2 < \cdots < q_l < q_{l+1}  < \cdots   \;  \textrm{ and } \; 1> \xi_1> \xi_2 >\cdots > \xi_l > \xi_{l+1} > \cdots \, .\end{equation}
We may also assume that for every $l$ large enough one has
\begin{equation}\label{qxi}
\xi_l \leq q_{l+1}^{-\alpha},
\end{equation}
where $\alpha < \hat{\lambda}(\btheta)$.\\

In the context of best approximation vector for approximation by one linear form, we can

 set
 \[ L_{\omega,\btheta}(\bz) = |q_{1}\theta_1 + \cdots + q_{n}\theta_n - a|\: \textrm{ and } \; N_{\omega,\btheta}(\bz) = \max_{1\leq j \leq n} | q_j | \]
 for
\[ \bz = (q_{1}, q_{2}, \ldots , q_{n}, a) \in \bbZ^{n+1} .\]
In such a  way we define the sequence
\[ \bz_l = (q_{1,l}, q_{2,l}, \ldots , q_{n,l}, a_l) \in \bbZ^{n+1},\,\,\,l=1,2,3.. .\]
with
\[ L_{\omega,\btheta}(\bz_l)=  L_l = q_{1,l}\theta_1 + \cdots + q_{n,l}\theta_n - a_l \; \textrm{ and } \; N_{\omega,\btheta}(\bz_l) = M_l = \max_{1\leq j \leq n} | q_j | .\]
{Here due  to the symmetry}, we may assume that $L_l>0$.  
In the  $\mathbb{Q}$-independent 
case this defines vectors $\bz_l $ uniquely. By definition of best approximations 
\[ 1\le M_1 < M_2 < \cdots < M_l <M_{l+1} < \cdots   \;  \textrm{ and } \; 1> L_1> L_2 >\cdots > L_l > L_{l+1} > \cdots\, . \]
We may also assume that $M_1$ is large enough so that for every $l\geq1$
\begin{equation}\label{LM}
L_l \leq M_{l+1}^{-\alpha^*}.
\end{equation}
where $\alpha^* < \hat{\omega}(\btheta)$.\\

In the context of simultaneous Diophantine approximation, provided that $1, \theta_1, \ldots , \theta_n$ are linearly independent over $\bbQ$, it is known that a sequence of best approximation vectors ultimately spans the whole space $\bbR^{n+1}$. However in the context of approximation by one linear form, the situation is different: it may happen that vectors of best approximation span a strictly lower dimensional subspace of $\bbR^{n+1}$. See the surveys \cite{Mo2,Mo1} by Moshchevitin and the paper \cite{Chev} by Chevallier for more details. Fortunately, if best approximation vectors do not span the whole space $\bbR^{n+1}$ we get a sharper result, since $G(n,\alpha)$ is a decreasing function of $n$ (see Proposition \ref{p-decrG}). Thus, we may assume without loss of generality that in both contexts best approximation vectors ultimately span the whole space $\bbR^{n+1}$.\\


Using sequences of best approximation vectors, to prove that $\cfrac{\lambda(\btheta)}{\hat{\lambda}(\btheta)} \geq G$ it is enough to show that given arbitrary $g<G$, there exists arbitrarily large indices $k$ with $q_{k+1} \gg q_k^g$. Similarly, to prove that $\cfrac{\omega(\btheta)}{\hat{\omega}(\btheta)} \geq G^*$ it is enough to show that given arbitrary $g^*<G^*$ and $\alpha^*<\hat{\omega}(\btheta)$, there exists arbitrarily large indices $k$ with $M_{k+1} \gg M_{k}^{g^*}$ or $L_k \ll M_k^{-\alpha^* g^*}$. Here and below, the Vinogradov symbols $\ll$, $\gg$ and $\asymp$ refer to constants depending on $\btheta$ but not the index $k$. This observation relies on the expression of exponents of Diophantine approximation in terms of best approximation vectors

\begin{eqnarray*}
 \omega(\btheta) = \limsup_{k \to \infty}\left(-\cfrac{\log(L_k)}{\log(M_k)}\right) &,& \hat{\omega}(\btheta) = \liminf_{k \to \infty}\left(-\cfrac{\log(L_k)}{\log(M_{k+1})} \right),\\[4mm]
  \lambda(\btheta) = \limsup_{k \to \infty}\left(-\cfrac{\log(\xi_k)}{\log(q_k)}\right) &,& \hat{\lambda}(\btheta) = \liminf_{k \to \infty}\left(\cfrac{\log(\xi_k)}{\log(q_{k+1})}\right).
  \end{eqnarray*}
  For the sake of convenience, if it does not make confusion, we may omit  $(\pmb{\theta})$ in exponents
  $\lambda (\pmb{\theta}), \hat{\lambda}(\pmb{\theta}),
  \omega(\pmb{\theta}), \hat{\omega} (\pmb{\theta})$.

The proofs in the case of simultaneous approximation and approximation by one linear form rely on the same geometric analysis. The idea is to take $\alpha<\hat{\lambda}(\btheta)$ or $\alpha^* < \hat{\omega}(\btheta)$. For an arbitrarily large index $k$, we construct a pattern of best approximation vectors in which at least one pair of successive best approximation vectors satisfies 
\begin{equation}\label{equivthm}
q_{k+1} \gg q_k^g
\end{equation}
where $g= G(n,\alpha) <G(n,\hat{\lambda})$, in the case of simultaneous approximation and 
\begin{equation}\label{equivthmdual}
M_{k+1} \gg M_{k}^{g^*} \;\; \textrm{ or } L_k \ll M_k^{-\alpha^* g^*}
\end{equation}
where $g^*= G^*(n,\alpha^*) <G(n,\hat{\omega})$, in the case of approximation by one linear form.\\
 
 Given a sublattice $\Lambda\subset \mathbb{Z}^{n+1}$, we denote by $\det (\Lambda) $ the fundamental volume of the lattice $\Lambda$ in the linear subspace
 $\langle \Lambda \rangle_\mathbb{R}$. We recall well known facts about best approximation vectors and fundamental determinants of the related lattices.
 \begin{lem}\label{lambdaspans}
Two consecutive best approximation vectors $\bz_i$ and $\bz_{i+1}$ are $\bbQ$-linearly independent and form a basis of the integer points of the rational subspace they span.
\[ \langle \bz_i, \bz_{i+1} \rangle_\bbZ = \langle \bz_i, \bz_{i+1} \rangle_\bbR \cap \bbZ^{n+1}.\]
\end{lem}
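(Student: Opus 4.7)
The $\bbQ$-linear independence will be a quick warm-up: any relation $\bz_{i+1} = c\bz_i$ with $c\in\bbR$ would force $q_{i+1}=cq_i$ and $\xi_{i+1}=|c|\xi_i$; combined with $0 < q_i < q_{i+1}$ this gives $c>1$ and hence $\xi_{i+1} > \xi_i$, contradicting \eqref{defBA}.

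For the basis statement I would set $\Lambda := \langle \bz_i,\bz_{i+1}\rangle_\bbZ$ and $\Lambda' := \langle \bz_i,\bz_{i+1}\rangle_\bbR \cap \bbZ^{n+1}$; both are rank-$2$ lattices in the plane $P := \langle \bz_i,\bz_{i+1}\rangle_\bbR$, with $\Lambda \subseteq \Lambda'$, and I would prove $\Lambda = \Lambda'$ by contradiction. Given $\bx \in \Lambda' \setminus \Lambda$, subtract integer multiples of $\bz_i, \bz_{i+1}$ to bring $\bx$ into the fundamental parallelogram of $\Lambda$: $\bx = \alpha\bz_i + \beta\bz_{i+1} \in \bbZ^{n+1}$ with $(\alpha,\beta) \in [0,1)^2\setminus\{(0,0)\}$. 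If moreover $\alpha+\beta > 1$, replace $\bx$ with $\bx' := \bz_i + \bz_{i+1} - \bx \in \Lambda' \setminus \Lambda$, whose coefficients $(1-\alpha,1-\beta)$ have sum $2-(\alpha+\beta) < 1$. So I may assume henceforth $\alpha+\beta \leq 1$ in addition to $(\alpha,\beta) \in [0,1)^2 \setminus \{(0,0)\}$. Under these assumptions,
\[
0 \,<\, q(\bx) \,=\, \alpha q_i + \beta q_{i+1} \,<\, q_{i+1}
\]
(using $q_i<q_{i+1}$, $\alpha+\beta \leq 1$ and $(\alpha,\beta)\neq 0$) and, by the coordinatewise triangle inequality together with $\xi_{i+1}<\xi_i$,
\[
L(\bx) \,\leq\, \alpha\xi_i + \beta\xi_{i+1} \,<\, \xi_i.
\]
But the defining property of best approximations forces $L(\bx) \geq L(\bz_i) = \xi_i$ as soon as $0 < N(\bx) < q_{i+1}$, which is the sought contradiction.

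The main (and only non-obvious) step is the symmetry reduction $\bx \leftrightarrow \bz_i + \bz_{i+1} - \bx$ that brings one into the regime $\alpha + \beta \leq 1$: without it, the upper bound $\alpha \xi_i + \beta \xi_{i+1}$ is not guaranteed to fall strictly below $\xi_i$, and the best-approximation property can no longer be played against it. Everything else reduces to the triangle inequality combined with the strict monotonicity of the sequences $(q_l)$ and $(\xi_l)$.
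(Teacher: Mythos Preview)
Your proof is correct. The paper does not give its own argument for this lemma but simply refers the reader to \cite[Lemma~2]{DS}; your write-up is the standard elementary proof (reduce a hypothetical extra lattice point to the half-parallelogram $\alpha+\beta\le 1$ via the central symmetry $\bx\mapsto \bz_i+\bz_{i+1}-\bx$, then contradict the minimality of $\bz_i$), so there is no substantive difference in approach to discuss.
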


See for example \cite[Lemma 2]{DS}.
\begin{lem}\label{estim}
For any $l\geq 1$, consider the lattice $\Lambda_l$ with basis $\bz_l , \bz_{l+1}$ and the lattice $\Gamma_l$ with basis $\bz_{l-1}, \bz_l , \bz_{l+1} $. In the context of simultaneous approximation we have the estimates of their fundamental volumes
\begin{equation}\label{l2}
\det(\Lambda_l) \asymp \xi_l q_{l+1} \;\; \textrm{ and} \;\;  \det( \Gamma_l) \ll \xi_{l-1} \xi_l q_{l+1}.
 \end{equation}
\end{lem}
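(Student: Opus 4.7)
The plan is to exploit a unimodular change of coordinates that exposes the ``small error'' structure of the best approximation vectors. Introduce the linear map $T:\bbR^{n+1}\to\bbR^{n+1}$ defined by
\[ T(q, a_1, \ldots, a_n) = (q,\; q\theta_1 - a_1,\; \ldots,\; q\theta_n - a_n). \]
Its matrix is lower-triangular with diagonal entries $\pm 1$, so $|\det T|=1$, and $T$ preserves the fundamental volume of every sublattice of $\bbZ^{n+1}$. Under $T$, the best approximation vector $\bz_l$ becomes $(q_l, \epsilon_{1,l}, \ldots, \epsilon_{n,l})$, where $\epsilon_{k,l} = q_l\theta_k - a_{k,l}$: the first coordinate has size $q_l$ while the last $n$ coordinates are bounded by $\xi_l$, with equality attained for at least one index.

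The upper bounds come from expanding $\det(\Lambda_l)^2$ and $\det(\Gamma_l)^2$ as sums of squared Pl\"ucker minors of the matrices $[T\bz_l\,|\,T\bz_{l+1}]$ and $[T\bz_{l-1}\,|\,T\bz_l\,|\,T\bz_{l+1}]$ respectively. For $\Lambda_l$, the dominant minors are the $(1,k+1)$-entries $q_l\epsilon_{k,l+1} - q_{l+1}\epsilon_{k,l}$, bounded in absolute value by $2q_{l+1}\xi_l$; the remaining minors only involve $\epsilon$'s and are of order $\xi_l\xi_{l+1}$, which is much smaller. For $\Gamma_l$, expanding each $3\times 3$ minor along the column formed by $q_{l-1}, q_l, q_{l+1}$ produces three terms of the form $q_{\bullet}$ times a $2\times 2$ $\epsilon$-minor; the monotonicity $q_{l-1}<q_l<q_{l+1}$ and $\xi_{l+1}<\xi_l<\xi_{l-1}$ shows the largest is $\ll q_{l+1}\xi_{l-1}\xi_l$, as are any purely $\epsilon$-valued $3\times 3$ minors. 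Summing over the finitely many Pl\"ucker coordinates then yields both upper bounds.

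For the matching lower bound $\det(\Lambda_l)\gg q_{l+1}\xi_l$, the key identity is
\[ \det(\Lambda_l)^2 = \sum_{k=1}^n (q_{l+1}\epsilon_{k,l} - q_l\epsilon_{k,l+1})^2 + \biggl[\Bigl(\sum_{k}\epsilon_{k,l}^2\Bigr)\Bigl(\sum_k\epsilon_{k,l+1}^2\Bigr) - \Bigl(\sum_k\epsilon_{k,l}\epsilon_{k,l+1}\Bigr)^2\biggr], \]
whose bracketed term is nonnegative by Cauchy--Schwarz. Choosing $k_0$ with $|\epsilon_{k_0,l}|=\xi_l$, the $k_0$-th summand is at least $(q_{l+1}\xi_l - q_l\xi_{l+1})^2$. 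The main obstacle is to produce a quantitative multiplicative gap $q_{l+1}\xi_l \gg q_l\xi_{l+1}$: the strict inequality $q_{l+1}\xi_l > q_l\xi_{l+1}$ is immediate from $q_{l+1}>q_l$ and $\xi_l>\xi_{l+1}$, but upgrading it requires additional input from the theory of best approximations---typically Minkowski's theorem applied to the convex body $\{|q|\leq q_{l+1},\,|q\theta_k - a_k|\leq \xi_l,\,1\leq k\leq n\}$, which forbids the lattice configurations that would allow $q_l\xi_{l+1}$ to approach $q_{l+1}\xi_l$.
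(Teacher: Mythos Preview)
Your upper-bound argument is essentially the paper's: both bound the Gram determinant by the largest Pl\"ucker minor after passing to coordinates in which $\bz_l$ has one large entry ($\asymp q_l$) and $n$ small ones ($\ll \xi_l$). One caveat: your shear $T$ is not orthogonal, so it does \emph{not} preserve $k$-dimensional volumes for $k<n+1$ as you claim. What is true is that $T$ changes such volumes only by factors bounded in terms of $\btheta$, since the eigenvalues of its Gram matrix are so bounded; this suffices for the $\ll$ and $\asymp$ conclusions. The paper avoids the issue by taking an \emph{orthogonal} change of basis with $e_0$ parallel to $(1,\theta_1,\ldots,\theta_n)$, which genuinely preserves all volumes.

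The lower bound is where there is a real gap. You reduce to a multiplicative separation $q_l\xi_{l+1}\le c\,q_{l+1}\xi_l$ for some fixed $c<1$, but you do not prove it, and your gesture at Minkowski's theorem on the full $(n{+}1)$-dimensional box does not deliver it: applied there, Minkowski only yields $q_{l+1}\xi_l^{\,n}\le 1$, which says nothing about the ratio $q_l\xi_{l+1}/(q_{l+1}\xi_l)$, and it is not clear that this ratio is bounded away from $1$ in general dimension. The paper bypasses the difficulty entirely by applying Minkowski's first theorem \emph{inside the two-dimensional plane} $\langle\bz_l,\bz_{l+1}\rangle_{\bbR}$: the intersection of the open box $\Pi=\{|z_0|<q_{l+1},\ \max_j|z_0\theta_j-z_j|<\xi_l\}$ with this plane is a symmetric convex body containing no nonzero point of $\Lambda_l$ (by the definition of best approximations), hence has area $\le 4\det(\Lambda_l)$; projecting onto the plane through $(1,\btheta)$ and $\bz_l$ shows this area is $\gg q_{l+1}\xi_l$. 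This two-dimensional application of Minkowski is the missing idea.
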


In the context of approximation by one linear form, we do not have directly such estimates. In section \ref{lf} we explain how hyperbolic rotation provides a helpful analogue.\\

The proof of Lemma \ref{estim} is well known, see for example \cite{Chev} or \cite{Mo1}. For the sake of completeness, and because we want to adapt the proof for the case of approximation by one linear form, we provide a detailed proof. The upper bounds rely on the following lemma (see \cite[Lemma 1]{SchH}), while the lower bound comes from Minkowski's first convex body theorem.
\begin{lem}\label{det2}
Assume $X_1, \ldots ,X_m$ are linearly independent vectors of an Euclidean space $E^n$, and have coordinates $X_t = (x_{t,1}, \ldots , x_{t,n})$ for $1 \leq t \leq m \le n$ in some Cartesian coordinate-system of $E^n$. Then $\det^2(X_1, \ldots , X_m)$ is the sum (with $\binom{m}{n}$ summands) of the squares of the absolute values of the determinants of the $(m \times m)$-submatrices of the matrix $(x_{t,j})_{1 \leq t \leq m, 1 \leq j \leq n}$.
\end{lem}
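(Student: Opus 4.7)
The plan is to recognise Lemma~\ref{det2} as the Cauchy--Binet identity applied to the Gram matrix of $X_1, \ldots, X_m$. Let $A = (x_{t,j})_{1 \leq t \leq m,\, 1 \leq j \leq n}$ be the $m \times n$ matrix whose $t$-th row is the coordinate vector of $X_t$. The quantity $\det^2(X_1, \ldots, X_m)$ stands, by definition, for the squared $m$-dimensional Euclidean volume of the parallelepiped spanned by the $X_t$, i.e.\ the Gram determinant
\[
\det(A A^{\mathsf T}) = \det\bigl(\langle X_s, X_t\rangle\bigr)_{1 \leq s,t \leq m}.
\]
The content of the lemma is therefore the identity $\det(A A^{\mathsf T}) = \sum_S \det(A_S)^2$, where $S$ runs over all $\binom{n}{m}$ size-$m$ subsets of $\{1, \ldots, n\}$ (the $\binom{m}{n}$ in the statement should be read in this way) and $A_S$ denotes the $m \times m$ submatrix formed by the columns of $A$ indexed by $S$.

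The cleanest route is via the exterior algebra $\Lambda^m(\mathbb{R}^n)$. Expanding each $X_t = \sum_j x_{t,j}\, e_j$ and invoking the multilinearity and antisymmetry of $\wedge$ yields
\[
X_1 \wedge \cdots \wedge X_m \;=\; \sum_{S = \{i_1 < \cdots < i_m\}} \det(A_S)\, e_{i_1} \wedge \cdots \wedge e_{i_m},
\]
so that $\det(A_S)$ is precisely the component of $X_1 \wedge \cdots \wedge X_m$ along the basis vector $e_{i_1} \wedge \cdots \wedge e_{i_m}$. Because the wedge products of $m$-tuples of standard basis vectors form an orthonormal basis of $\Lambda^m(\mathbb{R}^n)$ for the induced inner product, Parseval's identity gives
\[
\|X_1 \wedge \cdots \wedge X_m\|^2 \;=\; \sum_S \det(A_S)^2.
\]
Combining this with the classical identification $\|X_1 \wedge \cdots \wedge X_m\|^2 = \det\bigl(\langle X_s, X_t\rangle\bigr)$ gives the lemma.

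An alternative purely matricial argument avoids exterior algebra: apply the Leibniz expansion to $\det(A A^{\mathsf T})$, group the monomials according to which column-subset $S$ of $A$ supports the nonzero contribution, and identify each packet as $\det(A_S)^2$ after recognising the sum over pairs of permutations as the square of a single determinant. No genuine obstacle is expected; the statement is classical (this is the form recorded in \cite[Lemma 1]{SchH}) and is only needed later to produce upper bounds on $\det(\Lambda_l)$ and $\det(\Gamma_l)$ in Lemma~\ref{estim}, via the trivial observation that each $m \times m$ minor is bounded by the product of the maximal coordinates of the rows involved.
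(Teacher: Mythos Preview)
Your argument is correct: the identity is precisely Cauchy--Binet for the Gram matrix $AA^{\mathsf T}$, and your exterior-algebra derivation via $\|X_1\wedge\cdots\wedge X_m\|^2$ is a clean way to obtain it. You also rightly flag the typo in the statement (the number of summands is $\binom{n}{m}$, not $\binom{m}{n}$).

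There is nothing to compare against in the paper: it does not supply a proof of this lemma at all, only the reference ``see \cite[Lemma~1]{SchH}''. The lemma is quoted as a black box and used immediately in the proof of Lemma~\ref{estim}. Your write-up therefore goes beyond what the paper does, and either of your two approaches (exterior algebra or the direct Leibniz/Cauchy--Binet expansion) would serve as a self-contained justification.
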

\begin{proof}[Proof of Lemma \ref{estim}]
The proof relies on the geometric fact that the best approximation $\bz_l = (q_l, a_{1,l}, a_{2,l}, \ldots , a_{n,l}) \in \bbZ^{n+1}$ satisfy \eqref{defBA}. We first prove the upper bounds.\\

Consider the $2$-dimensional fundamental volume of the lattice spanned by $\bz_l, \bz_{l+1}$. The coordinates of these vectors form the matrix
\[\left(\begin{array}{ccccc}q_l & a_{1,l} & \cdots & a_{n,l} \\q_{l+1} & a_{1,l+1} & \cdots  &a_{n,l+1}\end{array}\right).\]
However it is not convenient to use this matrix to apply Lemma \ref{det2}. We consider a special choice of Cartesian coordinates. We take the system of orthogonal unit vector $(e_0, e_1, \ldots , e_n)$ in the following way: $e_0$ is parallel to $(1, \theta_1, \ldots , \theta_n)$ and $e_1, \cdots , e_n$ are arbitrary. Then, in our new coordinates 

\[\bz_l = (Z_{l}, \Xi_{1,l}, \ldots , \Xi_{n,l})\]
where $Z_{l} \asymp q_l$ and $|\Xi_{i,l}| \ll \xi_l$.\\
 Now we consider the $2 \times (n+1)$ matrix
\[\left(\begin{array}{ccccc}Z_{l} & \Xi_{1,l} & \cdots & \Xi_{n,l} \\Z_{l+1} & \Xi_{1,l+1} & \cdots  &\Xi_{n,l+1}\end{array}\right).\]
If $M_{i,j}$ is the $(2 \times 2 )$ minor of index $i,j$, we have by Lemma \ref{det2}
\[\det(\Lambda_l)^2 = \sum_{0\leq i < j \leq n}M_{i,j}^2 \ll \max_{0\leq i < j \leq n}M_{i,j}^2  \ll |Z_{l+1}|^2 \max_{1 \leq i \leq n} |\Xi_{i,l}|^2 \ll (\xi_l q_{l+1})^2.\]

Consider the $3$-dimensional fundamental volume $ \det( \Gamma_l)$ of the lattice spanned by $\bz_{l-1}, \bz_l, \bz_{l+1}$. Denote by $M_{i,j,k}$ the $3\times 3$ minors of the matrix 
\[\left(\begin{array}{ccccc}
Z_{l-1} & \Xi_{1,l-1} & \cdots & \Xi_{n,l-1} \\
Z_{l} & \Xi_{1,l} & \cdots & \Xi_{n,l} \\
Z_{l+1} & \Xi_{1,l+1} & \cdots  &\Xi_{n,l+1}\end{array}\right).\]

By Lemma \ref{det2} we have
\[\det( \Gamma_l)^2 \ll \sum_{0\leq i < j <k \leq n}M_{i,j,k}^2 \ll \max_{0\leq i < j <k\leq n}M_{i,j,k}^2 \ll | Z_{l+1} \Xi_l \Xi_{l-1} |^2 \ll |q_{l+1} \xi_l \xi_{l-1}|^2.\]

We now prove the lower bound for $\det(\Lambda_l)$. Consider the symmetric convex body
 \[\Pi = \{  \bz  \mid |z_0| <  q_{l+1} , \max_{1\leq j \leq n} |z_0 \theta_i - z_i| < \xi_l  \}\]
 and the intersection $\mathcal{P} = \Pi\cap\langle  \bz_l, \bz_{l+1} \rangle_{\bbR}$.   
 The intersection $ \mathcal{P}\cap \langle  \bz_l, \bz_{l+1} \rangle_{\bbZ}$ is reduced to zero by definition of the best approximation.
 Hence Minkowski's first convex body theorem ensures that for the two-dimensional convex set $\mathcal{P}$ we have 
$$
 \textrm{area} ({\mathcal{P}}) \leq 4 \det (\Lambda_l).
 $$
 The intersection of $\mathcal{P}$ with the coordinate hyperplane $ \{ z_0=0\}$ is an interval with endpoints ${\bA}$ and ${\bB}$ of length $ |{\bA}{\bB}| \ge 2\xi_l$.
So  $\mathcal{P}$ contains a polygon $\mathcal{P}'\subset \mathcal{P}$ with vertices ${\bA}, {\bB},   -\bz_{l+1},\bz_{l+1}$.  It is clear that the Euclidean distance between the point  
 $\bz_{l+1}$  and the line ${\bA} {\bB}$ is greater than $q_{l+1}$. We deduce the lower bound for the area of $\mathcal{P}'$
$$
 \textrm{area} ({\mathcal{P}}') \geq 4 q_{l+1}\xi_l.
 $$
 This yields  $q_{l+1}\xi_l\le \det (\Lambda_l)$.
 
 \end{proof}

\paragraph{Notation} We denote by calligraphic letter $\mathcal{S}$ the sets of best approximation vectors $\{ \bz_k, \ldots , \bz_m \}$. Given such a set $\mathcal{S}$, we denote by Greek letter $\Gamma = \langle  \bz_k, \ldots , \bz_m \rangle_{\bbZ}$ the lattice spanned by its elements, and by bold Roman letter $\mathbf{S}= \langle  \bz_k, \ldots , \bz_m \rangle_{\bbR}$ the rational subspace spanned over $\bbR$. Finally, we denote with Gothic letter $\mathfrak{S}$ the underlying lattice of integer points $\mathfrak{S}= \mathbf{S}\cap\bbZ^n$. Note that $ \Gamma \subset \mathfrak{S} $. We should note that two- and  three-dimensional objects play a special role in our proofs (see e.g. Lemma \ref{estim} and Lemma \ref{Keylemma}). Therefore, if our objects are $2$-dimensional, we rather use the letters $\mathcal{L}, \Lambda, \mathbf{L}$ and $\mathfrak{L}$, following notation of previous papers \cite{GM,Mo2,Mo1,M3d,Mo*} dealing with low-dimensional cases. For certain  sets $\mathcal{S}$ 
of consecutive best approximation vectors we will use the word
 \emph{pattern}.
 Fore example three successive independent best approximation vectors 
 $\pmb{z}_{l-1},\pmb{z}_l,\pmb{z}_{l+1}$ form a simplest pattern.
 More complicated patterns may consist of combinations of
  triples of successive best approximation vectors
  connected by certain rules.
  If a pattern $\mathcal{S}$ is the union of say four   patterns $\mathcal{S}_1,\mathcal{S}_2, \mathcal{S}_3$ and $\mathcal{S}_4$, we denoted it by 
\[ \mathcal{S} \quad : \quad \mathcal{S}_1 - \mathcal{S}_2 - \mathcal{S}_3 - \mathcal{S}_4.\]
If moreover the two patterns $\mathcal{S}_2$ and $\mathcal{S}_3$ generate the same rational subspace, we denoted by
\[ \mathcal{S} \quad : \quad \mathcal{S}_1 - \mathcal{S}_2 \equiv \mathcal{S}_3 - \mathcal{S}_4.\]
Finally, if the rational subspaces generated by $\mathcal{S}_1$ and $\mathcal{S}_2$ have intersection $\mathbf{Q}$ and $\mathfrak{Q} = \mathbf{Q}\cap \bbZ^n$ is its lattice of integer points, we denote it by either
\[ \mathcal{S}_1 \underset{{\mathbf{Q}}}{-} \mathcal{S}_2 \quad \textrm{ or } \quad \mathcal{S}_1 \underset{{\mathfrak{Q}}}{-} \mathcal{S}_2. \]

\subsection{Key lemma}
 
 The following lemma plays a key role in the proof of Theorem \ref{MainThm}.
 
 \begin{lem}[$\Gamma_- \underset{\Lambda}{-}\Gamma_+$]\label{Keylemma}
 In the context of simultaneous Diophantine approximation, consider $(\bz_l)_{l\in\bbN}$ the sequence of best approximations to the point $\btheta\in \bbR^n$. 
 Suppose that $ k>\nu$ and  triples
\[ \mathcal{S}_- := \{ \pmb{z}_{\nu-1}, \pmb{z}_\nu, \pmb{z}_{\nu+1} \} \,\,\,\,\text{and} \,\,\,\, \mathcal{S}_+ := \{ \pmb{z}_{k-1}, \pmb{z}_k, \pmb{z}_{k+1}\} \]
consist of linearly independent consecutive best approximation vectors. Suppose that
\begin{equation}\label{zz2}
 \langle  \pmb{z}_\nu, \pmb{z}_{\nu+1} \rangle_\mathbb{Z} =  \langle  \pmb{z}_{k-1}, \pmb{z}_{k} \rangle_\mathbb{Z} =:\Lambda.
 \end{equation}
 and consider the three-dimensional lattices
\[ \mathfrak{S}_- = \langle  \pmb{z}_{\nu-1}, \pmb{z}_\nu, \pmb{z}_{\nu+1} \rangle_\mathbb{R}\cap \mathbb{Z}^n,
 \; \; \textrm{ and } \; \;
   \mathfrak{S}_+ = \langle  \pmb{z}_{k-1}, \pmb{z}_k, \pmb{z}_{k+1} \rangle_\mathbb{R}\cap \mathbb{Z}^n.\]
 
 Suppose that for positive $s$ and $t$ the following estimate holds
 \begin{equation}\label{zz1}
 ({\rm det}\, \mathfrak{S}_-)^s \, ({\det}\, \mathfrak{S}_+)^t \gg {\det}\, \Lambda .
 \end{equation}
Suppose that the index of our vectors are large enough so that for $\alpha < \hat{\lambda}(\btheta)$.\\
 \begin{equation}\label{zz3}
 \xi_j \le q^{-\alpha}_{j+1} \; \; \textrm{ for } \; \;  j = \nu-1,\nu, k -1, k.
 \end{equation}
 Define
\begin{equation}\label{defg} g (s,t) =  \frac{\alpha s}{(1-\alpha)(s-w (s,t))} = \frac{\alpha(t+w(s,t)-1)-w(s,t)+1}{(1-\alpha)t}. \end{equation}
where the second equality comes from $w (s,t)\in (0,1)$ being  the root of the equation
 \begin{equation}\label{eqo}
 w^2 -\left( s+1+\frac{\alpha}{1-\alpha} t\right) w + s = 0.
 \end{equation}
 Assume the positivity condition
  \begin{equation}\label{positivity}
 (1-\alpha)s +t-1 \geq 0.
 \end{equation}
 Then
\begin{equation}\label{zz4}
 \text{either}\,\,\,\,    q_{\nu+1}\gg q_\nu^{g(s,t)}\,\,\,\, \text{or}\,\,\,\,     q_{k+1}\gg q_k^{g(s,t)}.
 \end{equation}
 
 \end{lem}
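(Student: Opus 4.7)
The strategy is a proof by contradiction. Assume that both inequalities in \eqref{zz4} fail, namely that $q_{\nu+1}\ll q_\nu^{g(s,t)}$ \emph{and} $q_{k+1}\ll q_k^{g(s,t)}$ with small implicit constants; the goal is to contradict the hypothesis \eqref{zz1} combined with the Lemma \ref{estim} estimates.

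The first step is to feed into \eqref{zz1} the volume estimates of Lemma \ref{estim} together with the assumption \eqref{zz2}. Lemma \ref{estim} gives the upper bounds
\[
\det\mathfrak{S}_-\ \ll\ \xi_{\nu-1}\,\xi_\nu\,q_{\nu+1},\qquad \det\mathfrak{S}_+\ \ll\ \xi_{k-1}\,\xi_k\,q_{k+1}.
\]
Moreover, since $\langle\bz_\nu,\bz_{\nu+1}\rangle_\bbZ=\langle\bz_{k-1},\bz_k\rangle_\bbZ=\Lambda$, all intermediate best approximation vectors $\bz_\nu,\dots,\bz_k$ lie in $\Lambda$, and Lemma \ref{estim} then yields $\xi_j q_{j+1}\asymp \det \Lambda$ for every $\nu\le j\le k-1$, so in particular
\[
\xi_\nu\asymp \frac{\det \Lambda}{q_{\nu+1}},\qquad \xi_{k-1}\asymp \frac{\det \Lambda}{q_k}.
\]
Substituting these identities into \eqref{zz1}, the $q_{\nu+1}$ factors cancel; after applying $\xi_{\nu-1}\le q_\nu^{-\alpha}$ and $\xi_k\le q_{k+1}^{-\alpha}$ from \eqref{zz3}, one arrives at an inequality of the form
\[
q_\nu^{-\alpha s}\,(\det \Lambda)^{s+t-1}\,q_k^{-t}\,q_{k+1}^{(1-\alpha)t}\ \gg\ 1.
\]

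The next step is to split $(\det \Lambda)^{s+t-1}$ using the two available upper bounds $\det \Lambda\le q_{\nu+1}^{1-\alpha}$ and $\det \Lambda\le q_k^{1-\alpha}$ via a convex weight $w'\in[0,1]$, writing
\[
(\det \Lambda)^{s+t-1}\ \le\ q_{\nu+1}^{(1-\alpha)w'(s+t-1)}\,q_k^{(1-\alpha)(1-w')(s+t-1)}.
\]
The positivity condition \eqref{positivity} ensures $s+t-1\ge\alpha s\ge 0$, so that this split preserves the direction of the inequality. Inserting the contradiction hypotheses $q_{\nu+1}\ll q_\nu^g$ and $q_{k+1}\ll q_k^g$ then yields an inequality of the form $q_\nu^{A(g,w')}\,q_k^{B(g,w')}\gg 1$. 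The idea is to choose $w'$ so that both exponents $A,B$ vanish at $g=g(s,t)$. With the substitution $w=s-w'(s+t-1)$, the equations $A=0$ and $B=0$ reduce respectively to the two expressions for $g(s,t)$ given in \eqref{defg}, and equating them produces exactly the quadratic \eqref{eqo} for $w$. For $g$ taken slightly below $g(s,t)$ both exponents turn strictly negative, contradicting $q_\nu,q_k\to\infty$.

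The main obstacle will be the verification that the positivity condition \eqref{positivity} is precisely what makes the balancing choice of $w'$ legal. A direct computation gives $P(1-t)=t\bigl((1-\alpha)s+t-1\bigr)/(1-\alpha)$ for the quadratic $P$ in \eqref{eqo}; combined with $P(0)=s>0$ and $P(1)=-\alpha t/(1-\alpha)<0$, this shows that the unique root $w\in(0,1)$ of $P$ satisfies $w\ge 1-t$ if and only if $(1-\alpha)s+t-1\ge 0$. Since $w\ge 1-t$ is in turn equivalent to $w'=(s-w)/(s+t-1)\le 1$, the positivity condition is exactly what guarantees the convex combination above is valid, and hence what allows the balancing argument to go through.
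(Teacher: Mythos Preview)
Your proof is correct and essentially the same as the paper's: the paper writes $\det\Lambda=(\xi_\nu q_{\nu+1})^{w}(\xi_{k-1}q_k)^{1-w}$ with the very same $w=w(s,t)$ and then splits directly into cases (a) and (b), which is equivalent to your contradiction argument under the change of variable $w=s-w'(s+t-1)$. Your explicit computation of $P(1-t)$ to justify $w\ge 1-t$ via \eqref{positivity} is in fact more detailed than the paper's one-line assertion; the only cosmetic point is that your final phrasing ``$g$ slightly below $g(s,t)$ contradicting $q_\nu,q_k\to\infty$'' would be cleaner if stated, as the paper does, as a direct case-split yielding one of the two inequalities with a uniform implied constant.
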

 
When the parameters are $s=t=1$, this lemma directly provides the result for the approximation to $3$ numbers (Proof from \cite{M3d}, see subsection \ref{3nb} for details). 
Parameters $s$ and $t$ are needed in higher dimension. We exhibit a range of pairs of triples of consecutive best approximation vectors, denoted by an index, satisfying conditions of Lemma \ref{Keylemma}. Parameters $s$ and $t$ appear with values depending on dimension and the geometry of best approximation vectors that need to be optimize with respect to $g(s,t)$. To prove Theorem \ref{MainThm}, we show inductively that the optimized parameter $g(s,t)$ is root of the polynomial $R_{n,\alpha}$ defined by \eqref{poly} for $\alpha<\hat{\lambda}$ arbitrarily close to $\hat{\lambda}$.\\
 
\begin{proof}[Proof of Lemma \ref{Keylemma}] 
From \eqref{eqo} it follows that $s>w(s,t)$ and hence $g>0$. Now we use Lemma \ref{estim}. By \eqref{l2} together with \eqref{zz2}, $\langle  \pmb{z}_{\nu-1}, \pmb{z}_\nu, \pmb{z}_{\nu+1} \rangle_\bbZ \subset \mathfrak{S}_-$ and $\langle  \pmb{z}_{k-1}, \pmb{z}_k, \pmb{z}_{k+1} \rangle_\bbZ \subset \mathfrak{S}_+$, the estimate \eqref{zz1} can be rewritten as:

\[ (\xi_{\nu-1}\xi_\nu q_{\nu+1})^s  (\xi_{k-1}\xi_k q_{k+1})^t  \gg    (\xi_\nu q_{\nu+1})^{w (s,t)}(\xi_{k-1}q_k)^{1-w (s,t)}.\]
  This means that either
\[  (\xi_{\nu-1}\xi_\nu q_{\nu+1})^s  \gg   (\xi_\nu q_{\nu+1})^{w (s,t)}  \; \; \; \; \textrm{ [case (a)]  } \]
  or
 \[   (\xi_{k-1}\xi_k q_{k+1})^t   \gg   (\xi_{k-1}q_k)^{1-w (s,t)}  \; \;\; \; \textrm{ [case (b)]  } .\]
   
   Now we take into account \eqref{zz3}. In case (a), we use $s>w(s,t)$ to deduce that 
   \[1 \ll \xi_{\nu-1}^s(\xi_{\nu}q_{\nu+1})^{s-w(s,t)} \ll q_{\nu}^{-\alpha s}q_{\nu+1}^{(1-\alpha) (s-w(s,t))}\]
   and so $q_{\nu+1} \ll q_{\nu}^{g(s,t)}$. In the case (b) we use condition \eqref{positivity} to deduce  
   \begin{equation}\label{impoo}
   w(s,t)\ge 1-t.
   \end{equation} 
   Indeed,
   consider the function
    \[
   U_{s,t}(w) =
  w^2 -\left( s+1+\frac{\alpha}{1-\alpha} t\right) w + s,
   \]
  which is a polynomial  in $w$  of degree two.
  We see that
   \[
  U_{s,t}(0) = s>0,\,\,\,\,\,\,
  U_{s,t} (1) = -\frac{\alpha}{1-\alpha} t <0.
   \]
  Moreover by  \eqref{positivity}  we have
   \[
  U_{s,t}(1-t) =
  \frac{t}{1-\alpha} ( (1-\alpha)s +t-1)\ge 0
   \]
   and $w(s,t ) \in (0,1)$ is a root of equation $ U_{s,t} (w(s,t)) = 0$. So we get   \eqref{impoo}.
   Now by means of 
   \eqref{impoo}
  we get
   \[1 \ll \xi_{k-1}^{t+w(s,t)-1} \xi_k^t q_{k+1}^t q_k^{w(s,t)-1} \ll  q_k^{w(s,t)-1 - \alpha(t+ w(s,t)-1)}q_{k+1}^{t(1-\alpha)}\]
and  $q_{k+1} \ll q_{k}^{g(s,t)}$.
 \end{proof}
   
\subsection{About the values of  $g(s,t)$}




 This subsection is rather technical and deals with some properties of  $g(s,t)$.
First of all we should note that
 the value of $g=g(s,t)$  defined in Lemma 4 
 satisfies the relation
   \begin{eqnarray}\label{gay}
   g^2 -\left( \frac{\alpha}{1-\alpha} +\frac{1-s}{t}\right) g - \frac{s}{t}\frac{\alpha}{1-\alpha}=0.
      \end{eqnarray}
 Indeed,
  equation \eqref{gay} immediately  follows from \eqref{defg} and \eqref{eqo}.
  

 
Then we should point out that if either $s$ or $t$ is $1$, we can use \eqref{gay} to express back the value of the other parameter $s$ or $t$ in terms of the  value $g=g(s,t)$ defined in (\ref{defg}). Namely,
   we have the following equalities which are equivalent to  \eqref{defg} in the special cases 
   $s=1$ or $t=1$:
   \begin{eqnarray}
      s &=& \frac{g^2-\frac{\alpha}{1-\alpha} g - g}{\frac{\alpha}{1-\alpha} - g}, \; \; \textrm{ for }\;\;  g = g(s,1),\\ \label{guy1}
      t &=& \frac{\frac{\alpha}{1-\alpha} }{g(g-\frac{\alpha}{1-\alpha} )}, \; \; \textrm{ for }\; \;     g = g(1,t), \\  \label{guy2}
      s &=& \frac{g^2-\frac{\alpha}{1-\alpha} g - \frac{\alpha}{1-\alpha}}{g-\frac{\alpha}{1-\alpha} } =\frac{R_{3,\alpha}(g)}{R_{2,\alpha}(g)}, \; \;      \textrm{ for } \; \;   g = g(1-s,1),\\ \label{guy3}
      t &=& \frac{g^2-\frac{\alpha}{1-\alpha} g - \frac{\alpha}{1-\alpha}}{g(g-\frac{\alpha}{1-\alpha}) } =\frac{R_{3,\alpha}(g)}{gR_{2,\alpha}(g)}, \; \; \textrm{ for } \; \;     g = g(1,1-t). 
   \end{eqnarray}

\subsection{About polynomials $R_{n,\alpha}(x)$ and $R^*_{n,\alpha^*}(x)$ }\label{wwwww}

To continue with our exposition we need to establish some further properties of polynomials  $R_{n,\alpha}(x)$, $R^*_{n,\alpha^*}(x)$ defined in \eqref{poly} and \eqref{polybis}.

 \begin{prop}
  The polynomials $R_{n,\alpha}(x)$ and $R^*_{n,\alpha^*}(x)$    can be defined inductively for all $n\geq 2$ {in the following way}:
    
  \begin{equation}\label{aaa}
  \begin{cases}
  R_{2,\alpha}(x) = x - \frac{\alpha}{1-\alpha}  \\R_{n+1,\alpha}(x)= x R_{n,\alpha}(x) - \frac{\alpha}{1-\alpha} 
  \end{cases}, \quad
  \begin{cases}
  R^*_{2,\alpha^*}(x) = x + (1-\alpha^*)  \\ R^*_{n+1,\alpha^*}(x) = R^*_{n,\alpha^*}(x) + x^n
  \end{cases}.
  \end{equation}
  \end{prop}
  
  The result of the proposition above follows from easy calculations.
  
  \vskip+0.3cm Recall that  by 
 $G(n,\alpha)$  we have denoted the unique real positive root of $R_{n,\alpha}(x)$ and by $G^*(n,\alpha^*)$ the unique positive root of $R^*_{n,\alpha^*}(x)$.  
     
   \begin{prop}\label{p-decrG}
 {The values $G(n,\alpha)$ and $G^*(n,\alpha^*)$ are decreasing functions in $n$.}
 \end{prop}
 
 \begin{proof}
 Since $G^*(n,\alpha^*)$ is the unique positive root of $R^*_{n,\alpha^*}$, and $R^*_{n,\alpha^*}(x)\to_{x\to\infty} \infty$, it follows from $R^*_{n+1,\alpha^*}(G^*(n,\alpha^*))= G^*(n,\alpha^*)^n >0 $. The proof is analogous for $G(n,\alpha)$.
 \end{proof}

    \begin{prop} 
 Suppose that $g = G(n,\alpha)$  is the positive root of the polynomial $R_{n,\alpha}(x)$. Then
  \begin{equation} \label{trivial-bounds-g}
      \frac{\alpha}{1-\alpha} \le g \le \frac{1}{1-\alpha}
      \end{equation}  
and 
  \begin{equation}\label{1pos-special-case}
 g((1-\alpha)g-\alpha)\le 1
 \end{equation}

  \end{prop}

  \begin{proof}
  Inequalities  (\ref{trivial-bounds-g}) follow from
   \[
 R_{n,\alpha}\left(\frac{\alpha}{1-\alpha}\right) < 0 < R_{n,\alpha}\left(\frac{1}{1-\alpha}\right).
 \]
  Calculations show that
 \begin{equation} 
 (1-\alpha)\cdot 1 +
 \left( 1-\frac{R_{3,\alpha}(g)}{gR_{2,\alpha}(g)}\right) -1 \ge 0
 \end{equation}
and this is equivalent to  (\ref{1pos-special-case}).
To see this one should take into account 
 the right inequality from  (\ref{trivial-bounds-g}).

  \end{proof}
 
 The following proposition give an analog to  the  inequality  (\ref{trivial-bounds-g}) for the dual case.
 Its proof is quite similar.
  \begin{prop} 
 Suppose that $g^* = G(n,\alpha^*)$  is the positive root of the polynomial $R_{n,\alpha^*}^*(x)$. Then
  \begin{equation} \label{positivitydual}
      \frac{\alpha^*-1}{\alpha^*}<1 \le g^*\le {\alpha^*-1}.
      \end{equation}  
        \end{prop}
 
      \bigskip

\subsection{Schmdt's inequality on heights}

The proof of Theorem \ref{MainThm}  essentially relies on Lemma 4 as well as  on Schmidt's inequality on height (see \cite{SchLN}, in fact this inequality was already used in the last section in \cite{M3d}). It provides the setting to apply Lemma \ref{Keylemma}  for different parameters $(s,t)$ to be determined later.

\begin{prop}[Schmidt's inequality]\label{SchmidtHeight}
Let $A,B$ be two rational subspaces in $\bbR^n$, we have
\begin{equation}\label{SH}
H(A + B) \cdot H(A\cap B) \ll H(A)\cdot H(B).
\end{equation}
where the height $H(A)$ is the fundamental volume of the lattice of integer points $\det(\mathfrak{A})= \det( A \cap \bbZ^n)$.
\end{prop}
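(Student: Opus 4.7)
The plan is to express each height as the Euclidean norm of a wedge product of a primitive basis and reduce the desired inequality to Hadamard's inequality on decomposable multivectors; this will yield the statement even with implied constant $1$.

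To begin, set $a=\dim A$, $b=\dim B$, $d=\dim(A\cap B)$, and write $\mathfrak{C}=(A\cap B)\cap\bbZ^n$, $\mathfrak{A}=A\cap\bbZ^n$, $\mathfrak{B}=B\cap\bbZ^n$. By the elementary divisor theorem one can pick a basis $e_1,\dots,e_d$ of $\mathfrak{C}$ and extend it both to a basis $e_1,\dots,e_d,f_1,\dots,f_{a-d}$ of $\mathfrak{A}$ and to a basis $e_1,\dots,e_d,g_1,\dots,g_{b-d}$ of $\mathfrak{B}$. Set $E=e_1\wedge\cdots\wedge e_d$, $F=f_1\wedge\cdots\wedge f_{a-d}$, $G=g_1\wedge\cdots\wedge g_{b-d}$. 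The standard identification between lattice covolume and Euclidean wedge norm then gives
\[
H(A\cap B)=\|E\|,\qquad H(A)=\|E\wedge F\|,\qquad H(B)=\|E\wedge G\|,
\]
while the combined family $\{e_i,f_j,g_k\}$ is a basis of the sublattice $\mathfrak{A}+\mathfrak{B}\subseteq (A+B)\cap\bbZ^n$, so $\|E\wedge F\wedge G\|=\det(\mathfrak{A}+\mathfrak{B})\ge H(A+B)$.

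Next I would pass to an orthogonal picture. Replacing each $f_i$ by its orthogonal projection $\tilde f_i$ onto the orthogonal complement of $A\cap B$ amounts to subtracting an $\bbR$-linear combination of the $e_j$, and so does not change any wedge containing $E$ as a factor. Writing $\tilde F=\tilde f_1\wedge\cdots\wedge\tilde f_{a-d}$ and defining $\tilde G$ similarly, one has $E\wedge F=E\wedge\tilde F$, $E\wedge G=E\wedge\tilde G$ and $E\wedge F\wedge G=E\wedge\tilde F\wedge\tilde G$. Because $\tilde F$ and $\tilde G$ lie in the orthogonal complement of the span of $E$, the exact factorisations
\[
\|E\wedge\tilde F\|=\|E\|\,\|\tilde F\|,\quad \|E\wedge\tilde G\|=\|E\|\,\|\tilde G\|,\quad \|E\wedge\tilde F\wedge\tilde G\|=\|E\|\,\|\tilde F\wedge\tilde G\|
\]
hold. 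Plugging these in, the target inequality reduces to the Hadamard-type bound $\|\tilde F\wedge\tilde G\|\le\|\tilde F\|\,\|\tilde G\|$, which chained with the identities above yields
\[
H(A\cap B)\cdot H(A+B)\le \|E\|\cdot\|E\wedge F\wedge G\|=\|E\|^2\|\tilde F\wedge\tilde G\|\le\|E\wedge F\|\cdot\|E\wedge G\|=H(A)\cdot H(B).
\]

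The main obstacle is this last Hadamard-type step: $\tilde F$ and $\tilde G$ are decomposable forms of possibly high grade whose underlying subspaces (the orthogonal complements of $A\cap B$ inside $A$ and inside $B$ respectively) need not be mutually orthogonal in $A+B$, so the inequality is strict in general and does not come from a trivial factorisation. One must instead invoke the general fact that $\|\alpha\wedge\beta\|\le\|\alpha\|\,\|\beta\|$ for any two decomposable multivectors, which follows from a Gram determinant computation together with a Cauchy--Binet / Fischer-type block inequality.
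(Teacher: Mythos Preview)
Your argument is correct and in fact yields the inequality with implied constant $1$. The paper does not prove Proposition~\ref{SchmidtHeight} directly (it cites \cite{SchLN}), but it does prove the $\Lambda$-generalised version, Proposition~\ref{SchmidtHeightGen}, by essentially the same scheme you use: extend a basis $\mathcal{E}$ of the intersection lattice to bases $(\mathcal{E},\mu_1)$ and $(\mathcal{E},\mu_2)$ of the two lattices, note that $(\mathcal{E},\mu_1,\mu_2)$ spans only a sublattice of $(A+B)\cap\Lambda$, and then factor the volume of $(\mathcal{E},\mu_1,\mu_2)$ through a projection. The paper's projection step is written tersely (the sentence defining $\mu_1^{**}$ is ambiguous and the displayed equality should really be an inequality), whereas you make the same step explicit and rigorous via the orthogonal projection off $A\cap B$ and the Fischer/Hadamard block inequality $\|\tilde F\wedge\tilde G\|\le\|\tilde F\|\,\|\tilde G\|$ on Gram determinants. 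So the route is the same; your exterior-algebra formulation is cleaner and pins down the constant.
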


\section{Examples: simultaneous approximation to three and four numbers.}\label{ex34}

In this section, we describe in details the proofs in the cases of simultaneous approximation to three and four numbers. 

An example for approximation by one linear form will be presented in Section \ref{4nbdual}.

\subsection{Simultaneous approximation to three numbers}\label{3nb}

Consider $\btheta\in\bbR^3$ with $\bbQ$-linearly independent coordinates with $1$. Consider a sequence $(\bz_l)_{l\in\bbN}$ of best approximations vectors to $\btheta$. Recall that as we consider simultaneous approximation, the sequence $(\bz_l)_{l\in\bbN}$ spans the whole space $\bbR^4$. 
\begin{lem}\label{Lem3nb}
For arbitrarily large indices $k_0$, there exists indices $ k>\nu > k_0$ and  triples of consecutive best approximation vectors
\[ \mathcal{S}_- :=  \{ \pmb{z}_{\nu-1}, \pmb{z}_\nu, \pmb{z}_{\nu+1} \} \; \; \text{and} \; \; \mathcal{S}_+ := \{ \pmb{z}_{k-1}, \pmb{z}_k, \pmb{z}_{k+1} \} \]
consisting of linearly independent vectors such that
  \begin{equation}
\mathfrak{S}_- \cap \mathfrak{S}_+  = : \Lambda = \langle  \pmb{z}_\nu, \pmb{z}_{\nu+1}\rangle_\mathbb{Z} =  \langle  \pmb{z}_{k-1}, \pmb{z}_{k} \rangle_\mathbb{Z}
 \textrm{ and } \; \langle\mathfrak{S}_- \cup \mathfrak{S}_+\rangle_\bbR = \bbR^4,
 \end{equation}
 where
 \[ \mathfrak{S}_- := \langle  \pmb{z}_{\nu-1}, \pmb{z}_\nu, \pmb{z}_{\nu+1} \rangle_\mathbb{R}\cap \mathbb{Z}^n
 \; \; \textrm{ and } \; \;
   \mathfrak{S}_+ := \langle  \pmb{z}_{k-1}, \pmb{z}_k, \pmb{z}_{k+1} \rangle_\mathbb{R}\cap \mathbb{Z}^n.\]
\end{lem}
This was proved in \cite{M3d}. \\

Denote by $\mathcal{S}_4$ the pattern of best approximation vectors described in Lemma \ref{Lem3nb} (see Figure \ref{fig3nb} ). Lemma \ref{Lem3nb} ensures that the pattern
$\mathcal{S}_4$ suites the first conditions to apply Lemma \ref{Keylemma} for arbitrarily large indices.\\

Here we chose $k_0$ sufficiently large for \eqref{zz3} to hold. Schmidt's inequality \eqref{SH} provides \eqref{zz1} with parameters $s=t=1$. Inequality \eqref{positivity} is obvious.\\

Lemma \ref{Keylemma} provides that for any $\alpha < \hat{\lambda}(\btheta)$, 
\[q_{l+1} \gg q_l^{g_\alpha} \]
for $l=\nu$ or $k$, where $g_\alpha$ is solution of the equation \eqref{gay} with $s=t =1$. Namely
\[g_\alpha^2-\frac{\alpha}{1-\alpha}g_\alpha-\frac{\alpha}{1-\alpha} = R_{3,\alpha}(g_{\alpha}) = 0,\] 
which provides 
\[g_\alpha = \cfrac{\alpha + \sqrt{ 4\alpha - 3\alpha^2}}{2(1-\alpha)}.\]
Hence for every $\alpha < \lambda(\btheta)$, we have
\[\cfrac{{\lambda}(\btheta)}{\hat{\lambda}(\btheta)} \geq g_\alpha =  \cfrac{\alpha + \sqrt{ 4\alpha - 3\alpha^2}}{2(1-\alpha)}.\]
We deduce the lower bound \eqref{Mo3d}.\\

We now explain how to obtain the pattern of best approximation vectors in Lemma \ref{Lem3nb}. It is the basic step for a more general construction in higher dimension.

\begin{proof}[Proof of Lemma \ref{Lem3nb}]
Figure \ref{fig3nb} may be usefull to understand the construction.\\
Consider $(\bz_l)_{l\in\bbN}$ a sequence of best approximation vectors to $\btheta\in \bbR^{3}$, and an arbitrarily large index $k_0$. Since $(\bz_l)_{l\geq k_0}$ spans a $4$-dimensional subspace, we can define $k$ to be the smallest index such that
\[ \dim\langle \bz_{k_0}, \bz_{k_0+1}, \ldots, \bz_k, \bz_{k+1} \rangle_\bbR =4.\]
Note that by minimality, $\bz_{k+1}$ is not in the $3$-dimensional subspace spanned by $\left(\bz_l\right)_{k_0 \leq l \leq k}$. In particular, since two consecutive best approximation vectors are linearly independent the three consecutive best approximation vectors $\bz_{k-1}, \bz_{k}, \bz_{k+1}$ are linearly independent. Set $\nu > k_0$ to be the largest index such that
\[ \dim\langle \bz_{\nu-1}, \bz_\nu, \ldots , \bz_k, \bz_{k+1}\rangle_\bbR = 4.\]
Note that by maximality, $\bz_{\nu-1}$ is not in the $3$-dimensional subspace spanned by $\left(\bz_l\right)_{\nu \leq l \leq k +1}$. In particular, since two consecutive best approximation vectors are linearly independent the three consecutive best approximation vectors $\bz_{\nu-1}, \bz_{\nu}, \bz_{\nu+1}$ are linearly independent. Moreover, combining both observations we deduce that the lattice
\[\Lambda := \langle \bz_\nu, \bz_{\nu+1}, \ldots , \bz_{k-1}, \bz_k \rangle_\bbR \cap \bbZ^4 = \langle  \pmb{z}_\nu, \pmb{z}_{\nu+1} \rangle_\mathbb{Z} =  \langle  \pmb{z}_{k-1}, \pmb{z}_{k} \rangle_\mathbb{Z}\] is $2$-dimensional, and is spanned by two consecutive best approximation vectors (see Lemma \ref{lambdaspans}). Hence, the considered indices $\nu$ and $k$ provide $6$ best approximation vectors satisfying Lemma \ref{Lem3nb}.\\
\end{proof}

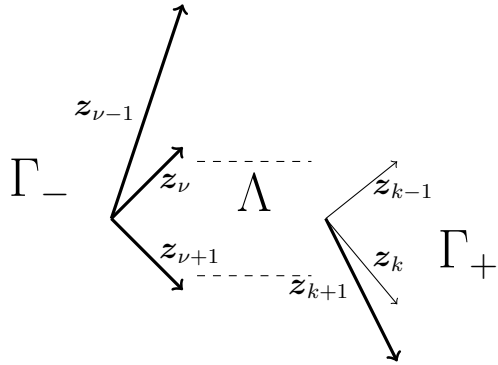
\begin{figure}[!h] 
 \begin{center}
\begin{tikzpicture}[scale=0.95]
\draw[very thick, ->] (0,0)--(1,3) node[midway,left] {$\bz_{\nu-1}$};
\draw[very thick, ->] (0,0)--(1,1) node[midway,right] {$\bz_{\nu}$};
\draw[very thick, ->] (0,0)--(1,-1) node[midway,right] {$\bz_{\nu+1}$};

\draw (-1,0.5) node {\LARGE{$\Gamma_-$}};
\draw (5,-0.5) node {\LARGE{$\Gamma_+$}};

\draw[dashed] (1.2,0.8)--(2.8,0.8) node[midway,below] {\LARGE{$\Lambda$}};
\draw[dashed] (1.2,-0.8)--(2.8,-0.8);

\draw[thin, ->] (3,0)--(4,0.8) node[midway,right] {$\bz_{k-1}$};
\draw[thin, ->] (3,0)--(4,-1.2) node[midway,right] {$\bz_{k}$};
\draw[very thick, ->] (3,0)--(4,-2) ;
\draw (3.1,-1.4) node {$\bz_{k+1}$};

 \end{tikzpicture}
 \end{center}
 \caption{All best approximation vectors with index between $\nu$ and $k$ lie in the $2$-dimensional lattice $\Lambda$. The four bold vectors are linearly independent and span the whole space.}\label{fig3nb}
 \end{figure}

\subsection{Simultaneous approximation to four numbers}\label{4nb}

In the case of simultaneous approximation to four numbers, we select a pattern $\mathcal{S}_5$ of best approximation vectors that combines two patterns $\mathcal{S}_4$ coming from Lemma \ref{Lem3nb}. This is the first step of the induction for arbitrary dimension, where we combine two patterns of lower dimension. Thus, it is an enlightening example. Note that in this simple case, a proper choice of parameters was made in \cite[equalities after formula (13) from the case $\mathfrak{i}(\Theta)=1$]{GM}. \\

Consider $\btheta\in\bbR^4$ with $\bbQ$-linearly independent coordinates with $1$. Consider $(\bz_l)_{l\in\bbN}$ a sequence of best approximation vectors to $\btheta$. 

\begin{lem}\label{lem4nb}
Let $k_0$ be an arbitrarily large index. There exists indices $ k_0 < r_0 < r_1 \leq r_2 < r_3$ such that the following holds.
\begin{enumerate}
\item{ The triples of consecutive best approximation vectors}
\[ \mathcal{S}_{r_i} := \{ \bz_{r_i-1}, \bz_{r_i}, \bz_{r_i+1} \}, \; \; \; 0\leq i \leq 3 \]
consist of linearly independent vectors spanning a $3$-dimensional subspace $\mathbf{S}_{3,i} := \langle \mathcal{S}_{r_i} \rangle_\bbR$.
\item{The two triples of consecutive best approximation vectors} $\mathcal{S}_{r_1}$ and $\mathcal{S}_{r_2}$ generate the same rational subspace 
\[ \mathbf{Q} := \mathbf{S}_{3,1} = \mathbf{S}_{3,2}. \]
\item{The pairs of consecutive best approximation vectors $\bz_{r_0}, \bz_{r_0+1}$ and $\bz_{r_1-1}, \bz_{r_1}$ span the same $2$-dimensional lattice }
\[\Lambda_0 : = \langle \bz_{r_0}, \bz_{r_0+1} \rangle_{\bbZ} = \langle \bz_{r_1-1}, \bz_{r_1} \rangle_{\bbZ} =\mathbf{S}_{3,0} \cap \mathbf{S}_{3,1} \cap \bbZ^{5}.\]
\item{The pairs of consecutive best approximation vectors $\bz_{r_2}, \bz_{r_2+1}$ and $\bz_{r_3-1}, \bz_{r_3}$ span the same $2$-dimensional lattice }
\[\Lambda_1 := \langle \bz_{r_2}, \bz_{r_2+1} \rangle_{\bbZ} = \langle \bz_{r_3-1}, \bz_{r_3}\rangle_{\bbZ}= \mathbf{S}_{3,2}\cap \mathbf{S}_{3,3} \cap \bbZ^5.\]
\item{  Both quadruples of best approximation $\{ \bz_{r_0-1}, \bz_{r_0}, \bz_{r_0+1}, \bz_{r_1+1} \}$ and $\{ \bz_{r_2-1}, \bz_{r_3-1}, \bz_{r_3},\bz_{r_3+1} \}$ consist of linearly independent vectors.}
\item{The whole space $\bbR^5$ is spanned by $\bz_{r_0-1}, \bz_{r_0}, \bz_{r_0+1}, \bz_{r_1+1}, \bz_{r_3+1}$ that is}
\[ \langle \bz_{r_0-1}, \bz_{r_0}, \bz_{r_0+1}, \bz_{r_1+1}, \bz_{r_3+1}\rangle_\bbR = \langle \mathbf{S}_{3,0} \cup \mathbf{Q} \cup \mathbf{S}_{3,2} \rangle_\bbR = \bbR^5.\]
\end{enumerate}
\end{lem}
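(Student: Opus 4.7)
The plan is to mirror the strategy behind Lemma \ref{Lem3nb} but nest two instances of it: an outer one in the ambient space $\bbR^5$ and an inner one inside a suitable $3$-dimensional rational subspace $\mathbf{Q}$. The outer bracketing will produce $r_0,r_3$ and trap every intermediate best approximation vector in $\mathbf{Q}$; the inner bracketing, carried out inside $\mathbf{Q}$ on those intermediate vectors, will produce $r_1,r_2$ and the two $2$-dim planes $\mathbf{\Lambda}_0,\mathbf{\Lambda}_1$ carrying $\Lambda_0,\Lambda_1$.

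For the outer step, take $k_0$ large enough that $(\bz_l)_{l\ge k_0}$ still spans $\bbR^5$. Let $r_3$ be the smallest index with $\dim\langle\bz_{k_0},\ldots,\bz_{r_3+1}\rangle_\bbR=5$ and $r_0$ the largest index with $\dim\langle\bz_{r_0-1},\ldots,\bz_{r_3+1}\rangle_\bbR=5$. Setting $\mathbf{V}_4:=\langle\bz_{k_0},\ldots,\bz_{r_3}\rangle_\bbR$ and $\mathbf{V}_4':=\langle\bz_{r_0},\ldots,\bz_{r_3+1}\rangle_\bbR$, both are $4$-dimensional with $\mathbf{V}_4+\mathbf{V}_4'=\bbR^5$, so $\mathbf{Q}:=\mathbf{V}_4\cap\mathbf{V}_4'$ is $3$-dimensional. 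It contains every $\bz_l$ with $r_0\le l\le r_3$, while $\bz_{r_0-1}\notin\mathbf{V}_4'\supseteq\mathbf{Q}$ and $\bz_{r_3+1}\notin\mathbf{V}_4\supseteq\mathbf{Q}$. Hence the triples $\mathcal{S}_{r_0}$ and $\mathcal{S}_{r_3}$ are linearly independent, their spans $\mathbf{S}_{3,0},\mathbf{S}_{3,3}$ are $3$-dimensional, and each meets $\mathbf{Q}$ in a $2$-dim plane $\mathbf{\Lambda}_0:=\langle\bz_{r_0},\bz_{r_0+1}\rangle_\bbR$ and $\mathbf{\Lambda}_1:=\langle\bz_{r_3-1},\bz_{r_3}\rangle_\bbR$ respectively.

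For the inner step, define $r_1$ as the largest index with $\bz_{r_0},\ldots,\bz_{r_1}\in\mathbf{\Lambda}_0$ and $r_2$ as the smallest index with $\bz_{r_2},\ldots,\bz_{r_3}\in\mathbf{\Lambda}_1$. Since the vectors $\bz_{r_0},\ldots,\bz_{r_3}$ collectively span the $3$-dimensional $\mathbf{Q}$, at least one intermediate vector must escape each $2$-dim plane, so $r_0<r_1<r_3$ and $r_0<r_2<r_3$. For $r_1\le r_2$: if $r_1>r_2$ and $\mathbf{\Lambda}_0\ne\mathbf{\Lambda}_1$, the consecutive pair $\bz_{r_2},\bz_{r_2+1}$ would lie in the $1$-dimensional $\mathbf{\Lambda}_0\cap\mathbf{\Lambda}_1$, contradicting Lemma \ref{lambdaspans}; if $\mathbf{\Lambda}_0=\mathbf{\Lambda}_1$, any index $l$ with $\bz_l\notin\mathbf{\Lambda}_0$ (which must exist) forces $r_1<l<r_2$. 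By construction $\bz_{r_1+1}\in\mathbf{Q}\setminus\mathbf{\Lambda}_0$ and $\bz_{r_2-1}\in\mathbf{Q}\setminus\mathbf{\Lambda}_1$, so each of $\mathcal{S}_{r_1},\mathcal{S}_{r_2}$ is linearly independent and spans all of $\mathbf{Q}$, giving $\mathbf{S}_{3,1}=\mathbf{S}_{3,2}=\mathbf{Q}$.

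It remains to verify the lattice and spanning conditions. The identities $\Lambda_0=\langle\bz_{r_0},\bz_{r_0+1}\rangle_\bbZ=\langle\bz_{r_1-1},\bz_{r_1}\rangle_\bbZ=\mathbf{S}_{3,0}\cap\mathbf{S}_{3,1}\cap\bbZ^5$ follow from Lemma \ref{lambdaspans}, since both consecutive pairs span $\mathbf{\Lambda}_0=\mathbf{S}_{3,0}\cap\mathbf{Q}$; symmetrically for $\Lambda_1$. Each quadruple is independent because, e.g., $\bz_{r_0},\bz_{r_0+1},\bz_{r_1+1}$ span $\mathbf{Q}$ (as $\bz_{r_1+1}\notin\mathbf{\Lambda}_0$) and $\bz_{r_0-1}\notin\mathbf{Q}$ adds a fourth dimension. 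Finally, the five vectors $\bz_{r_0-1},\bz_{r_0},\bz_{r_0+1},\bz_{r_1+1},\bz_{r_3+1}$ span $\bbR^5$: the first four span the $4$-dim subspace $\mathbf{S}_{3,0}+\mathbf{Q}\subseteq\mathbf{V}_4$, and $\bz_{r_3+1}\notin\mathbf{V}_4$ by the very choice of $r_3$. The main delicate point in the whole argument is the verification $r_1\le r_2$ together with the degenerate case $\mathbf{\Lambda}_0=\mathbf{\Lambda}_1$; everything else reduces to the same dimension counting used in the proof of Lemma \ref{Lem3nb}.
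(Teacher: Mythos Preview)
Your proof is correct and follows essentially the same two-step nesting as the paper: first choose $r_3$ minimal and $r_0$ maximal to trap all intermediate best approximations in a $3$-dimensional $\mathbf{Q}$, then repeat inside. Your inner-step definitions (largest $r_1$ with $\bz_{r_0},\ldots,\bz_{r_1}\in\mathbf{\Lambda}_0$, smallest $r_2$ with $\bz_{r_2},\ldots,\bz_{r_3}\in\mathbf{\Lambda}_1$) are equivalent to the paper's (smallest $r_1$ with $\langle\bz_{r_0-1},\ldots,\bz_{r_1+1}\rangle_\bbR=\mathbf{S}_{4,0}$, largest $r_2$ with $\langle\bz_{r_2-1},\ldots,\bz_{r_3+1}\rangle_\bbR=\mathbf{S}_{4,1}$), since for $r_0\le l\le r_3$ one has $\bz_l\in\mathbf{S}_{3,0}\iff\bz_l\in\mathbf{S}_{3,0}\cap\mathbf{Q}=\mathbf{\Lambda}_0$. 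Your explicit case analysis for $r_1\le r_2$ (splitting on whether $\mathbf{\Lambda}_0=\mathbf{\Lambda}_1$) fills in a point the paper leaves tacit.
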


We discuss the meaning of the lemma, and apply it to the proof of the main result for simultaneous approximation to four numbers. The proof is postponed at the end of the section.\\

The $5$-dimensional pattern described in Lemma \ref{lem4nb} is denoted by 
\[\mathcal{S}_5  \quad : \quad   \mathcal{S}_{3,0} \underset{\Lambda_0}{-}\mathcal{S}_{3,1} \equiv \mathcal{S}_{3,2} \underset{\Lambda_1}{-}\mathcal{S}_{3,3}\]
 till the end of the section. Note that it consists of two $4$-dimensional patterns 
\[\mathcal{S}_{4,0}  \quad : \quad \mathcal{S}_{3,0} \underset{ \Lambda_0 }{-} \mathcal{S}_{3,1}\]
given by indices $\nu=r_0$ and $k=r_1$ in Lemma \ref{Lem3nb} and 
\[ \mathcal{S}_{4,1}  \quad : \quad \mathcal{S}_{3,2} \underset{ \Lambda_1 }{-}  \mathcal{S}_{3,3}\]
given by indices $\nu=r_2$ and $k=r_3$ in Lemma \ref{Lem3nb}. These two $4$-dimensional patterns $\mathcal{S}_{4,0}$ and $\mathcal{S}_{4,1}$ intersect on the $3$-dimensional subspace $\mathbf{Q}$.  Thus,
 \[\mathcal{S}_5  \quad : \quad \mathcal{S}_{4,0} \underset{\mathbf{Q}}{-} \mathcal{S}_{4,1}.\]

 \begin{figure}[!h] 
 \begin{center}
\begin{tikzpicture}[level/.style={sibling distance=50mm/#1, level distance=15mm}]
\node [circle,draw] {${\mathbf{S}}_{5}$}
  child {node [circle,draw] (a) {${\mathbf{S}}_{4,0}$} 
    child {node [circle,draw,black,solid]  (c) {${\mathbf{S}}_{3,0}$} edge from parent  [thin,dotted,black] node[black,sloped] {$\subset$}
    }
    child {node [circle,draw,black,solid] (d) {${\mathbf{S}}_{3,1}$} edge from parent  [thin,dotted,black] node[black,sloped] {$\supset$}
       }
  edge from parent [thin,dotted,black] node[black,sloped] {$\subset$}}
  child {node [circle,draw,black,solid] (b) {${\mathbf{S}}_{4,1}$}
    child {node [circle,draw,black,solid] (e) {${\mathbf{S}}_{3,2}$} edge from parent  [thin,dotted,black] node[black,sloped] {$\subset$}} 
  child {node [circle,draw,black,solid] (f) {${\mathbf{S}}_{3,3}$}
 edge from parent [thin,dotted,black] node[black,sloped] {$\supset$}    }
   edge from parent [thin,dotted,black] node[black,sloped] {$\supset$} };
   \path (d) -- (e) node [midway, sloped] { $=$};
    \path (d) -- (e) node [midway, above] { $\mathbf{Q}$};

 \end{tikzpicture}
 \end{center}
 \caption{Binary tree sketching the situation described in Lemma \ref{lem4nb}.}\label{fignnb}
 \end{figure}

 For the pattern $\mathcal{S}_5$, Schmidt's inequality \eqref{SH} provides 
\[ \det \mathfrak{S}_{3,0} \det \mathfrak{Q} \det \mathfrak{S}_{3,3} \gg \det \Lambda_0 \det \Lambda_1\]
  where $\mathfrak{S}_{i,j}= \mathbf{S}_{i,j} \cap \bbZ^5$ and $\mathfrak{Q}=\mathfrak{S}_{3,1}=\mathfrak{S}_{3,2}$. It can be rewritten  as
 \begin{equation}\label{sh4} \frac{  {\rm det}\, \mathfrak{S}_{3,0} \,  ({\rm det}\, \mathfrak{S}_{3,1})^x}{  {\rm det}\, \Lambda_0}  \cdot   \frac{  ({\rm det}\, \mathfrak{S}_{3,2})^{1-x} \,  {\rm det}\, \mathfrak{S}_{3,3}}{  {\rm det}\, \Lambda_2} \gg 1\end{equation}
 with arbitrary $ x \in (0,1)$. This means that
 \[ \text{either}\,\,\, \frac{  {\rm det}\, \mathfrak{S}_{3,0}\,  ({\rm det}\, \mathfrak{S}_{3,1})^x}{ {\rm det}\, \Lambda_0}\gg 1  \,\,\, \text{or}\,\,\,    \frac{  ({\rm det}\, \mathfrak{S}_{3,2})^{1-x} \,  {\rm det}\, \mathfrak{S}_{3,3}}{
  {\rm det}\, \Lambda_2} \gg 1.\]
  
Hence conditions \eqref{zz2}, \eqref{zz1} and \eqref{zz3} are satisfied either for $(\mathfrak{S}_{3,0}, \mathfrak{S}_{3,1})$ and $(s,t) = (1,x)$ or for $(\mathfrak{S}_{3,2},\mathfrak{S}_{3,3})$ and $(s,t) = (1 - x,1)$.
  For $g$ satisfying the equation $R_{4,\alpha}(g) = gR_{3,\alpha}(g) -\frac{\alpha}{1-\alpha} = 0$, we set 
\[ x = \frac{\frac{\alpha}{1-\alpha}}{g(g-\frac{\alpha}{1-\alpha})} =\frac{R_{3,\alpha}(g)}{g-\frac{\alpha}{1-\alpha}}.\]

From \eqref{guy1}, \eqref{guy2}, we deduce that 
 \begin{equation}\label{pppooo}
 g = G(4,\alpha) = g(1,x) =  g( 1-x,1). 
 \end{equation}
 We should mention that as now $g =G(4,\alpha)$
 is the root of equation $R_{4,\alpha}(x)=0$,
  we have (\ref{trivial-bounds-g}). Hence
  for parameters $(s, t) = (1, x) $ and $(s, t) = (1 - x, 1)$, the positivity condition (\ref{positivity})
 follows 
 from \eqref{trivial-bounds-g}. The first part of Theorem \ref{MainThm} for simultaneous approximation to four numbers follows from Lemma \ref{Keylemma}.\qed
  
 \bigskip
  
  Here, there is one parameter $x$ to optimize. In higher dimension, we have many more, and need to compute the optimal values of that parameters inductively.\\
 
 \begin{proof}[Proof of Lemma \ref{lem4nb}]
Figure \ref{fig4nb} may be usefull to understand the construction.\\
Consider a sequence $(\bz_l)_{l\in\bbN}$ of best approximation vectors to $\btheta\in \bbR^{4}$, and an arbitrarily large index $k_0$. Set $r_3$ to be the smallest index such that
\[ \dim\langle \bz_{k_0}, \bz_{k_0+1}, \ldots, \bz_{r_3}, \bz_{r_3+1} \rangle_\bbR =5.\]
Note that by minimality, $\bz_{r_3+1}$ is not in the $4$-dimensional subspace spanned by $\left(\bz_l\right)_{k_0 \leq l \leq r_3}$. In particular, since two consecutive best approximation vectors are linearly independent the three consecutive best approximation vectors $\bz_{r_3-1}, \bz_{r_3}, \bz_{r_3+1}$ are linearly independent and span a $3$-dimensional lattice denoted by $\Gamma_3$. Set $r_0> k_0$ to be the largest index such that 
\[ \dim\langle \bz_{r_0-1}, \bz_{r_0}, \ldots, \bz_{r_3}, \bz_{r_3+1} \rangle_\bbR =5.\]
Note that by maximality, $\bz_{r_0-1}$ is not in the $4$-dimensional subspace spanned by $\left(\bz_l\right)_{r_0 \leq l \leq r_3 +1}$. In particular, since two consecutive best approximation vectors are linearly independent the three consecutive best approximation vectors $\bz_{r_0-1}, \bz_{r_0}, \bz_{r_0+1}$ are linearly independent and span a $3$-dimensional lattice denoted by $\Gamma_0$. Moreover, combining both observations we deduce that
\[ \mathbf{Q} : = \langle \bz_{r_0}, \bz_{r_0+1}, \ldots , \bz_{r_3-1}, \bz_{r_3} \rangle_\bbR \]
is a $3$-dimensional rational subspace.\\
Now appears the \textbf{induction step}: we apply the same procedure in lower dimension to the two $4$-dimensional subspaces 
\[ \mathbf{S}_{4,0} := \langle \bz_{r_0-1}, \bz_{r_0}, \ldots , \bz_{r_3-1}, \bz_{r_3}\rangle_\bbR \; \; \textrm{ and } \; \;  \mathbf{S}_{4,1} :=   \langle \bz_{r_0}, \bz_{r_0+1}, \ldots , \bz_{r_3}, \bz_{r_3+1}\rangle_\bbR .\]
Note that it gives a proof of  Lemma \ref{Lem3nb}.\\
Set $r_1$ to be the smallest index such that 
\[\langle \bz_{r_0-1}, \bz_{r_0}, \ldots , \bz_{r_1}, \bz_{r_1+1}\rangle_\bbR = \mathbf{S}_{4,0}.\]
Note that by minimality, $\bz_{r_1+1}$ is not in the $3$-dimensional subspace $\mathbf{S}_{3,0}$ spanned by $\left(\bz_l\right)_{r_0-1 \leq l \leq r_1}$. In particular, since two consecutive best approximation vectors are linearly independent the three consecutive best approximation vectors $\bz_{r_1-1}, \bz_{r_1}, \bz_{r_1+1}$ are linearly independent and span a $3$-dimensional lattice $\Gamma_1$ included in $\mathbf{Q} = \mathbf{S}_{3,1}$. By construction, $r_0$ is already the largest index such that 
\[\langle  \bz_{r_0-1}, \bz_{r_0}, \ldots , \bz_{r_1-1}, \bz_{r_1}\rangle_\bbR = \mathbf{S}_{4,0}.\]
Hence, $\langle \bz_{r_0}, \bz_{r_0+1}, \ldots , \bz_{r_1-1}, \bz_{r_1}\rangle_\bbZ= : \Lambda_0$ is a $2$-dimensional lattice spanned by either $\langle \bz_{r_0}, \bz_{r_0+1} \rangle_{\bbZ}$ or $\langle \bz_{r_1-1}, \bz_{r_1} \rangle_{\bbZ}$, and is the intersection $\mathbf{S}_{3,0} \cap \mathbf{S}_{3,1} \cap \bbZ^5$ (see Lemma \ref{lambdaspans}). \\
Set $r_2$ to be the largest index such that 
\[ \langle \bz_{r_2-1}, \bz_{r_2}, \ldots , \bz_{r_3}, \bz_{r_3+1} \rangle_\bbR = \mathbf{S}_{4,1}.\]
Note that $\bz_{r_2-1}$ is not in the $3$-dimensional subspace $\mathbf{S}_{3,3}$ spanned by $\left(\bz_l\right)_{r_2 \leq l \leq r_3+1}$. In particular, since two consecutive best approximation vectors are linearly independent the three consecutive best approximation vectors $\bz_{r_2-1}, \bz_{r_2}, \bz_{r_2+1}$ are linearly independent and span a $3$-dimensional lattice $\Gamma_2$ included in $\mathbf{Q} = \mathbf{S}_{3,1}$. By construction, $r_3$ is already the smallest index such that 
\[\langle \bz_{r_2-1}, \bz_{r_2}, \ldots , \bz_{r_3}, \bz_{r_3+1}\rangle_\bbR = \mathbf{S}_{4,1}.\]
Hence, $\langle \bz_{r_2}, \bz_{r_2+1}, \ldots , \bz_{r_3-1}, \bz_{r_3}\rangle_\bbZ=: \Lambda_1$ is a $2$-dimensional lattice spanned by $\langle \bz_{r_2}, \bz_{r_2+1} \rangle_{\bbZ}$ or $\langle \bz_{r_3-1}, \bz_{r_3} \rangle_{\bbZ}$, and is the intersection $\mathbf{Q} \cap \mathbf{S}_{3,3} \cap \bbZ^5$ (see Lemma \ref{lambdaspans}).
\end{proof}
Note that we may have $r_1=r_2$. 
Lattices $\Gamma_1$ and $\Gamma_2$ may not coincide, but they are both sub-lattices of $\mathfrak{Q} = \mathbf{Q}\cap \bbZ^5$.\\

\begin{figure}[!h] 
 \begin{center}
\begin{tikzpicture}

\draw (-1, 1) node {$\LARGE{\Gamma_0}$};
\draw[very thick, ->] (0,0)--(1,3) node[midway,left] {$\bz_{r_0-1}$};
\draw[very thick, ->] (0,0)--(1,1) node[midway,right] {$\bz_{r_0}$};
\draw[very thick, ->] (0,0)--(1,-1) node[midway,right] {$\bz_{r_0+1}$};

\draw[dashed] (1.2,0.8)--(2.8,0.8) node[midway,below] {$\LARGE{\Lambda_0}$};
\draw[dashed] (1.2,-0.8)--(2.8,-0.8);

\draw[thin, ->] (3,0)--(4,0.8) node[midway,right] {$\bz_{r_1-1}$};
\draw[thin, ->] (3,0)--(4,-1.2) node[midway,right] {$\bz_{r_1}$};
\draw[very thick, ->] (3,0)--(4,-2) ;
\draw (3.1,-1.5) node {$\bz_{r_1+1}$};

\draw[dashed] (4.2,0.6)--(6.8,0.6);
\draw[dashed] (4.2,-0.6)--(6.8,-0.6) node [midway,above] {$\LARGE{\mathbf{Q} = \mathbf{S}_{3,1} =\mathbf{S}_{3,2}}$};
\draw[dashed] (4.2,-1.8)--(6.8,-1.8);

\draw[thin, ->] (7,0)--(8,0.8) node[midway,right] {$\bz_{r_2+1}$};
\draw[thin, ->] (7,0)--(8,-0.7) node[midway,right] {$\bz_{r_2}$};
\draw[thin, ->] (7,0)--(8,-2) ;
\draw (8.3,-1.2) node {$\bz_{r_2-1}$};

\draw[dashed] (8.2,0.6)--(9.8,0.6) node[midway,below] {$\LARGE{\Lambda_1}$};
\draw[dashed] (8.2,-0.5)--(9.8,-0.5);

\draw[very thick, ->] (10,0)--(11,3) node[midway,left] {$\bz_{r_3+1}$};
\draw[thin, ->] (10,0)--(11,1) node[midway,right] {$\bz_{r_3}$};
\draw[thin, ->] (10,0)--(11,-1) node[midway,right] {$\bz_{r_3-1}$};

\draw (12,1) node {$\LARGE{\Gamma_3}$};
\draw (7.4,1) node {$\LARGE{\Gamma_2}$};
\draw (3.4,1) node {$\LARGE{\Gamma_1}$};

 \end{tikzpicture}
 \end{center}
 \caption{Selected sequence of best approximation vectors.}\label{fig4nb}
 \end{figure}
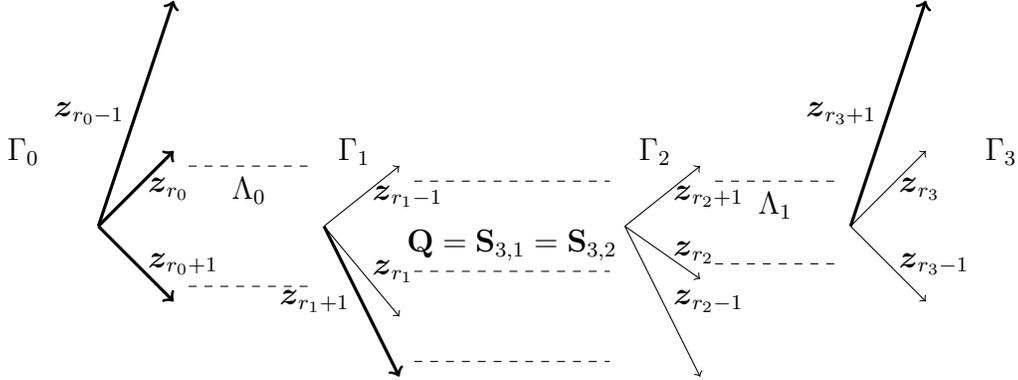
 
 In Figure \ref{fig4nb}, the dashed lines should be interpreted as follows. The best approximation vectors $(\bz_l)_{r_0 \leq l \leq r_1}$ generate the $2$-dimensional lattice $\Lambda_0$. The best approximation vectors $(\bz_l)_{r_2 \leq l \leq r_3}$ generate the $2$-dimensional lattice $\Lambda_1$. The best approximation vectors $(\bz_l)_{r_1-1 \leq l \leq r_2+1}$ generate the $3$-dimensional rational subspace $\mathbf{Q} = \mathbf{S}_{3,1} =\mathbf{S}_{3,2}$. The five bold vectors span the whole space $\bbR^5$.\\

\section{Arbitrary dimension} \label{anynb}

\subsection{Two lemmas}

Consider $\btheta\in\bbR^n$ with $\bbQ$-linearly independent coordinates with $1$. Consider $(\bz_l)_{l\in\bbN}$ a sequence of best approximation vectors to $\btheta$.

\begin{lem}\label{lemnnb}
Let $k_0$ be an arbitrarily large index. There exists $2^{n-2}$ indices $ k_0 < r_0 < r_1 , \ldots , r_{2^{n-2}-2} < r_{2^{n-2}-1} $ such that the following holds.

\begin{enumerate}
\item{ The triples of consecutive best approximation vectors}
\[ \mathcal{S}_{3,l} = \{ \bz_{r_l-1}, \bz_{r_l}, \bz_{r_l+1} \}, \; \; \; 0 \leq l \leq 2^{n-2}-1 \]
consist of linearly independent vectors spanning a $3$-dimensional rational subspace $S_{3,l}$.
\item{For $4 \leq k \leq n+1$} and $ 0\leq l \leq 2^{n-k+1} -1$ , denote by $\mathcal{S}_{k,l}$ the set of best approximation vectors
\[ \mathcal{S}_{k,l} = \cup_{\nu=0}^{2^{k-3}-1}  \mathcal{S}_{3,2^{k-3}l + \nu} . \]
$\mathcal{S}_{k,l}$ spans the $k$-dimensional rational subspace $\mathbf{S}_{k,l}$.
\item{The rational subspaces $\mathbf{S}_{k,l}$} satisfy the relations
\begin{eqnarray}
\mathbf{S}_{k,2l} \cup \mathbf{S}_{k, 2l+1} &=& \mathbf{S}_{k+1,l},\\ \label{defQ}
\mathbf{S}_{k,2l} \cap \mathbf{S}_{k,2l+1} &=& \mathbf{S}_{k-1,4l+1} = \mathbf{S}_{k-1,4l+2} = : \mathbf{Q}_{k-1, l}.
\end{eqnarray}
In particular, $\mathbf{Q}_{2,l}$ is spanned by both $\bz_{r_{2l}}, \bz_{r_{2l}+1} $ and $\bz_{r_{2l+1}-1}, \bz_{r_{2l+1}}$.

\item{The full space $\bbR^{n+1}$ is spanned by}
\[\langle \bz_{r_0-1}, \bz_{r_0}, \bz_{r_0+1}, \bz_{r_1+1}, \bz_{r_2+1}, \ldots , \bz_{r_{2^{n-3}-1}+1} \rangle_\bbR = \langle \cup_{l=0}^{2^{n-k+1}-1} \mathbf{S}_{k,l} \rangle_\bbR, \; \; 3\leq k \leq n +1.\]
In particular, $\mathbf{S}_{n+1,0}= \bbR^{n+1}$.
\end{enumerate}
\end{lem}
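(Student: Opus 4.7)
The plan is to prove Lemma \ref{lemnnb} by induction on $n$, with the binary tree of subspaces $\mathbf{S}_{k,l}$ built from the root $\mathbf{S}_{n+1,0} = \bbR^{n+1}$ downwards. The base cases $n=3$ (Lemma \ref{Lem3nb}) and $n=4$ (Lemma \ref{lem4nb}) are already established, and they display the two essential primitive moves that will be iterated: at the root, one selects boundary indices that trap a subsequence of best approximation vectors spanning the ambient space; at each interior node, the same construction is re-applied within each half.

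Concretely, given a large index $k_0$, I would first mimic the $n=4$ proof at the top level: let $r_{2^{n-2}-1}$ be the smallest index with $\dim\langle \bz_{k_0},\dots,\bz_{r_{2^{n-2}-1}+1}\rangle_\bbR = n+1$, then let $r_0$ be the largest index $>k_0$ with $\dim\langle \bz_{r_0-1},\dots,\bz_{r_{2^{n-2}-1}+1}\rangle_\bbR = n+1$. The minimality/maximality combined with the fact (Lemma \ref{lambdaspans}) that two consecutive best approximations are linearly independent forces the triples $\mathcal{S}_{3,0} = \{\bz_{r_0-1},\bz_{r_0},\bz_{r_0+1}\}$ and $\mathcal{S}_{3,2^{n-2}-1}$ to be linearly independent. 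Setting $\mathbf{S}_{n,0} := \langle \bz_{r_0-1},\dots,\bz_{r_{2^{n-2}-1}}\rangle_\bbR$ and $\mathbf{S}_{n,1}:=\langle \bz_{r_0},\dots,\bz_{r_{2^{n-2}-1}+1}\rangle_\bbR$ yields two $n$-dimensional rational subspaces with union $\bbR^{n+1}$ and intersection $\mathbf{Q}_{n-1,0}:=\langle \bz_{r_0},\dots,\bz_{r_{2^{n-2}-1}}\rangle_\bbR$ of dimension $n-1$. Then I apply the inductive hypothesis to the best approximation vectors lying inside each $\mathbf{S}_{n,l}$ (which, by construction, span it): within $\mathbf{S}_{n,0}$, declare $r_1$ the smallest index with $\langle \bz_{r_0-1},\dots,\bz_{r_1+1}\rangle_\bbR = \mathbf{S}_{n,0}$, and apply the inductive hypothesis in dimension $n-1$ to extract $r_2,\dots,r_{2^{n-3}-1}$; symmetrically, within $\mathbf{S}_{n,1}$, declare $r_{2^{n-2}-2}$ the largest suitable index and extract $r_{2^{n-3}},\dots,r_{2^{n-2}-3}$. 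This produces the whole ordered list $r_0 < r_1 \leq r_2 < \cdots \leq r_{2^{n-2}-2} < r_{2^{n-2}-1}$ (equalities allowed exactly at the middle of each sub-block, as already noted after Lemma \ref{lem4nb}), and with it every subspace $\mathbf{S}_{k,l}$.

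The spanning relations $\mathbf{S}_{k,2l}\cup\mathbf{S}_{k,2l+1}=\mathbf{S}_{k+1,l}$ and the top-level identity $\mathbf{S}_{n+1,0}=\bbR^{n+1}$ follow immediately from the recursive definition. The main obstacle, and what I would need to handle with care, is the intersection identity $\mathbf{S}_{k,2l}\cap\mathbf{S}_{k,2l+1}=\mathbf{S}_{k-1,4l+1}=\mathbf{S}_{k-1,4l+2}=\mathbf{Q}_{k-1,l}$: one must verify not only that the two candidate $(k-1)$-dimensional subspaces at the next level share an $(k-2)$-dimensional piece, but that they actually coincide, by virtue of the choices of the boundary indices at Step 1 applied inside $\mathbf{S}_{k+1,l}$. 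This follows because the maximality/minimality of the chosen boundary indices at the parent node forces the intermediate vectors $\bz_{r_j},\dots,\bz_{r_{j'}}$ (with $j<j'$ the middle pair produced inside the parent) to span exactly a $(k-1)$-dimensional subspace, which is then automatically $\mathbf{S}_{k-1,4l+1}=\mathbf{S}_{k-1,4l+2}$. Once this alignment is verified one level at a time by downward induction on $k$, properties (1)--(4) drop out, and the triples $\mathcal{S}_{3,l}$ are linearly independent for the same reason as in Lemma \ref{Lem3nb}, namely the extremality of the boundary indices combined with Lemma \ref{lambdaspans}.
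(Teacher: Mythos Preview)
Your approach is essentially the paper's: induction on the ambient dimension, at the top level pick the extreme indices $r_0$ and $r_{2^{n-2}-1}$ by maximality/minimality, then recurse into the two resulting $n$-dimensional subspaces $\mathbf{S}_{n,0}$ and $\mathbf{S}_{n,1}$, with the crucial observation that extremality forces the inner boundary indices to agree, hence $\mathbf{S}_{k-1,4l+1}=\mathbf{S}_{k-1,4l+2}$.

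One indexing slip to fix in your middle paragraph: inside $\mathbf{S}_{n,0}$, the ``smallest index $j$ with $\langle \bz_{r_0-1},\dots,\bz_{j+1}\rangle_\bbR=\mathbf{S}_{n,0}$'' is not $r_1$ but $r_{2^{n-3}-1}$, the last index of the left half-block; your labeling generalizes the $n=4$ picture incorrectly. Correspondingly, you should not ``apply the inductive hypothesis in dimension $n-1$'' after picking that index by hand. The cleaner and correct formulation (which is what the paper does) is to apply the inductive hypothesis for dimension $n$ directly to each of $\mathbf{S}_{n,0}$ and $\mathbf{S}_{n,1}$, with starting indices $k_0'=r_0-1$ and $k_0''=r_0$; this produces two full blocks of $2^{n-3}$ indices each, and the key observation is that the first index of the left block and the last index of the right block are forced (by the very definition of $r_0$ and $r_{2^{n-2}-1}$) to coincide with those already chosen at the top level. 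This is exactly the alignment you identify in your last paragraph, so the substance of your argument is right.
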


Here, the first index always denotes the dimension of the considered object. For a given dimension $k$, there are $2^{n-k+1}$ subspaces $\mathbf{S}_{k,l}$ and $2^{n-k-1}$ subspaces $\mathbf{Q}_{k,l}$ of dimension $k$.\\

Another important pattern of best approximation vectors which may be useful for the considered problem was already discovered for any dimension in 2013 by V. Nguyen in \cite[\S2.3]{Ng} while studying simultaneous approximation to the basis of an algebraic number field and an extra real number.\\

Lemma \ref{lemnnb} coincide with Lemma \ref{Lem3nb} for the approximation to three numbers and with Lemma \ref{lem4nb} for the approximation to four numbers. In the later case, we have $ \Lambda_j \sim \mathfrak{Q}_{2,j}$ for $0 \leq j \leq 1$.\\

We can partially describe the situation with the binary tree from Figure \ref{fignnb}, where each child is included in its parent. In particular, the parent of a given rational subspace $\mathbf{S}_{k,l}$ is $\mathbf{S}_{k+1,\sigma(l)}$ where $\sigma$ is the usual shift on the binary expansion. \\

\begin{figure}[!h] 
 \begin{center}
\begin{tikzpicture}[level/.style={sibling distance=100mm/#1, level distance=19mm}]
\node [circle,draw] {${\mathbf{S}}_{n+1,0}$}
  child {node [circle,draw] (a) {${\mathbf{S}}_{n,0}$} 
    child {node [circle,draw,black,solid]  (c) {${\mathbf{S}}_{n-1,0}$}
        child {node [circle,draw,black,solid] (g)  {${\mathbf{S}}_{n-2,0}$}
         child {node  {$\vdots$}edge from parent  [thin,dotted,black] node[black,sloped] {$\subset$}}
    child {node {$\vdots$}edge from parent  [thin,dotted,black] node[black,sloped] {$\supset$}
    } edge from parent  [thin,dotted,black] node[black,sloped] {$\subset$}
    }
        child {node [circle,draw,black,solid] (j) {${\mathbf{S}}_{n-2,1}$} edge from parent  [thin,dotted,black] node[black,sloped] {$\supset$}
    } edge from parent  [thin,dotted,black] node[black,sloped] {$\subset$}
    }
    child {node [circle,draw,black,solid] (d) {${\mathbf{S}}_{n-1,1}$} 
     child {node [circle,draw,black,solid] (k) {${\mathbf{S}}_{n-2,2}$} edge from parent [thin,dotted,black] node [black,sloped] {$\subset$}}
    child {node [circle,draw,black,solid] {${\mathbf{S}}_{n-2,3}$}
    edge from parent  [thin,dotted,black]node[black,sloped] {$\supset$}
    } edge from parent [thin,dotted,black] node[black,sloped] {$\supset$}
  }
  edge from parent [thin,dotted,black] node[black,sloped] {$\subset$}}
  child {node [circle,draw,black,solid] (b) {${\mathbf{S}}_{n,1}$}
    child {node [circle,draw,black,solid] (e) {${\mathbf{S}}_{n-1,2}$} 
     child {node  {$\vdots$} edge from parent [thin,dotted,black] node[black,sloped] {$\subset$}}
    child {node (h) {$\vdots$} edge from parent [thin,dotted,black] node[black,sloped] {$\supset$}
    }edge from parent  node[sloped] {$\subset$}
    }
  child {node [circle,draw,black,solid] (f) {${\mathbf{S}}_{n-1,3}$}
 edge from parent [thin,dotted,black] node[black,sloped] {$\supset$}    }
   edge from parent [thin,dotted,black] node[black,sloped] {$\supset$} };
\path (a) -- (b) node (u) [circle,midway,below] {$\mathbf{Q}_{n-1,0}$};
\path (c) -- (d) node (v) [circle,midway] {$\mathbf{Q}_{n-2,0}$};
\path (e) -- (f) node (w) [circle, midway] {$\mathbf{Q}_{n-2,1}$};
\path (d) -- (u) node [midway, sloped] { $=$};
\path (e) -- (u) node [midway, sloped] { $=$};
\path (j) -- (v) node [midway, sloped] { $=$};
\path (k) -- (v) node [midway, sloped] { $=$};
\path (h) -- (w) node [midway, sloped] { $=$};

 \end{tikzpicture}
 \end{center}
 \caption{Binary tree sketching the situation described in Lemma \ref{lemnnb}. 
 }\label{fignnb}
 \end{figure}

We may write the recursive step of the construction of patterns as follows:
\[ \mathcal{S}_{n+1,0}  \quad : \quad \mathcal{S}_{n,0}\underset{{\mathbf{Q}}_{n-1,0}}{-}\mathcal{S}_{n,1}\]
where ${\mathbf{Q}}_{n-1,0}$ is a $n-1$ dimensional subspace. For the rational subspaces $\mathbf{S}_{n,0}$, $\mathbf{S}_{n,1}$ and ${\mathbf{Q}}_{n-1,0}$ and their underlying lattices $\mathfrak{S}_{n,0}$, $\mathfrak{S}_{n,1}$ and ${\mathfrak{Q}}_{n-1,0}$, Schmidt's inequality \eqref{SH} provides 
\begin{equation}\label{shift}
\frac{\det \mathfrak{S}_{n,0} \cdot \det \mathfrak{S}_{n,1}}{ \det \mathfrak{Q}_{n-1,0}} \gg 1.
\end{equation}
This relation enables us to shift the optimization equation in the next dimension as obtained in the next lemma. 

\begin{lem}\label{optiformula}
Let $n\geq4$. Consider the pattern of best approximation vectors $\mathcal{S}_{n+1,0}$ and its sub-patterns given by Lemma \ref{lemnnb}. Here as before, $\mathfrak{S}_{k,l} = \mathbf{S}_{k,l}\cap\bbZ^{n+1}$ and $\mathfrak{Q}_{k,l} = \mathbf{Q}_{k,l}\cap\bbZ^{n+1}$ are the integer points lattices of the rational subspace $\mathbf{S}_{k,l}$ and $\mathbf{Q}_{k,l}$. Then
\begin{equation}\label{MainEq}
\prod_{l=0}^{2^{n-4} -1} \left(\cfrac{\det\left(  \mathfrak{S}_{3,4l}\right) \det\left(  \mathfrak{Q}_{3,l} \right)^{1-y_{n-4}} }{ \det\left(  \mathfrak{Q}_{2,2l} \right)}\right)^{w_{n-4,l}} 
\left(\cfrac{ \det\left(  \mathfrak{Q}_{3,l} \right) ^{1-z_{n-4}}\det\left(  \mathfrak{S}_{3,4l+3} \right) }{ \det\left(  \mathfrak{Q}_{2,2l+1} \right)}\right)^{w_{n-4,l}'}  \gg 1,
\end{equation}
where the parameters $w_{k,l}, w_{k,l}', y_k$ and $z_k$ are defined inductively as follows.\\

Parameters
$y_0,z_0\in(0,1)$ are arbitrary such that 
\begin{equation}\label{inityz}
0 = y_{0} + z_0 -1
\end{equation}
 and then
\begin{eqnarray}\label{recyz}
(y_{k+1},z_{k+1}) &=& F(y_k,z_k) = \left( \cfrac{y_k}{y_k +z_k - y_k z_k}, \cfrac{z_k}{y_k + z_k - y_k z_k} \right),\\
1&=& w_{ 0,0} = w_{0,0}',
\end{eqnarray}
\begin{equation}\label{recw} w_{k+1,2l}=w_{k,l}, \; \; w_{k+1,2l+1} = (1-z_k)w_{k,l}', \; \; w_{k+1,2l}' = (1-y_k)w_{k,l} \; \; \textrm{ and } \; \; w_{k+1, 2l+1}' = w_{k,l}'.\end{equation}

Furthermore, the parameters satisfy the relations
\begin{eqnarray} \label{sumexpl1}
\sum_{l=0}^{2^{n-4} -1} (2-y_{n-4}) w_{n-4,l} + (2-z_{n-4}){w_{n-4,l}'} &= n-1,\\  \label{sumexpl2}
\sum_{l=0}^{2^{n-4} -1} w_{n-4,l} +{w_{n-4,l}'} &=n-2.
\end{eqnarray}
\end{lem}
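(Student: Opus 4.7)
The plan is to prove the lemma by induction on $n \geq 4$ (equivalently on the depth $d := n-4 \geq 0$). The base case $n=4$ is exactly \eqref{sh4} from Section~\ref{4nb}: one multiplies the top Schmidt inequality $\det \mathfrak{S}_{4,0} \det \mathfrak{S}_{4,1} \gg \det \mathfrak{Q}_{3,0}$ (coming from $\det \mathfrak{S}_{5,0}=1$) with the two base splits of $\mathfrak{S}_{4,0}$ and $\mathfrak{S}_{4,1}$, each at exponent $1$, using the middle identification $\mathfrak{S}_{3,1} = \mathfrak{S}_{3,2} = \mathfrak{Q}_{3,0}$ so that the two $\det \mathfrak{S}_{4,\cdot}$ factors cancel; the result is exactly \eqref{MainEq} for $d=0$ after setting $w_{0,0} = w'_{0,0} = 1$ and choosing $y_0 + z_0 = 1$.

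For the inductive step I would assign to every Schmidt inequality available on the binary tree of Lemma~\ref{lemnnb} an explicit exponent and multiply them all together. At the base level $k=4$, the Schmidt inequality $\det \mathfrak{S}_{3,4l'} \det \mathfrak{Q}_{3,l'} \gg \det \mathfrak{S}_{4,2l'} \det \mathfrak{Q}_{2,2l'}$ receives exponent $w_{d,l'}$ and its companion $\det \mathfrak{Q}_{3,l'} \det \mathfrak{S}_{3,4l'+3} \gg \det \mathfrak{S}_{4,2l'+1} \det \mathfrak{Q}_{2,2l'+1}$ receives $w'_{d,l'}$. At every higher level $5 \leq k \leq n+1$ and every position $l = 2l' + r$ (with $r \in \{0,1\}$), the Schmidt inequality for the split $\mathfrak{S}_{k,l} = \mathfrak{S}_{k-1,2l} + \mathfrak{S}_{k-1,2l+1}$ receives the closed-form exponent
\[
\beta_{k, 2l' + r} = \begin{cases} w_{n-k,\, l'} & \text{if } r = 0, \\ w'_{n-k,\, l'} & \text{if } r = 1, \end{cases}
\]
with the convention $w_{-1, 0} = w'_{-1, 0} = 1$, which in particular makes the top Schmidt (at $k = n+1$) receive exponent $1$. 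One then verifies three independent cancellations.

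The first is the \emph{corner} cancellation: at every position $l \not\equiv 1,2 \pmod 4$ and every $k \in \{4, \ldots, n\}$, the total exponent $\beta_{k+1, \lfloor l/2 \rfloor} - \beta_{k, l}$ of $\det \mathfrak{S}_{k, l}$ vanishes directly from the recursions $w_{d+1, 2l} = w_{d, l}$ and $w'_{d+1, 2l+1} = w'_{d, l}$ in \eqref{recw}. The second is the \emph{middle} cancellation: at the identification $\mathfrak{S}_{k, 4j+1} = \mathfrak{S}_{k, 4j+2} = \mathfrak{Q}_{k, j}$ the two individual contributions combine, after applying \eqref{recw}, into $y_{n-k-1} w_{n-k-1, j} + z_{n-k-1} w'_{n-k-1, j}$, and the level-$(k+2)$ Schmidt exponent cancels this thanks to the key algebraic identity
\[
y_d\, w_{d, 2l+r} + z_d\, w'_{d, 2l+r} = \begin{cases} w_{d-1, l} & \text{if } r = 0, \\ w'_{d-1, l} & \text{if } r = 1, \end{cases}
\]
which follows from $(y_{d+1}, z_{d+1}) = F(y_d, z_d)$ via the elementary observation $y_d + z_d - y_{d-1} z_d = 1 = y_d + z_d - y_d z_{d-1}$. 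The third calculation concerns $\det \mathfrak{Q}_{3, l'}$: summing the positive contributions $+w_{d, l'}$ and $+w'_{d, l'}$ from the two base Schmidts at pair $l'$ with the negative contribution $-\beta_{5, l'}$ from the level-$5$ Schmidt, the same identity collapses the total exponent to $(1 - y_d) w_{d, l'} + (1 - z_d) w'_{d, l'}$, matching \eqref{MainEq}.

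Finally, the sum formulas \eqref{sumexpl1} and \eqref{sumexpl2} are equivalent, upon setting $W_d := \sum_l w_{d, l}$ and $W'_d := \sum_l w'_{d, l}$, to the two scalar identities $(1 - y_d) W_d + (1 - z_d) W'_d = 1$ and $y_d W_d + z_d W'_d = d + 1$, both of which fall out of a short induction on $d$ using \eqref{recw} and the same key identity. The main obstacle will be the multi-level bookkeeping — the corner/middle dichotomy at each of the $n-3$ levels of the tree must be kept coherent with the conjectured closed form of $\beta_{k, l}$ — but once the single algebraic identity above is isolated, both types of cancellation are controlled in parallel, and the remainder of the verification is essentially mechanical.
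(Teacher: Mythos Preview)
Your approach is correct and amounts to the same proof as the paper's, organized differently. The paper argues by a level-by-level induction: it proves the intermediate formula \eqref{formrec} for $k=1,\dots,n-3$, initializing with the three top Schmidt inequalities and at each step applying Schmidt to every surviving $\mathfrak{S}_{n-k,\cdot}$ to drop one level; the recursion $(y_{k+1},z_{k+1})=F(y_k,z_k)$ is \emph{derived} from the requirement that the two new interior exponents match. You instead unroll this induction: you write down the closed-form exponent $\beta_{k,l}$ for every Schmidt inequality on the tree at once, then verify the cancellations directly. Your key algebraic identity $y_{d}+z_{d}(1-y_{d-1})=1=y_{d}(1-z_{d-1})+z_{d}$ is precisely the content of the paper's matching step, and your treatment of the sum formulas via $W_d,W'_d$ is a cleaner repackaging of the paper's descending induction on \eqref{recsum1}--\eqref{recsum2}.

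One small boundary gap: at level $k=n$ the index $l=1$ satisfies $l\equiv 1\pmod 4$, so it is excluded from your ``corner'' class, yet there is no companion $l=2$ at that level and hence no ``middle'' identification either. The cancellation $\beta_{n+1,0}-\beta_{n,1}=w_{-1,0}-w'_{0,0}=1-1=0$ still holds, but not via either of the two mechanisms you isolate. Simply note that at the top two levels all $\beta$'s equal~$1$ (by your convention $w_{-1,0}=w'_{-1,0}=1$ and the initialization $w_{0,0}=w'_{0,0}=1$), so the cancellation there is immediate. With that one-line patch your argument is complete.
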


We prove Lemma \ref{lemnnb} in subsection \ref{pf7} and then Lemma \ref{optiformula} in subsection \ref{pf8}. We first finish the proof of Theorem \ref{MainThm} in the case of simultaneous approximation.\\


\subsection{Proof of Theorem \ref{MainThm} }\label{PfMthm}

Consider $\btheta\in\bbR^n$ with $\bbQ$-linearly independent coordinates with $1$, and take $\alpha<\hat{\lambda}(\btheta)$. Denote by $g$ the unique positive root of $R_{n,\alpha}$ defined by \eqref{poly}.\\

Choose $y_0$ and $z_0$ in the following way:
\begin{equation}
y_0= \frac{R_{n-1,\alpha}(g)}{gR_{n-2,\alpha}(g)}, z_0 =  \frac{R_{n-1,\alpha}(g)}{R_{n-2,\alpha}(g)}.
\end{equation}
Using \eqref{aaa}, one can check the condition
\[y_{0} + z_{0} -1 = \frac{R_{n-1,\alpha}(g) +gR_{n-1,\alpha}(g) -gR_{n-2,\alpha}(g)}{gR_{n-2,\alpha}(g)} = \frac{R_{n,\alpha}(g)}{gR_{n-2,\alpha}(g)}=0.\]
By the induction formula \eqref{recyz}, we deduce that for every $4\leq k \leq n$
\begin{equation}\label{toprove}
y_{n-k} = \frac{R_{k-1,\alpha}(g)}{gR_{k-2,\alpha}(g)} \; \; \textrm{ and }\; \; z_{n-k} = \frac{R_{k-1,\alpha}(g)}{R_{k-2,\alpha}(g)}.\end{equation}
Indeed, the formula  (\ref{toprove}) is satisfied for $n-k=0$. Suppose that it is valid for a certain value of $k$. Then 
$\frac{z_{n-k}}{y_{n-k}} = g$ and recursive formula  (\ref{recyz}) gives us
 $\frac{z_{n-k-1}}{y_{n-k-1}} = g$. It is enough for verifying (\ref{recyz})  with $k$ replaced by $k+1$ by means of the first
 group of equalities from
 \eqref{aaa}.


In particular, we have 
\[ y_{n-4} = \frac{R_{3,\alpha}(g)}{gR_{2,\alpha}(g)} \; \; \textrm{ and }\; \; z_{n-4} = \frac{R_{3,\alpha}(g)}{R_{2,\alpha}(g)}.\]
We consider $g(s,t)$ defined in \eqref{defg} (Lemma \ref{Keylemma}) for the parameters
\begin{equation}\label{st1}
s =1, \; t= 1-y_{n-4}
\end{equation}
and
\begin{equation}\label{st2}
s=1-z_{n-4}, \;t=1.
\end{equation}

From \eqref{guy2} and \eqref{guy3}, it follows that

\begin{equation}\label{equalg}
g = G(n,\alpha) = g(1,1-z_{n-4}) = g(1-y_{n-4},1).\end{equation}


Recall that now $ g = G(n,\alpha)$ is the root of the polynomial $ R_{n,\alpha}(x)$.
 So the positivity condition \eqref{positivity}   for parameters (\ref{st1}) follows from \eqref{1pos-special-case}.
 At the same time for parameters 
 (\ref{st2})
 the positivity condition \eqref{positivity}  is clearly true. \\

According to Lemma \ref{optiformula}, we have \eqref{MainEq} and therefore there exists an index $0 \leq l \leq 2^{n-4}-1$ such that either
\[ \cfrac{\det\left(  \mathfrak{S}_{3,4l}\right) \det\left(  \mathfrak{Q}_{3,l} \right)^{1-y_{n-4}} }{ \det\left(  \mathfrak{Q}_{2,2l} \right)}  \gg 1 \; \; \textrm{ or } \; \;  \cfrac{ \det\left(  \mathfrak{Q}_{3,l} \right) ^{1-z_{n-4}}\det\left(  \mathfrak{S}_{3,4l+3} \right) }{ \det\left(  \mathfrak{Q}_{2,2l+1} \right)}  \gg 1.\]

To summarize, all the conditions are met to apply Lemma \ref{Keylemma} for either 
\[\mathfrak{S}_- = \mathfrak{S}_{3,4l}, \; \mathfrak{S}_+ =\mathfrak{Q}_{3,l}, \;   s =1, \; t= 1-y_{n-4}\]
or
\[\mathfrak{S}_- =  \mathfrak{Q}_{3,l}, \;\mathfrak{S}_+ = \mathfrak{S}_{3,4l+3}, \;  s=1-z_{n-4}, \;t=1.\]
Hence, there exists $\nu$ with $q_{\nu+1}\gg q_\nu^{g}$ and \eqref{equivthm} is met, proving Theorem \ref{MainThm}. \qed

\subsection{Proof of Lemma \ref{lemnnb}}\label{pf7}

Figure \ref{fignnb} may be useful to understand the construction.\\
Let $k_0\gg1$. We prove the lemma by induction in the dimension $n$. Suppose that we can construct a pattern $\mathcal{S}_{m,0}$ of $2^{m-3}$ triples of consecutive best approximation vectors given by indices $ k_0 < r_0 < r_1 , \ldots , r_{2^{m-3}-2} < r_{2^{m-3}-1} $ spanning a $m$-dimensional rational space. Such a construction for $m=4,5$ holds via Lemmas \ref{Lem3nb} and \ref{lem4nb}. This provides the base of induction.\\

Consider $(\bz_l)_{l\in\bbN}$ a sequence of best approximation vectors spanning a $(m+1)$-dimensional rational space $\mathbf{S}_{m+1}$.
Set $r_{2^{m-2}-1}$ to be the smallest index such that 
\[ \langle \bz_{k_0}, \bz_{k_0+1}, \ldots , \bz_{r_{2^{m-2}-1}}, \bz_{r_{2^{m-2}-1}+1}\rangle_\bbR = \mathbf{S}_{m+1}.\]
Note that $\bz_{r_{2^{m-2}-1}+1}$ is not in the $m$-dimensional subspace spanned by $\left(\bz_l\right)_{k_0 \leq l \leq r_{2^{m-2}}}$. In particular, since two consecutive best approximation vectors are linearly independent the three consecutive best approximation vectors $\bz_{r_{2^{m-2}-1}-1}, \bz_{r_{2^{m-2}-1}}, \bz_{r_{2^{m-2}-1}+1}$ are linearly independent and span a $3$-dimensional subspace denoted by $\mathbf{S}_{3, 2^{m-2}-1}$. Set $r_0 > k_0$ to be the largest index such that 
\[ \langle \bz_{r_0-1}, \bz_{r_0}, \ldots , \bz_{r_{2^{m-2}-1}}, \bz_{r_{2^{m-2}-1}+1} \rangle_\bbR = \mathbf{S}_{m+1}.\] 
Note that $\bz_{r_0-1}$ is not in the $m$-dimensional subspace spanned by $\left(\bz_l\right)_{r_0 \leq l \leq r_{2^{m-1}-1} +1}$. In particular, since two consecutive best approximation vectors are linearly independent the three consecutive best approximation vectors $\bz_{r_0-1}, \bz_{r_0}, \bz_{r_0+1}$ are linearly independent and span a $3$-dimensional subspace denoted by $\mathbf{S}_{3,0}$. Moreover, combining both observations we get that 
\[ \mathbf{Q}_{m-1,0} : = \langle \bz_{r_0}, \bz_{r_0+1}, \ldots , \bz_{r_{2^{m-2}-1}-1}, \bz_{r_{2^{m-2}-1}}\rangle_\bbR \] 
is a $m-1$-dimensional subspace.\\
We use the induction hypothesis for the two $m$-dimensional subspaces 
\[ \mathbf{S}_{m}' :=\langle \bz_{r_0-1}, \bz_{r_0}, \ldots , \bz_{r_{2^{m-2}-1}-1}, \bz_{r_{2^{m-2}-1}}\rangle_\bbR \; \; \textrm{ and } \; \;  \mathbf{S}_{m}'' :=   \langle \bz_{r_0}, \bz_{r_0+1}, \ldots , \bz_{r_{2^{m-2}-1}}, \bz_{r_{2^{m-2}-1}+1} \rangle_\bbR \]
for $k_0'=r_0-1$ and $k_0''=r_0$ respectively. This provides two patterns $\mathcal{S}_{m}'$ and $\mathcal{S}_{m}''$ of triples of best approximation vectors defined by indices $r_0 \leq r_0' < r_1' , \ldots , r_{2^{m-3}-2}' < r_{2^{m-3}-1}'$ and $r_0+1 \leq r_0'' < r_1'', \ldots , r_{2^{m-3}-2}'' < r_{2^{m-3}-1}''$ satisfying the conditions of Lemma \ref{lemnnb}. A key observation is that by definition of $r_0$, we necessarily have $r_0'=r_0$. Similarly, by definition of $r_{2^{m-2}-1}$, we necessarily have $r_{2^{m-2}-1} = r_{2^{m-3}-1}''$. It follows that both sub-patterns $\mathcal{S}_{m-1,1}'$ and $\mathcal{S}_{m-1,0}''$ span the rational subspace $\mathbf{Q}_{m-1,0}$. Hence, the pattern $\mathcal{S}$ defined by the triples given by indices 
\[ r_i = r_i' \; \; \textrm{ and } \; \; r_{i + 2^{m-3}} = r_i'' \; \; \textrm{ for } 0\leq i \leq 2^{m-3}-1 \]
combining the two sub-patterns $\mathcal{S}_{m}'$ and $\mathcal{S}_{m}''$ satisfies the required properties at the rank $m+1$. \[\mathcal{S}  \quad : \quad \mathcal{S}_{m}' \underset{\mathbf{Q}_{m-1,0}}{-} \mathcal{S}_{m}''.\] 

Since  $(\bz_l)_{l\in\bbN}$ a sequence of best simultaneous approximation vectors to $\btheta\in\bbR^n$ spans the whole space $\bbR^{n+1}$, Lemma \ref{lemnnb} follows.\qed

\begin{rmq}\label{rmk}
Note that the proof provides a $m$-dimensional pattern for $\btheta\in\bbR^n$ where $m$ is the dimension of the space spanned by its best approximation vectors. Furthermore, note that this construction holds for both simultaneous approximation and approximation by one linear form.\\
\end{rmq}

\subsection{Proof of Lemma \ref{optiformula}}\label{pf8}
By induction on $k$ we prove a more general formula
\begin{equation}\label{formrec}
\begin{split}
\prod_{l=0}^{2^{k-1} -1} \left(\cfrac{\det\left( \mathfrak{S}_{n-k,4l}\right) \det\left(  \mathfrak{S}_{n-k,4l+1} \right)^{1-y_{k-1}} }{ \det\left(  \mathfrak{S}_{n-k-1,8l+1} \right)}\right)^{w_{k-1,l}}  \times \quad \quad & \\
 \quad \prod_{l=0}^{2^{k-1} -1} \left(\cfrac{ \det\left( \mathfrak{S}_{n-k,4l+2} \right) ^{1-z_{k-1}}\det\left( \mathfrak{S}_{n-k,4l+3} \right) }{ \det\left(  \mathfrak{S}_{n-k-1,8l+3} \right)}\right)^{w_{k-1,l}'}  & \gg 1.
\end{split}
\end{equation}
 
 If we write it in terms of $\mathfrak{Q}_{i,j} = \mathfrak{S}_{i,4j+1} = \mathfrak{S}_{i,4j+2}$, we have

\begin{equation}
\begin{split}
\prod_{l=0}^{2^{k-1} -1} \left(\cfrac{\det\left( \mathfrak{S}_{n-k,4l}\right) \det\left(  \mathfrak{Q}_{n-k,l} \right)^{1-y_{k-1}} }{ \det\left(  \mathfrak{Q}_{n-k-1,2l} \right)}\right)^{w_{k-1,l}}  \times \quad \quad & \\
 \quad \prod_{l=0}^{2^{k-1} -1} \left(\cfrac{ \det\left( \mathfrak{Q}_{n-k,l} \right) ^{1-z_{k-1}}\det\left( \mathfrak{S}_{n-k,4l+3} \right) }{ \det\left(  \mathfrak{Q}_{n-k-1,2l+1} \right)}\right)^{w_{k-1,l}'}  & \gg 1.
\end{split}
\end{equation}

Lemma \ref{optiformula} is the latter formula for $k=n-3$. \\

We call factors of the first product, of the form \[\left(\cfrac{\det\left( \mathfrak{S}_{n-k,4l}\right) \det\left(  \mathfrak{Q}_{n-k,l} \right)^{1-y_{k-1}} }{ \det\left(  \mathfrak{Q}_{n-k-1,2l} \right)}\right)^{w_{k-1,l}} \] factors of \emph{Type I}, and factors of the second product of the form \[\left(\cfrac{ \det\left( \mathfrak{Q}_{n-k,l} \right) ^{1-z_{k-1}}\det\left( \mathfrak{S}_{n-k,4l+3} \right) }{ \det\left(  \mathfrak{Q}_{n-k-1,2l+1} \right)}\right)^{w_{k-1,l}'} \] factors of \emph{Type II}.\\

\paragraph{Base of induction} follows the steps of approximation to four numbers. Namely, Schmidt's inequality \eqref{SH} provides
\begin{equation}
\begin{cases}
\det (\mathfrak{S}_{n,0}) \det (\mathfrak{S}_{n,1}) &\gg \det( \mathfrak{Q}_{n-1,0} )\det (\mathfrak{S}_{n+1,0}) \; ,\\
\det (\mathfrak{S}_{n-1,0}) \det (\mathfrak{S}_{n-1,1}) &\gg \det( \mathfrak{Q}_{n-2,0}) \det( \mathfrak{S}_{n,0}) \; , \\
\det (\mathfrak{S}_{n-1,2}) \det (\mathfrak{S}_{n-1,3}) &\gg \det (\mathfrak{Q}_{n-2,1} )\det (\mathfrak{S}_{n,1}) \; .
\end{cases}
\end{equation}

Since $\mathcal{S}_{n+1,0}$ spans the whole space $\bbR^{n+1}$, we have $\det \mathfrak{S}_{n+1,0} =1$ and using the fact that $\det \mathfrak{Q}_{n-1,0}= \det \mathfrak{S}_{n-1,1}= \det \mathfrak{S}_{n-1,2}$ (by \eqref{defQ} ), we get the formula
\[ \cfrac{\det( \mathfrak{S}_{n-1,0}) \det( \mathfrak{Q}_{n-1,0}) \det( \mathfrak{S}_{n-1,3})}{\det( \mathfrak{Q}_{n-2,0}) \det( \mathfrak{Q}_{n-2,1})} \gg 1. \]
Setting $w_{0,0}=w_{0,0}'=1$ and $y_0$ and $z_0$ such that $y_0+z_0-1=0$, we can rewrite
\[ \left(\cfrac{\det( \mathfrak{S}_{n-1,0}) \det( \mathfrak{Q}_{n-1,0})^{1-y_0}}{\det (\mathfrak{Q}_{n-2,0})}\right)^{w_{0,0}} \left(\cfrac{ \det( \mathfrak{Q}_{n-1,0})^{1-z_0} \det( \mathfrak{S}_{n-1,3})}{ \det (\mathfrak{Q}_{n-2,1})}\right)^{w_{0,0}'} \gg 1. \]
This establishes the expected formula for $k=1$. In the inductive step, Schmidt's inequality \eqref{SH} splits each term of the product in two terms involving rational subspaces of lower dimension, and shift the values of the parameters $y_k$ and $z_k$.\\

Indeed, for $3\leq i \leq n+1 $ and $0 \leq j \leq 2^{n+1-i} -1$, Schmidt's inequality provides

\begin{equation}\label{shgen}
\cfrac{\det( \mathfrak{S}_{i-1,2j}) \det( \mathfrak{S}_{i-1,2j+1})}{\det ( \mathfrak{Q}_{i-2,j})} \gg \det (\mathfrak{S}_{i,j}).
\end{equation}

\begin{figure}[!h] 
 \begin{center}
\begin{tikzpicture}[level/.style={sibling distance=140mm/#1}]
\node  {$\vdots$}
child { node  (aa) {$\mathbf{S}_{i,j}$} edge from parent[draw=none]
  child {node  (a) {$\mathbf{S}_{i-1,2j}$}
    child {node (c) {$\mathbf{S}_{i-2,4j}$} edge from parent [thin,dotted,black] node[sloped] {$\subset$}}
    child {node  (d) {$\mathbf{S}_{i-2,4j+1}$} edge from parent [thin,dotted,black] node[sloped] {$\supset$}
    }  edge from parent [thin,dotted,black] node[sloped] {$\subset$}}
  child {node (b) {$\mathbf{S}_{i-1,2j+1}$}
    child {node  (e) {$\mathbf{S}_{i-2,4j+2}$} edge from parent [thin,dotted,black] node[sloped] {$\subset$}
    }
  child {node  (f) {$\mathbf{S}_{i-2,4j+3}$} edge from parent [thin,dotted,black] node[sloped] {$\supset$}
    }  edge from parent [thin,dotted,black] node[sloped] {$\supset$}}
    };
\path (a) -- (b) node (u) [midway] {$\mathbf{Q}_{i-2,j}$};
\path (d) -- (u) node [midway, sloped] { \Large{$=$}};
\path (e) -- (u) node [midway, sloped] { \Large{$=$}};

\path   (e) -- (f) node [midway] { \tiny{Type II} };
\path  (c) -- (d)  node [midway] { \tiny{Type I} };

 \end{tikzpicture}
 \end{center}
 \caption{Situation to apply Schmidt's inequality.}\label{fignnb2}
 \end{figure}

\paragraph{Inductive step.} Assume that \eqref{formrec} holds for some $1 \leq k < n-3$. In the product \eqref{formrec}, there are two types of factors: factors of Type I and of Type II. Each of these factors splits into two factors, one of Type I and one of Type II. We first deal with factors of Type I. For every $0\leq l \leq 2^{k-1}-1$,  we can apply Schmidt's inequality \eqref{shgen} with parameters $i=n-k$ and $j=4l$ and $j=4l+1$ respectively to split
\begin{eqnarray}\label{observation}
& & \left(\cfrac{\det\left(  \mathfrak{S}_{n-k,4l}\right) \det\left(  \mathfrak{S}_{n-k,4l+1} \right)^{1-y_{k}} }{ \det\left(  \mathfrak{Q}_{n-k-1,2l} \right)}\right)^{w_{k,l}}\\ \notag
  & & \; \ll \left(       \cfrac{\left(  \cfrac{ \det(\mathfrak{S}_{n-k-1,8l} ) \det( \mathfrak{S}_{n-k-1,8l+1})  }{\det(\mathfrak{Q}_{n-k-2,4l})}     \right) \left(     \cfrac{\det( \mathfrak{S}_{n-k-1,8l+2}) \det(\mathfrak{S}_{n-k-1,8l+3})}{\det(\mathfrak{Q}_{n-k-2,4l+1})}        \right)^{1-y_{k}} }{ \det\left( \mathfrak{Q}_{n-k-1,2l} \right)}        \right)^{w_{k,l}}.
\end{eqnarray}
Considering that $\mathbf{Q}_{n-k-1,2l} = \mathbf{S}_{n-k-1,8l+1} = \mathbf{S}_{n-k-1,8l+2}$, for any $u \in (0,1)$ we can write
\begin{equation}\label{part1}
\begin{split}
 \left(    \cfrac{ \det(\mathfrak{S}_{n-k-1,8l} ) \det( \mathfrak{S}_{n-k-1,8l+1})^{u(1-y_k)}  }{\det(\mathfrak{Q}_{n-k-2,4l})}  \right)^{w_{k,l}} \times \quad \quad &\\
\quad \quad  \left(   \cfrac{\det(\mathfrak{S}_{n-k-1,8l+2})^{1-u} \det(\mathfrak{S}_{n-k-1,8l+3})}{\det(\mathfrak{Q}_{n-k-2,4l+1})}     \right)^{(1-y_k)w_{k,l}} &\gg \eqref{observation}.
 \end{split}
\end{equation}

Similarly, for factors of Type II, for any $v \in (0,1)$, using \eqref{shgen} with $i=n-k$ and $j=4l+2$ and $j=4l+3$ respectively, and the fact that $\mathbf{Q}_{n-k-1,2l+1} = \mathbf{S}_{n-k-1,8l+5} = \mathbf{S}_{n-k-1,8l+6}$ we get

\begin{equation}\label{part2}\begin{split}
\left(\cfrac{ \det\left( \mathfrak{S}_{n-k,4l+2} \right) ^{1-z_{k-1}}\det\left( \mathfrak{S}_{n-k,4l+3} \right) }{ \det\left(  \mathfrak{Q}_{n-k-1,2l+1} \right)}\right)^{w_{k,l}'}  \ll &\left(    \cfrac{ \det(\mathfrak{S}_{n-k-1,8l+4} ) \det( \mathfrak{S}_{n-k-1,8l+5})^{1-v}  }{\det(\mathfrak{Q}_{n-k-2,4l+2})}  \right)^{(1-z_k)w_{k,l}'}  \\
&\quad \quad  \times \left(   \cfrac{\det(\mathfrak{S}_{n-k-1,8l+6})^{v(1-z_{k})} \det(\mathfrak{S}_{n-k-1,8l+7})}{\det(\mathfrak{Q}_{n-k-2,4l+3})}     \right)^{w_{k,l}'}.
\end{split}\end{equation}

Combining the splitting of Type I \eqref{part1} and  Type II \eqref{part2} factors in the induction hypothesis \eqref{formrec}, it appears that we should define the parameters $(y_{k+1},z_{k+1})$ to be solutions of the system in variables $(v,u)$
\[ u(1-y_k) = 1- v \; \; \textrm{ and } \; \; 1-u=v(1-z_k). \]
That is
\[ y_k = \cfrac{y_{k+1}+z_{k+1}-1}{z_{k+1}} \; \; \textrm{ and } z_k = \cfrac{y_{k+1}+z_{k+1}-1}{y_{k+1}} \]
or equivalently 
\[ y_{k+1} = \cfrac{y_k}{y_k +z_k - y_k z_k}  \; \; \textrm{ and } z_{k+1} = \cfrac{z_k}{y_k + z_k - y_k z_k}. \]
The last equality coincide with the definition $F(y,z)$ in \eqref{recyz} and \eqref{part2} gives formulae \eqref{recw} for $w_{k+1,l}$ and $w'_{k+1,l}$.\\

This and the parameters \eqref{recw} establish formula \eqref{formrec} for $k+1$.\\

We now prove the relation \eqref{sumexpl1} and \eqref{sumexpl2} by descending induction, showing that for any $4 \leq k \leq n$
\begin{eqnarray}\label{recsum1}
\sum_{l=0}^{2^{n-k} -1}\left( (2-y_{n-k}) w_{n-k,l} + (2-z_{n-k}){w_{n-k,l}'} \right) &= n-k+3,\\ \label{recsum2}
\sum_{l=0}^{2^{n-k} -1} \left( w_{n-k,l} +{w_{n-k,l}'} \right) &=n-k+2.
\end{eqnarray}

First, note that 
\[ w_{0,0} + w_{0,0}' = 2, \; \textrm{ and } \; w_{0,0}(2-y_0) + w_{0,0}'(2-z_0) = 3,\]
hence we have the base of induction at $k=n$. \\

Assume that for some $4 \leq k \leq n$ \eqref{recsum1} and \eqref{recsum2} holds. The two sums represent the degrees of determinants that appears respectively at the numerator and at the denominator in \eqref{MainEq}. The key is to observe the splitting in \eqref{observation} : the new sum for the denominator is the sum from the previous numerator, while at the numerator, the previous denominator is doubled but we have a cancellation with one denominator. Namely, using the recurrence formula \eqref{recyz} and \eqref{recw} for the parameters, we get

\begin{eqnarray*}
\sum_{l=0}^{2^{n-k+1} -1} \left( w_{n-k+1,l} +{w_{n-k+1,l}'} \right)&=& \sum_{l=0}^{2^{n-k} -1}\left( w_{n-k+1,2l} +w_{n-k+1,2l+1} +{w_{n-k+1,2l}'} +{w_{n-k+1,2l+1}'}\right) \\
 &=& \sum_{l=0}^{2^{n-k+1} -1}\left( w_{n-k,l} +w'_{n-k,l}(1-z_{n-k}) +{w_{n-k,l}'} +{w_{n-k,l}}(1-y_{n-k})\right)\\
 &=& \sum_{l=0}^{2^{n-k} -1}\left( (2-y_{n-k}) w_{n-k,l} + (2-z_{n-k}){w_{n-k,l}'} \right)= n-k+3,
\end{eqnarray*}

\begin{eqnarray*}
\sum_{l=0}^{2^{n-k+1} -1} \left( w_{n-k+1,l}(2-y_{n-k+1}) +{w_{n-k+1,l}'} (2-z_{n-k+1})\right) &=& \\
= \sum_{l=0}^{2^{n-k} -1} (w_{n-k+1,2l}+ w_{n-k+1,2l+1})(2-y_{n-k+1}) & +& \sum_{l=0}^{2^{n-k} -1} ({w_{n-k+1,2l}'} +{w_{n-k+1,2l+1}'})(2-z_{n-k+1}) \\
 = \sum_{l=0}^{2^{n-k} -1} \left( w_{n-k,l} (3- 2y_{n-k}) +w'_{n-k,l} (3-2z_{n-k})\right) &=& 2(n-k+3) - (n-k+2) = n-k+4.
\end{eqnarray*}

Hence the result by descending induction.\qed

\section{Approximation by one linear form}\label{lf}

In this section, we explain how the very same geometry of a sequence of best approximation vectors provides Theorem \ref{MainThm} for approximation by one linear form. We need to consider a hyperbolic rotation to get a suitable analogue of the estimates in Lemma \ref{estim}. For this, we use Schmidt's inequalities on heights in a slightly larger context than rational subspaces.

\subsection{About Schmidt's inequalities on heights}\label{lfl8}

As stated in Proposition \ref{SchmidtHeight}, Schmidt's inequality deals with  the intersections of rational subspaces with the lattice $\mathbb{Z}^d$ of integer points. Here we need to deal with a more general situation. Let $\Lambda \subset \bbR^d$ be a complete lattice, that plays the role of integer points. Let $\bf{M}\subset \bbR^d$ be a subspace, it is called $\Lambda$-rational if the lattice $\mathfrak{M}= \textbf{M} \cap \Lambda$
is complete, i.e. if $\langle \mathfrak{M} \rangle_{\bbR} = \textbf{M} $.

\begin{lem}
The intersection of two $\Lambda$-rational subspaces is $\Lambda$-rational.
\end{lem}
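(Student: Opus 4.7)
The plan is to reduce to the classical case $\Lambda=\bbZ^d$ and then invoke the characterization of $\bbZ^d$-rational subspaces as those admitting a $\bbQ$-basis. Since $\Lambda$ is a complete lattice, one can choose a $\bbZ$-basis $e_1,\ldots,e_d$ and define the linear isomorphism $\phi:\bbR^d\to\bbR^d$ sending each $e_i$ to the $i$-th standard basis vector. Then $\phi(\Lambda)=\bbZ^d$, and $\phi$ preserves inclusions, intersections, and the spanning relation $\langle\cdot\rangle_\bbR$; consequently $\mathbf{M}$ is $\Lambda$-rational if and only if $\phi(\mathbf{M})$ is $\bbZ^d$-rational. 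After this reduction I may assume $\Lambda=\bbZ^d$ throughout.

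Next I would record the standard identification: a subspace $\mathbf{M}\subset\bbR^d$ is $\bbZ^d$-rational if and only if $\mathbf{M}=\langle V\rangle_\bbR$ for the $\bbQ$-subspace $V:=\mathbf{M}\cap\bbQ^d$. One direction is trivial (the integer points sit inside $\bbQ^d$); the other follows by clearing denominators, since any rational vector in $\mathbf{M}$ has an integer multiple lying in $\mathbf{M}\cap\bbZ^d$. Given two $\bbZ^d$-rational subspaces $\mathbf{M}_1,\mathbf{M}_2$, set $V_i=\mathbf{M}_i\cap\bbQ^d$ and $W=V_1\cap V_2\subset\bbQ^d$, with $\mathbf{W}:=\langle W\rangle_\bbR$. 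The inclusion $\mathbf{W}\subset\mathbf{M}_1\cap\mathbf{M}_2$ is immediate, and the goal reduces to proving the reverse inclusion.

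For this I would run a dimension count. Pick a $\bbQ$-basis $v_1,\ldots,v_r$ of $W$, extend it to $\bbQ$-bases of $V_1$ and $V_2$, and observe that the resulting family $v_1,\ldots,v_r,w_1,\ldots,w_s,w'_1,\ldots,w'_t$ is $\bbQ$-linearly independent (by the Grassmann formula over $\bbQ$) and hence $\bbR$-linearly independent as well, because a $\bbQ$-basis of any $\bbQ$-subspace of $\bbQ^d$ remains $\bbR$-linearly independent in $\bbR^d$. This family spans $\mathbf{M}_1+\mathbf{M}_2$ over $\bbR$, so $\dim_\bbR(\mathbf{M}_1+\mathbf{M}_2)=r+s+t$. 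Combined with $\dim_\bbR\mathbf{M}_i=\dim_\bbQ V_i$ and the real Grassmann formula, this gives $\dim_\bbR(\mathbf{M}_1\cap\mathbf{M}_2)=r=\dim_\bbQ W=\dim_\bbR\mathbf{W}$. The inclusion $\mathbf{W}\subset\mathbf{M}_1\cap\mathbf{M}_2$ together with matching dimensions forces equality.

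Finally, $W$ is spanned over $\bbQ$ by vectors in $\bbQ^d$, and after multiplying by a common denominator these lie in $(\mathbf{M}_1\cap\mathbf{M}_2)\cap\bbZ^d$; they span $\mathbf{W}=\mathbf{M}_1\cap\mathbf{M}_2$ over $\bbR$, showing the intersection is $\bbZ^d$-rational, and transporting back through $\phi^{-1}$ gives the conclusion for general $\Lambda$. The only subtlety to verify carefully is the passage from $\bbQ$-linear to $\bbR$-linear independence inside $\bbR^d$, but this is elementary since any $\bbR$-dependence among rational vectors can be reduced to a $\bbQ$-dependence by a standard Gaussian elimination argument. I do not expect any genuine obstacle; the lemma is a formal consequence of the dictionary between $\Lambda$-rational subspaces and $\bbQ$-subspaces of $\bbQ^d$.
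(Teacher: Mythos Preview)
Your proof is correct. The reduction to $\Lambda=\bbZ^d$ via a linear isomorphism is legitimate, the identification of $\bbZ^d$-rational subspaces with real spans of $\bbQ$-subspaces is standard, and the Grassmann dimension count over $\bbQ$ transported to $\bbR$ by the observation that $\bbQ$-linearly independent rational vectors remain $\bbR$-linearly independent is sound.

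Your route, however, differs from the one the paper alludes to. The paper simply remarks that the proof ``is the same as for rational subspaces, and use the description of subspaces by their orthogonal vectors.'' That is the \emph{dual} description: after reducing to $\bbZ^d$, a subspace is $\bbZ^d$-rational precisely when it is the common zero set of finitely many linear forms with rational (equivalently integer) coefficients; the intersection of two such subspaces is cut out by the union of the two systems, which still has rational coefficients, hence is again rational. This argument is shorter and avoids any dimension count. Your argument works on the \emph{primal} side, proving directly that the rational points of the intersection span it by comparing Grassmann formulas over $\bbQ$ and over $\bbR$. Both arguments are standard; the orthogonal-vector approach is more concise, while yours is more self-contained and does not require introducing the dual description.
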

The proof is the same as for rational subspaces, and use the description of subspaces by their orthogonal vectors.
\begin{defn}
Given a fixed complete lattice $\Lambda$, we define the height $H_{\Lambda}$ of a $\Lambda$-rational subspace $\textbf{M}$ to be the fundamental volume 
\[ H_{\Lambda}(\textbf{M}) = \det( \mathfrak{M} ) = \det(\textbf{M}\cap \Lambda) \]
of the $\Lambda$-points of $\textbf{M}$.
\end{defn}

\begin{prop}[Schmidt's inequality]\label{SchmidtHeightGen}
Let $\Lambda$ be a complete lattice. Let $\textbf{M}_1,\textbf{M}_2$ be two $\Lambda$-rational subspaces in $\bbR^d$, we have
\begin{equation}\label{SHlambda}
H_\Lambda(\textbf{M}_1+ \textbf{M}_2) \cdot H_\Lambda(\textbf{M}_1\cap \textbf{M}_2) \ll H_\Lambda(\textbf{M}_1)\cdot H_\Lambda(\textbf{M}_2).
\end{equation}
\end{prop}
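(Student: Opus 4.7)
The plan is to reduce Proposition \ref{SchmidtHeightGen} to the classical case (Proposition \ref{SchmidtHeight}) by pulling $\Lambda$ back to the standard lattice $\bbZ^d$ via a linear change of coordinates, and then controlling the distortion of heights incurred by that change.

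Fix a $\bbZ$-basis $e_1,\dots,e_d$ of $\Lambda$ and let $T\colon\bbR^d\to\bbR^d$ be the unique $\bbR$-linear isomorphism sending $e_i$ to the $i$-th standard basis vector. Then $T(\Lambda)=\bbZ^d$, and for any subspace $\mathbf{M}\subset\bbR^d$ one has $T(\mathbf{M})\cap\bbZ^d=T(\mathbf{M}\cap\Lambda)$. Consequently $\mathbf{M}$ is $\Lambda$-rational if and only if $T(\mathbf{M})$ is $\bbZ^d$-rational in the usual sense, and $T$ commutes with sum and intersection of subspaces: $T(\mathbf{A}+\mathbf{B})=T(\mathbf{A})+T(\mathbf{B})$ and $T(\mathbf{A}\cap\mathbf{B})=T(\mathbf{A})\cap T(\mathbf{B})$. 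In particular, applying Proposition \ref{SchmidtHeight} to $T(\mathbf{A})$ and $T(\mathbf{B})$ gives
\[
H\bigl(T(\mathbf{A}+\mathbf{B})\bigr)\cdot H\bigl(T(\mathbf{A}\cap\mathbf{B})\bigr)\;\ll\;H\bigl(T(\mathbf{A})\bigr)\cdot H\bigl(T(\mathbf{B})\bigr).
\]

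It remains to compare $H_\Lambda(\mathbf{M})$ and $H(T(\mathbf{M}))$. If $v_1,\dots,v_k$ is a $\bbZ$-basis of $\mathbf{M}\cap\Lambda$, then by the preceding paragraph $T(v_1),\dots,T(v_k)$ is a $\bbZ$-basis of $T(\mathbf{M})\cap\bbZ^d$, and by definition
\[
H_\Lambda(\mathbf{M})=\|v_1\wedge\cdots\wedge v_k\|,\qquad H\bigl(T(\mathbf{M})\bigr)=\|T(v_1)\wedge\cdots\wedge T(v_k)\|=\|(\wedge^k T)(v_1\wedge\cdots\wedge v_k)\|,
\]
where $\|\cdot\|$ denotes the Euclidean norm induced on the $k$-th exterior power. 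Since $\wedge^k T$ is a fixed invertible operator, its operator norm and that of its inverse bound the ratio $H_\Lambda(\mathbf{M})/H(T(\mathbf{M}))$ from above and below by constants depending only on $\Lambda$ and the dimension $d$. Hence there exist constants $c_1,c_2>0$ (depending only on $\Lambda$) such that for every $\Lambda$-rational subspace $\mathbf{M}$,
\[
c_1\,H\bigl(T(\mathbf{M})\bigr)\;\leq\;H_\Lambda(\mathbf{M})\;\leq\;c_2\,H\bigl(T(\mathbf{M})\bigr).
\]

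Combining the two inequalities above yields
\[
H_\Lambda(\mathbf{A}+\mathbf{B})\cdot H_\Lambda(\mathbf{A}\cap\mathbf{B})\;\leq\;c_2^{\,2}\,H\bigl(T(\mathbf{A}+\mathbf{B})\bigr)H\bigl(T(\mathbf{A}\cap\mathbf{B})\bigr)\;\ll\;H\bigl(T(\mathbf{A})\bigr)H\bigl(T(\mathbf{B})\bigr)\;\leq\;c_1^{-2}\,H_\Lambda(\mathbf{A})H_\Lambda(\mathbf{B}),
\]
which is the required inequality \eqref{SHlambda}. The only real subtlety is the uniformity of the distortion constants $c_1,c_2$ across all $\Lambda$-rational subspaces, but this is automatic because $\wedge^k T$ is a single linear map on a finite-dimensional space and we only range over finitely many values $k=0,1,\dots,d$; no delicate argument is needed beyond observing that the classical Schmidt inequality already absorbs absolute constants into $\ll$.
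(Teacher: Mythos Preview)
Your reduction is correct for the proposition as literally stated, but it follows a genuinely different path from the paper, and the difference is not cosmetic. The paper argues directly: pick a $\bbZ$-basis $\mathcal{E}$ of $\mathbf{A}\cap\mathbf{B}\cap\Lambda$, extend it to bases $(\mathcal{E},\mu_1)$ of $\mathbf{A}\cap\Lambda$ and $(\mathcal{E},\mu_2)$ of $\mathbf{B}\cap\Lambda$, observe that $(\mathcal{E},\mu_1,\mu_2)$ spans a sublattice of $(\mathbf{A}+\mathbf{B})\cap\Lambda$, and compute its covolume as $H_\Lambda(\mathbf{A})\,H_\Lambda(\mathbf{B})/H_\Lambda(\mathbf{A}\cap\mathbf{B})$ via an orthogonal projection. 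This is just the classical proof carried out verbatim for a general lattice, and it yields the inequality with constant~$1$, independent of~$\Lambda$.

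Your approach instead produces constants $c_1,c_2$ controlled by the operator norms of $\wedge^k T^{\pm1}$, hence depending on~$\Lambda$. That is harmless in isolation, but in the paper's intended application (Section~\ref{lf}) the lattice is $\Lambda=\mathcal{G}_T\mathcal{L}$ with $\mathcal{G}_T=\mathrm{diag}(T^{-1},\dots,T^{-1},T^n)$ and $T$ chosen large in terms of the current pattern of best approximations, hence in terms of the index~$k_0$. The singular values of your change-of-basis map are then of size $T$ and $T^{-n}$, so $c_2/c_1$ grows polynomially in~$T$; after the explicit $T$-powers cancel in \eqref{MainEq2} and its consequences, a residual $T$-dependent factor would survive and spoil the uniformity in the index required for \eqref{equivthmdual}. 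So while your argument does prove the proposition, the paper's direct proof buys precisely the $\Lambda$-uniformity that the downstream argument needs.
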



\begin{proof}
 
 Let $\textbf{N} =  \textbf{M}_1 \cap \textbf{M}_2 $
and denote 
\[ {\rm dim}\, \textbf{M}_j = m_j,\,j=1,2,\,\,\,\,\,\,\,\, {\rm dim}\, \textbf{N} = n,\,\,\,\,  {\rm dim}\,( \textbf{M}_1+\textbf{M}_2) = f.
\]
Then
\[
f = m_1+m_2-n.
\]
Consider the orthogonal complement $\textbf{K}$ to $\textbf{N}$, ${\rm dim}\, \textbf{K}\cap \textbf{M}_j  = m_j-n.$
Let $ \mathcal{N} $ be a basis in $\textbf{N}$. For $j=1,2$, we take a collection of vectors $\mu_j\subset \textbf{M}_j$ in such a way that  the collection $\mathcal{M}_j = \mathcal{N}\cup \mu_j$ forms a basis of $\textbf{M}_j$. This means that we complete $\mathcal{N}$ by  $\mu_j$ to a basis of $\mathbf{M}_j$.
Let $\mu_j^*$ be a collection of independent vectors in $\textbf{K}\cap \textbf{M}_j$ which can me obtained  from $\mu_j$ by orthogonal projection  on $\textbf{K} $ parallel to 
$\textbf{N}$. Let us consider the parallelepiped $ \Pi \subset   \textbf{M}_1+\textbf{M}_2$ generated by all the vectors from the collection
 $\mathcal{N}\cup \mu_1\cup \mu_2$, and the parallelepiped $ \Pi ^*\subset   \textbf{M}_1+\textbf{M}_2$ generated by all the vectors from the collection
 $\mathcal{N}\cup \mu_1^*\cup \mu_2^*$. We consider also the parallelepipeds
 \[
 \Pi_\textbf{N},\,\,\,\, \Pi_{1}^*,\,\,\,\,\Pi_2^*
 \]
 generated by the collections of independent vectors
 \[ \mathcal{N},\,\,\,\, \mu_1^*,\,\,\,\,\mu_2^*
 \]
 correspondingly.  
 Also we need to consider parallelepipeds
 \[
  \Pi_{\textbf{M}_j}, \,\,\,\,j=1,2
 \]
 corresponding to the collections
 \[
 \mathcal{M}_j = \mathcal{N}\cup \mu_j,\,\,\,\, j=1,2.
 \]
 It is clear that 
  \[
  {\rm vol}_{m_j}\, \Pi_{\textbf{M}_j}  = {\rm vol}_{m_j -n}\, \Pi_j^* \cdot    {\rm vol}_{n}\, \Pi_{\textbf{N}},\,\,\,\,\,
   j =1,2
  \]
  and
  \[
    {\rm vol}_{f}\, \Pi =
    {\rm vol}_{f}\, \Pi^*
    \le  {\rm vol}_{m_1 -n}\, \Pi_1^* \cdot   {\rm vol}_{m_2-n}\, \Pi_2^*
    \cdot    {\rm vol}_{n}\, \Pi_{\textbf{N}} =
    \frac{{\rm vol}_{m_1 }\, \Pi_{\textbf{M}_1} \cdot   {\rm vol}_{m_2}\, \Pi_{\textbf{M}_2}}{{\rm vol}_{n}\, \Pi_{\textbf{N}} },
  \]
where ${\rm vol}_k (\cdot) $ stands for $k$-dimensional volume. So
  \[
    {\rm vol}_{f}\, \Pi  \cdot
      {{\rm vol}_{n}\, \Pi_{\textbf{N}} }
    \le
    {{\rm vol}_{m_1 }\, \Pi_{\textbf{M}_1} \cdot   {\rm vol}_{m_2}\, \Pi_{\textbf{M}_2}} ,
  \]
To obtain (\ref{SHlambda}) we need to apply the last inequality in the case when
$\mathcal{N}$ is a basis of the lattice $\Lambda\cap (\textbf{M}_1\cap \textbf{M}_2)$  and $ \mu_1, \mu_2$ complete $\mathcal{N}$ to the basises of lattices $\Lambda\cap \textbf{M}_1$
and $\Lambda\cap \textbf{M}_2$ correspondingly.
\end{proof}

\subsection{Hyperbolic rotation}
Given a sequence $(\bz_l)_{l\in\bbN} ={}^t(q_{1,l}, \ldots , q_{n,l}, a_l)$ of best approximations to a point $\btheta\in \bbR^n$ for the approximation by one linear form, we can extract a subsequence satisfying Lemma \ref{lemnnb}. For approximation by one linear form, it may happen that the sequence of best approximation vectors spans a subspace of dimension $m<n+1$ in $\bbR^{n+1}$ (see \cite{Chev}). In this case, Proposition \ref{p-decrG} provides that Theorem \ref{MainThm} holds with the stronger lower bound $G^*(m,\hat{\omega}(\btheta))$ instead of $G^*(n,\hat{\omega}(\btheta))$. See the remark after the proof of  Lemma \ref{lemnnb}.  In the sequel, we suppose that the best approximation vectors span the full space. In particular the coordinates $\theta_1, \ldots , \theta_n$ are linearly independent with $1$.\\
Consider the matrix $$
 {L} =
\left(
 \begin{array}{cccc}
 1& \cdots & 0&0\cr
 \vdots &\vdots& \vdots&\vdots\cr
 0&\cdots&1&0\cr
 \theta_1&\cdots &\theta_n&1
 \end{array}
 \right).
 $$

We can consider the sequence of best approximation as points of the lattice $\mathcal{L}= L.\bbZ^{n+1}$ with
\[ (\tilde{\bz}_l)_{l\in\bbN} = {L}. (\bz_l)_{l\in\bbN} \in \mathcal{L}.\]
Here, we simply replace the last coordinate $a_l$ by the error of approximation $L_l$.\\

Consider a large parameter $T$, and the hyperbolic rotation
$$
 {\cal G}_T =
 \left(
 \begin{array}{cccc}
 T^{-1}& \cdots & 0&0\cr
 \vdots &\vdots& \vdots&\vdots\cr
 0&\cdots&T^{-1}&0\cr
 0&\cdots &0&T^n
 \end{array}
 \right).
 $$

The lattice $\mathcal{L}'= {\cal G}_T {\cal  L}$ is complete since the determinants of $L$ and ${\cal G}_T$ are $1$.\\

Consider the sequence $(\bz'_l)_{l\in\bbN} = {\cal G}_T { L} (\bz_l)_{l\in\bbN} \in \mathcal{L}'$ where
\begin{eqnarray*}
\bz_l'= {}^t(z'_{1,l}, \ldots , z'_{n,l}, z'_{ n+1,l}) = {}^t(T^{-1}q_{1,l}, \ldots , T^{-1}q_{n,l}, T^nL_l).
\end{eqnarray*}

For best approximation by one linear form we defined $M_l = \max_{1\leq i \leq n} |z_{i,l}|$, and after hyperbolic rotation we have
\[ \max_{1\leq i \leq n} |z'_{i,l}| \leq M_l T^{-1}. \]

Since we assume that the best approximation vectors $(\bz_l)_{l\in\bbN}$ span the full space $\bbR^{n+1}$, we can apply Lemma \ref{lemnnb}  to $(\bz_l)_{l\in\bbN}$ and obtain a set of indices $(r_k)_{0 \leq 2^{n-2}-1}$. Denote
\[ \mathcal{S}'_{3,l} = \{ \bz'_{r_l-1}, \bz'_{r_l}, \bz'_{r_l+1} \} ={\cal G}_T { L} \mathcal{S}_{3,l} , \; \; \; 0 \leq l \leq 2^{n-2}-1 \]
and for $4 \leq k \leq n+1$ and $ 0\leq l \leq 2^{n-k+1} -1$ , denote by $\mathcal{S}'_{k,l}$ the set of best approximation vectors
\[ \mathcal{S}'_{k,l} = \cup_{\nu=0}^{2^{k-3}-1}  \mathcal{S}'_{3,2^{k-3}l + \nu} ={\cal G}_T { L}\mathcal{S}_{k,l} . \]

Since ${\cal G}_T$ and ${\cal L}$ have determinant $1$, these sets satisfies the properties of linear independence and inclusion listed in Lemma \ref{lemnnb}.\\

Further in the proof of Theorem \ref{MainThm}, we need an estimate on the fundamental volumes of the lattices $\Lambda_k'=\langle \bz'_k , \bz'_{k+1}   \rangle_\bbZ$ and $\Gamma_k'=\langle \bz'_{k-1} , \bz'_k , \bz'_{k+1}   \rangle_\bbZ$ spanned by consecutive independent vectors $\bz'_l$. For large $T$, we can follow a similar proof as in Lemma \ref{estim}.

\begin{lem}\label{estimrot}
Fix an index $k$. Let $T$ be large enough so that 
\begin{equation}\label{defT}
T> M_{k+1} \; \textrm{ and } \; T > L_{k-1}^{-1/n}.
\end{equation}
Given two consecutive and linearly independent best approximation vectors $\bz_k , \bz_{k+1}$, the fundamental volume $\det\Lambda_k'$ satisfies

\begin{equation}\label{uma01}
\det\Lambda_k' \asymp  L_k T^n M_{k+1} T^{-1} = L_k  M_{k+1} T^{n-1}.
\end{equation}

Given three consecutive and linearly independent best approximation vectors $\bz_{k-1}, \bz_k , \bz_{k+1}$, the fundamental volume $\det\Gamma_k'$ satisfies

\begin{equation}\label{uma02}
\det{\Gamma_k'} \ll L_{k-1} T^n M_kT^{-1} M_{k+1}T^{-1} = L_{k-1}M_kM_{k+1} T^{n-2}.
\end{equation}
\end{lem}

\begin{proof}
For $T $ satisfying \eqref{defT}, we see that $\bz_k'={}^t(T^{-1}q_{1,k}, \ldots , T^{-1}q_{n,k}, T^nL_k)$ satisfies

\begin{equation}\label{sizeT} |T^n L_k| > 1 \; \textrm{  and  }  \;  |T^{-1}q_{i,k}| < 1 \; \textrm{ for } 1 \leq i \leq n.\end{equation}

Consider the $2 \times (n+1)$ matrix 
\begin{equation}\label{uma1}
{}^t\left(\begin{array}{cccc}z'_{1,k}& \ldots & z'_{n,k}& z'_{n+1,k} \\
z'_{1,k+1}& \ldots & z'_{n,k+1}& z'_{n+1,k+1}\end{array}\right) ={}^t \left(\begin{array}{cccc}T^{-1}q_{1,k}& \ldots &T^{-1}q_{n,k}& T^nL_k \\
T^{-1}q_{1,k+1}& \ldots & T^{-1}q_{n,k+1}& T^nL_{k+1}\end{array}\right)
\end{equation}
and the $3\times (n+1)$ matrix 
\begin{equation}\label{uma2}
{}^t\left(\begin{array}{cccc}z'_{1,k-1}& \ldots & z'_{n,k-1}& z'_{n+1,k-1}\\
z'_{1,k}& \ldots & z'_{n,k}& z'_{n+1,k} \\
z'_{1,k+1}& \ldots & z'_{n,k+1}& z'_{n+1,k+1}\end{array}\right) ={}^t \left(\begin{array}{cccc}T^{-1}q_{1,k-1}& \ldots &T^{-1}q_{n,k-1}& T^nL_{k-1} \\
T^{-1}q_{1,k}& \ldots &T^{-1}q_{n,k}& T^nL_k \\
T^{-1}q_{1,k+1}& \ldots & T^{-1}q_{n,k+1}& T^nL_{k+1}\end{array}\right).
\end{equation}

The rest of the proof is completely analogous to the proof of Lemma \ref{estim}. To obtain the upper bounds in \eqref{uma01} and \eqref{uma02} we need to get upper bounds for $2\times2$ minors of the matrix (\ref{uma1}) and for $3\times 3$ minors of the matrix (\ref{uma2}) by taking into account inequalities \eqref{sizeT}. This bounds will be of the form
\[
\text{$2\times2$ minors of (\ref{uma1})}\,\, \ll L_k T^n M_{k+1} T^{-1} = L_k  M_{k+1} T^{n-1}
\]
and
\[
\text{$3\times3$ minors of (\ref{uma2})}\,\, \ll L_{k-1} T^n M_kT^{-1} M_{k+1}T^{-1} = L_{k-1}M_kM_{k+1} T^{n-2}.
\]
Then application of Lemma \ref{det2} gives upper bounds in (\ref{uma01}) and (\ref{uma02}).


The lower bound for $\det\Lambda_k'$ from  (\ref{uma01}) follows from Minkowski's first convex body theorem as well,
analogously to the argument of the final part of the proof of Lemma 2. One should consider the symmetric  convex body
\[
\Pi  =\left\{
\bz :\,\,
\max_{1\le j \le n} |z_j|< M_{k+1},\,  |z_{n+1}|< L_k\right\}
\]
and its image
\[\mathcal{G}_T \Pi =
\left\{
\bz :\,\,
\max_{1\le j \le n} |z_j|< \frac{M_{k+1}}{T},\,  |z_{n+1}|< T^nL_k\right\}
.
\]
 It is clear that
 \[
 \mathcal{G}_T\Pi \cap   \mathcal{L}' =  \Pi\cap \mathcal{L} =\{ {\bf 0}\}.
 \]
 Consider the section
 \[
 P = \mathcal{G}_T \Pi  \cap \langle \bz_k',\bz_{k+1}'\rangle_\mathbb{R},
 \]
 by means of Minkowski's theorem, we obtain the upper estimate for its area \[
 {\rm area}\, P \le4 \Lambda_k'.
 \]
 The lower bound
 \[
 {\rm area}\, P\gg  L_k  M_{k+1} T^{n-1}
 \]
 comes from (\ref{defT}).

\end{proof}

Here, we need a large parameter $T$ to obtain a good upper bound for the minors. If $T=1$, such upper bounds are false.
\begin{rmq}
In the case of a lattice generated by both $\Lambda: = \langle \bz'_\nu, \bz'_{\nu+1} \rangle_\bbZ = \langle \bz'_{k-1}, \bz'_{k} \rangle_\bbZ$
we have
\begin{equation}\label{split2rot}
\det\Lambda \asymp  L_\nu  M_{\nu+1} T^{n-1} \asymp L_{k-1}  M_{k} T^{n-1}.
\end{equation}
\end{rmq}

\subsection{Proof of Theorem \ref{MainThm} for approximation by one linear form}

The proof in the case of approximation by one linear form follow the same steps as in the case of simultaneous approximation. Hence, we give a sketch of the proof in general, but
to make the ideas of the proof clearer, in Section \ref{4nbdual}  we give a very detailed proof in the simplest case of approximation to $4$ numbers. 
Idea of the argument comes from \cite{Mo*}. Note that by reversing time, we get two inequalities in term of coefficients, and two in term of linear forms.

\subsubsection{Proof in any dimension}

Consider $\btheta\in\bbR^n$ with $\bbQ$-linearly independent coordinates with $1$, and take $\alpha^*<\hat{\omega}(\btheta)$. Let $g^*= G^* (n,\alpha^*)$ be the unique positive root of the polynomial $R^*_{n,\alpha^*}$ defined in \eqref{polybis}, recall \eqref{positivitydual}. We define for $4\leq k \leq n$ the parameters
\begin{equation}\label{defyzn} z_{n-k} = \cfrac{R^*_{k-1,\alpha^*}(g^*)}{R^*_{k-2,\alpha^*}(g^*)} \;\; \textrm{ and } \;\; y_{n-k} = \cfrac{R^*_{k-1,\alpha^*}(g^*)}{g^*R^*_{k-2,\alpha^*}(g^*)}. \end{equation}
which satisfy the assumptions \eqref{inityz} and \eqref{recyz} of Lemma \ref{optiformula} because of the induction formula \eqref{aaa} and $R^*_{n,\alpha^*}(g^*) = 0$.

Considering a sequence $(\bz_l)_{l\in\bbN}$ of best approximations to a point $\btheta\in \bbR^n$, we obtain via Lemma \ref{lemnnb} a set of indices satisfying good properties. Suppose that $k_0$ is large enough so that for $\alpha^* < \hat{\omega}(\btheta)$.
 \begin{equation}\label{zz3'}
L_j \leq M_{j+1}^{-\alpha^*}  ,  \; \; \textrm{ for } \; \; j \geq k_0.
\end{equation}

For any fixed  $T\gg1$, the hyperbolic rotation $(\bz_l')_{l\in\bbN}=\mathcal{G}_T \mathcal{L} \cdot  (\bz_l)_{l\in\bbN}$ preserves the property of linear independence, and hence the structure of the pattern of best approximation vectors constructed in Lemma \ref{lemnnb}. We consider the rotated sets $ \mathcal{S}'_{k,l} ={\cal G}_T { L}\cdot \mathcal{S}_{k,l}$, $ \mathcal{Q}'_{k,l} ={\cal G}_T { L}\cdot \mathcal{Q}_{k,l}$ from the sets $\mathcal{S}_{k,l}$ and $\mathcal{Q}_{k,l}$ defined in Lemma \ref{lemnnb}. We denote respectively by $\mathfrak{S}'_{k,l}$ and $\mathfrak{Q}'_{k,l}$ the lattices of their $\mathcal{G}_T\mathcal{L}$-points. Section \ref{lfl8} explains that we can modify the proof of Lemma \ref{optiformula} so that
\begin{equation}\label{MainEq2}
\prod_{l=0}^{2^{n-4} -1} \left(\cfrac{\det\left(  \mathfrak{S}'_{3,4l}\right) \det\left(  \mathfrak{Q}'_{3,l} \right)^{1-y_{n-4}} }{ \det\left(  \mathfrak{Q}'_{2,2l} \right)}\right)^{w_{n-4,l}}\left(\cfrac{ \det\left(  \mathfrak{Q}'_{3,l} \right) ^{1-z_{n-4}}\det\left(  \mathfrak{S}'_{3,4l+3} \right) }{ \det\left(  \mathfrak{Q}'_{2,2l+1} \right)}\right)^{w_{n-4,l}'}  \gg 1
\end{equation}
where the parameters $y_{n-4}, z_{n-4}$ are defined in \eqref{defyzn} and $w_{n-4,l}, w_{n-4,l}'$ are defined by \eqref{recw} and satisfy \eqref{sumexpl1} and \eqref{sumexpl2}. \\

As for the proof of the analogue of Lemma \ref{Keylemma}, we express the denominators in two different ways. Indeed, 
\begin{eqnarray}
  \mathfrak{Q}'_{2,2l} &=& \langle \bz'_{r_{4l}}, \bz'_{r_{4l}+1} \rangle_\bbZ = \langle \bz'_{r_{4l+1}-1}, \bz'_{r_{4l+1}} \rangle_\bbZ\\
 \mathfrak{Q}'_{2,2l+1}  &=& \langle \bz'_{r_{4l+2}}, \bz'_{r_{4l+2}+1} \rangle_\bbZ = \langle \bz'_{r_{4l+3}-1}, \bz'_{r_{4l+3}} \rangle_\bbZ
\end{eqnarray}
and we write both $\det\left(  \mathfrak{Q}'_{2,2l} \right)$ and $\det\left(  \mathfrak{Q}'_{2,2l+1} \right)$ with their two expressions coming from \eqref{split2rot}. Given $s,t \in [0,1]$, analogously to Lemma \ref{Keylemma}, we define 
\begin{equation}\label{defg*}
g^*(s,t) =  \frac{(1-\alpha^*)s}{(1-\alpha^*)w^* (s,t)-s} = \frac{(1-\alpha^*)(1-w^*(s,t)-t)}{t}. \end{equation}
where the second equality comes from $w^* (s,t)\in [0,1]$ being  the root of the equation
  \begin{equation}\label{eqo*}
 w^{*2} -\left( 1-t- \cfrac{s}{\alpha^*-1}  \right) w^* - \cfrac{s}{\alpha^*-1} = 0.
 \end{equation}
 We obtain the analogue of \eqref{gay} for $g^*(s,t)$
 \begin{equation}\label{gay*}
   1/g^*(s,t)^2 -\left( \frac{1}{\alpha^*-1} +\frac{1-t}{s}\right) \cdot 1/g^*(s,t) - \frac{t}{s(\alpha^*-1)} = 0.
   \end{equation}
 from which we deduce the analogue of \eqref{guy2} and \eqref{guy3}, that is 
    \begin{eqnarray}\label{guy1*}
     s &=& \frac{R^*_{3,\alpha^*}(g^*)}{R^*_{2,\alpha^*}(g^*)}, \; \;      \textrm{ for } \; \;   g^* = g^*(1-s,1),\\  \label{guy2*}
      t &=& \frac{R^*_{3,\alpha^*}(g^*)}{g^*R^*_{2,\alpha^*}(g^*)}, \; \; \textrm{ for } \; \;     g^* = g^*(1,1-t). 
   \end{eqnarray}
 Consider 
 \[ w_1 = w^*(1, 1-y_{n-4} ) \;\; \textrm{ and } \;\; w_2 = w^*(1-z_{n-4},1)\]
 and the associated values $g_1=g^*(1, 1-y_{n-4} )$ and $g_2=g^*(1-z_{n-4},1)$.\\
From \eqref{guy1*} and  \eqref{guy2*}, following similar argument as in Section \ref{PfMthm}, we get the analogue of \eqref{equalg}:
  \begin{eqnarray}\label{defg*} 
 g^*&=& g_1 = \cfrac{\alpha^*-1}{(\alpha^*-1)w_1 +1} = \cfrac{(\alpha^*-1)(w_1 - y_{n-4})}{1-y_{n-4}}, \\
g^* &=&  g_2 =  \cfrac{(\alpha^*-1)(1-z_{n-4})}{(\alpha^*-1)w_2 +1-z_{n-4}} = (\alpha^*-1)w_2 .
  \end{eqnarray}
 
Applying the estimates of Lemma \ref{estimrot}, and weighting the two ways to write the denominators coming from \eqref{split2rot} with parameters $w_1$ and $w_2$, we get
\begin{eqnarray*}\label{MainEq3}
\prod_{l=0}^{2^{n-4} -1} \left(\cfrac{\left( L_{r_{4l}-1} M_{r_{4l}} M_{r_{4l}+1}T^{n-2}\right) \left(  L_{r_{4l+1}-1} M_{r_{4l+1}} M_{r_{4l+1}+1}T^{n-2} \right)^{1-y_{n-4}} }{ \left(  L_{r_{4l}} M_{r_{4l}+1} T^{n-1}\right)^{w_1} \left(  L_{r_{4l+1}-1} M_{r_{4l+1}}T^{n-1}\right)^{1-w_1}}\right)^{w_{n-4,l}}   &\cdot \\
\prod_{l=0}^{2^{n-4} -1} \left(\cfrac{\left( L_{r_{4l+2}-1} M_{r_{4l+2}} M_{r_{4l+2}+1}T^{n-2}\right)^{1-z_{n-4}} \left(  L_{r_{4l+3}-1} M_{r_{4l+3}} M_{r_{4l+3}+1}T^{n-2} \right) }{ \left(  L_{r_{4l+2}} M_{r_{4l+2}+1} T^{n-1}\right)^{w_2} \left(  L_{r_{4l+3}-1} M_{r_{4l+3}}T^{n-1}\right)^{1-w_2}}\right)^{w'_{n-4,l}}  &\gg& 1\\
\end{eqnarray*}

Furthermore, by \eqref{sumexpl1} and \eqref{sumexpl2}, $T$ has the same power $(n-1)(n-2)$ at numerator and denominator and can be cancelled out.
\begin{eqnarray*}\label{MainEqnoT}
\prod_{l=0}^{2^{n-4} -1} \left(\cfrac{\left( L_{r_{4l}-1} M_{r_{4l}} M_{r_{4l}+1}\right) \left(  L_{r_{4l+1}-1} M_{r_{4l+1}} M_{r_{4l+1}+1} \right)^{1-y_{n-4}} }{ \left(  L_{r_{4l}} M_{r_{4l}+1} \right)^{w_1} \left(  L_{r_{4l+1}-1} M_{r_{4l+1}}\right)^{1-w_1}}\right)^{w_{n-4,l}}   &\cdot \\
\prod_{l=0}^{2^{n-4} -1} \left(\cfrac{\left( L_{r_{4l+2}-1} M_{r_{4l+2}} M_{r_{4l+2}+1}\right)^{1-z_{n-4}} \left(  L_{r_{4l+3}-1} M_{r_{4l+3}} M_{r_{4l+3}+1}\right) }{ \left(  L_{r_{4l+2}} M_{r_{4l+2}+1} \right)^{w_2} \left(  L_{r_{4l+3}-1} M_{r_{4l+3}}\right)^{1-w_2}}\right)^{w'_{n-4,l}}  &\gg& 1\\
\end{eqnarray*}

Hence, at least \emph{one} of the following four inequalities holds:
\begin{eqnarray}\label{dd10} 
L_{r_{4l}-1} M_{r_{4l}} M_{r_{4l}+1} &\gg& \left( L_{r_{4l}} M_{r_{4l}+1} \right)^{w_1},  \\\label{dd20}
\left(  L_{r_{4l+1}-1} M_{r_{4l+1}} M_{r_{4l+1}+1} \right)^{1-y_{n-4}} &\gg& \left(  L_{r_{4l+1}-1} M_{r_{4l+1}}\right)^{1-w_1},\\\label{dd30}
\left( L_{r_{4l+2}-1} M_{r_{4l+2}} M_{r_{4l+2}+1}\right)^{1-z_{n-4}}  &\gg& \left(  L_{r_{4l+2}} M_{r_{4l+2}+1} \right)^{w_2}, \\\label{dd40}
  L_{r_{4l+3}-1} M_{r_{4l+3}} M_{r_{4l+3}+1}  &\gg& \left(  L_{r_{4l+3}-1} M_{r_{4l+3}}\right)^{1-w_2}.
\end{eqnarray}

Using \eqref{zz3'} and \eqref{defg*}, we deduce that  

\begin{enumerate}
\item{inequality  (\ref{dd10})  leads to  $L_{r_{4l}} \ll M_{r_{4l}}^{-\alpha^*g^*}$;}
\item{ inequality (\ref{dd20}) leads to  $M_{r_{4l+1}+1} \gg  M_{r_{4l+1}}^{g^*}$;}
\item{inequality  (\ref{dd30}) leads to $L_{r_{4l+2}}\ll M_{r_{4l+2}}^{-\alpha^*g^*}$;}
\item{inequality  (\ref{dd40}) leads to  $M_{r_{4l+3}+1} \gg M_{r_{4l+3}}^{g^*}$.}
\end{enumerate} 
 
We explain how to get the first two inequalities of this group from the first  two  inequality of the previous group. The others are obtained in a similar way, however one should note that
the inequality
\eqref{positivitydual}
 is crucial for checking the positivity of exponents in case 3). 

1) Indeed, suppose (\ref{dd10}). Then 
  as $L_{r_{4l}} < M_{r_{4l}+1}^{-\alpha^*}$ or $M_{r_{l4}+1}< L_{r_{4l}}^{-1/\alpha^*} $, we deduce the upper bound for the linear form  $ L_{r_{4l}}$ by means of the estimate
\[
L_{r_{4l}}^{w_1} \ll  L_{r_{4l}-1} M_{r_{4l}}  M_{r_{4l}+1}^{1-w_1} \ll  M_{r_{4l}}^{1-\alpha^*} L_{r_{4l}}^{-\frac{1-w_1}{\alpha^*}}
  \;\;\textrm {, or}\;\;
 L_{r_{4l}}  \ll M_{r_{4l}}^{ \frac{1-\alpha^*}{w_1+(1-w_1)/\alpha^*}}  .
\]
 We use the first inequality form (\ref{defg*}) to conclude that 
 $L_{r_{4l}} \ll M_{r_{4l}}^{-\alpha^*g^*}$.
 
 2) Suppose (\ref{dd20}). Then 
 we use $L_{r_{4l+1}-1}M_{r_{4l+1}} < M_{r_{4l+1}}^{1-\alpha^*}$.
 Now
\[ 1 \ll  M_{r_{4l+1}+1}^{1-y_{n-4}} (L_{r_{4l+1}-1}M_{r_{4l+1}})^{{w_1-y_{n-4}}} \ll M_{r_{4l+1}+1}^{1-y_{n-4}}  M_{r_{4l+1}}^{(1-\alpha^*)({w_1-y_{n-4}})}\]
(here 
we use the inequality $ w_1-y_{n-4}>0$ which
follows from  $g^*>0$ and the second inequality form (\ref{defg*})).
We use the second inequality form (\ref{defg*}) to conclude that
\[
M_{r_{4l+1} +1} \gg M_{r_{4l+1}} ^{(\alpha^*-1)\frac{w_1-y_{n-4}}{1-y_{n-4}}} = M_{r_{4l+1}}^{g^*}.\]
 
 We have checked \eqref{equivthmdual} and the result follows.\qed

 
%
 %

\subsubsection{Example of approximation to 4 numbers}\label{4nbdual}
Consider a sequence of best approximation vectors to $\btheta\in\bbR^4$ by one linear form. We may assume that it spans $\bbR^5$. Take $\alpha^*<\hat{\omega}(\btheta)$.\\

We consider the unique positive real number $g^*$ such that $\alpha^*-1-g^*-(g^*)^2-(g^*)^3=0$. Set 
\[ x := \frac{\alpha^*-1-g^*-(g^*)^2}{\alpha^*-1-g^*} = \cfrac{\alpha^*-1-\alpha^*g^*}{g^*(g^*-\alpha^*+1)} = \cfrac{R^*_{3,\alpha^*}(g^*)}{R^*_{2,\alpha^*}(g^*)} = 1-\cfrac{R^*_{3,\alpha^*}(g^*)}{g^*R^*_{2,\alpha^*}(g^*)}.\]
Set the parameters (using \eqref{eqo*})   
\[  w_1 = w^*(1,x)=  \frac{\alpha^*-1-g^*}{g^*(\alpha^*-1)} \;\; \textrm{ and } \;\; w_2 = w^*(1-x,1) = \frac{g^*}{\alpha^*-1}.\]
One can check that 
\begin{equation}\label{Gstar}
g^* = \cfrac{\alpha^*-1}{(\alpha^*-1)w_1+1} = (\alpha^*-1)(1+(w_1-1)/x) = \cfrac{(\alpha^*-1)(1-x)}{(\alpha^*-1)w_2 +1-x} = (\alpha^*-1)w_2.
\end{equation}
As $0<g^*=(\alpha^*-1)\cfrac{x+w_1-1}{x}$, we deduce that 
\begin{equation}\label{W1}
w_1 +x-1 > 0.
\end{equation}
As $\cfrac{\alpha^*-1}{\alpha^*} < 1 \leq g^* \leq \alpha^*-1$ we have 
\begin{equation}\label{W2}
1-x - w_2 = 1-\cfrac{ R_{3,\alpha^*}^*(g^*)}{R^{*}_{2,\alpha^*}(g^*)} - w_2 = \cfrac{  \alpha^* g^* \left(g^*-  \cfrac{\alpha^*-1}{\alpha^*} \right)   }{ (\alpha^*-1)(\alpha^*-1-g^*)  } >0
\end{equation}

Now we are able to start the proof. For an index $k_0\gg1$ we apply Lemma \ref{lem4nb}. It provides a pattern of best approximation vectors
  $$
  \pmb{z}_{r_0-1},
  \pmb{z}_{r_0},
  \pmb{z}_{r_0+1};
  \,\,\,\,\,
   \pmb{z}_{r_1-1},
  \pmb{z}_{r_1},
  \pmb{z}_{r_1+1};
  \,\,\,\,\,
   \pmb{z}_{r_2-1},
  \pmb{z}_{r_2},
  \pmb{z}_{r_2+1};
   \,\,\,\,\,
   \pmb{z}_{r_3-1},
  \pmb{z}_{r_3},
  \pmb{z}_{r_3+1};
  $$
 of linearly independent triples satisfying properties of Lemma \ref{lem4nb}. Consider $T$ such that $T> M_{r_3+1} \; \textrm{ and } \; T > L_{r_3-1}^{-1/n}$. For $j\ge r_0-1$, we apply the hyperbolic rotation to the integer vectors $\pmb{z}_{j}$ to get 
 \[  \pmb{z}_{j}' =   {\cal G}_T {L}\cdot \pmb{z}_{j}.\]
 
 For $0\leq i \leq 3$ we consider the subspace 
 \[\mathbf{S}_{3,i} = \langle   \pmb{z}'_{r_i-1},  \pmb{z}'_{r_i},  \pmb{z}'_{r_i+1} \rangle_\bbR\]
 and its lattice of $\mathcal{G}_T\mathcal{L}$ points 
 \[\mathfrak{S}_{3,i}=\mathbf{S}_{3,i} \cap \mathcal{G}_T\mathcal{L}.\] 
 We recall that \[ \mathbf{S}_{3,1} = \mathbf{S}_{3,2} = \mathbf{Q}. \]
 Consider the $2$-dimensional lattices
\[\Lambda_0 : = \langle \bz'_{r_0}, \bz'_{r_0+1} \rangle_{\bbZ} = \langle \bz'_{r_1-1}, \bz'_{r_1} \rangle_{\bbZ} =\mathbf{S}_{3,0} \cap \mathbf{S}_{3,1} \cap \mathcal{G}_T\mathcal{L}\]
and
\[\Lambda_1 := \langle \bz'_{r_2}, \bz'_{r_2+1} \rangle_{\bbZ} = \langle \bz'_{r_3-1}, \bz'_{r_3}\rangle_{\bbZ}= \mathbf{S}_{3,2}\cap \mathbf{S}_{3,3} \cap \mathcal{G}_T\mathcal{L}.\]

We apply Schmidt's inequality (Propositon \ref{SchmidtHeightGen}) with underlying lattice $\mathcal{G}_T\mathcal{L}$ to obtain the analogue of \eqref{sh4}
  $$
  \frac{{\rm det}\, \mathfrak{ S}_{3,0}
  ( {\rm det}\,\mathfrak{S}_{3,1} )^x}{{\rm det}\,\Lambda_0}
  \cdot
    \frac{({\rm det}\, \mathfrak{S}_{3,2})^{1-x} 
   {\rm det}\,\mathfrak{ S}_{3,3} }{{\rm det}\,\Lambda_1} \gg 1.
  $$
  By Lemma \ref{estimrot}, we get
  $$
  \frac{   L_{r_0-1}M_{r_0}M_{r_0+1} T^{2}
  ( L_{r_1-1}M_{r_1}M_{r_1+1} T^{2} )^x}{ L_{r_0} M_{r_0+1} T^{3}}
  \cdot
    \frac{(L_{r_2-1}M_{r_2}M_{r_2+1} T^{2})^{1-x} 
   L_{r_3-1}M_{r_3}M_{r_3+1} T^{2}}{L_{r_3-1} M_{r_3} T^{3}} \gg 1.
  $$
  Here, $T$ disappears as it has power $6$ at numerator and denominator :
  \[ 3+3 =6 = 2 +2x +2(1-x) +2.\]
  We deduce
   $$
  \frac{   L_{r_0-1}M_{r_0}M_{r_0+1} 
  ( L_{r_1-1}M_{r_1}M_{r_1+1} )^x}{ L_{r_0} M_{r_0+1} }
  \cdot
    \frac{(L_{r_2-1}M_{r_2}M_{r_2+1} )^{1-x} 
   L_{r_3-1}M_{r_3}M_{r_3+1} }{L_{r_3-1} M_{r_3} } \gg 1.
  $$
$\Lambda_0 = \langle \bz'_{r_0}, \bz'_{r_0+1}\rangle_\mathbb{Z} = \langle \bz'_{r_1-1}, \bz'_{r_1}\rangle_\mathbb{Z}$ therefore $L_{r_0} M_{r_0+1} \asymp L_{r_1-1} M_{r_1}$ according to \eqref{split2rot} and by analogous arguments applied to $\Lambda_1$ we get the second equation $ L_{r_2} M_{r_2+1} \asymp L_{r_3-1} M_{r_3}$.
Hence we can replace \[L_{r_0} M_{r_0+1} \; \textrm{ by } \; (L_{r_0} M_{r_0+1})^{w_1}(L_{r_1-1} M_{r_1})^{1-w_1}\]
  and 
  \[L_{r_3-1} M_{r_3} \; \textrm{ by } \; (L_{r_2} M_{r_2+1})^{w_2}(L_{r_3-1} M_{r_3})^{1-w_2}.\]   
  We deduce that at least \emph{one} of the four following inequalities holds
\begin{eqnarray}\label{dd1}
  L_{r_0-1}M_{r_0}M_{r_0+1} &\gg& (L_{r_0} M_{r_0+1})^{w_1},  \\\label{dd2}
  ( L_{r_1-1}M_{r_1}M_{r_1+1} )^x &\gg&  (L_{r_1-1} M_{r_1})^{1-w_1}, \\ \label{dd3}
  (L_{r_2-1}M_{r_2}M_{r_2+1} )^{1-x} &\gg& (L_{r_2} M_{r_2+1})^{w_2}, \\ \label{dd4}
   L_{r_3-1}M_{r_3}M_{r_3+1} &\gg & (L_{r_3-1} M_{r_3})^{1-w_2}.
  \end{eqnarray}
  
  {\color{blue}
    1) } From \eqref{dd1} and \eqref{Gstar}, as $L_{r_0} < M_{r_0+1}^{-\alpha^*}$ or $M_{r_0+1}< L_{r_0}^{-1/\alpha^*} $, we deduce the upper bound for the linear form 
\begin{eqnarray*}
 L_{r_0}^{w_1} \ll   L_{r_0-1} M_{r_0}  M_{r_0+1}^{1-w_1} \ll  M_{r_0}^{1-\alpha^*} L_{r_0}^{-\frac{1-w_1}{\alpha^*}} \;\;\textrm {, or}\;\;
 L_{r_0}  \ll M_{r_0}^{ \frac{1-\alpha^*}{w_1+(1-w_1)/\alpha^*}} = M_{r_0}^{-\alpha^*g^*}.
 \end{eqnarray*}

  {\color{blue}
    2)  } From \eqref{dd2} and \eqref{Gstar}, as $L_{r_1-1}M_{r_1} < M_{r_1}^{1-\alpha^*}$, we deduce the lower bound for the coefficient
\[ 1 \ll  M_{r_1+1}^x (L_{r_1-1}M_{r_1})^{x+w_1-1} \ll M_{r_1+1}^x  M_{r_1}^{(1-\alpha^*)(x+w_1-1)}\textrm {, or}\;\;
M_{r_1 +1} \gg M_{r_1} ^{(\alpha^*-1)\frac{x+w_1-1}{x}} = M_{r_1}^{g^*}.\]
The second inequality is satisfied because of \eqref{W1}.

3)   From \eqref{dd3} and \eqref{Gstar}, as $L_{r_2-1} < M_{r_2}^{-\alpha^*}$ or $M_{r_2+1}< L_{r_2}^{-1/\alpha^*} $, we deduce the upper bound for the linear form 
\[ L_{r_2}^{w_2} \ll (L_{r_2-1}M_{r_2})^{1-x} M_{r_2+1}^{1-x-w_2} \ll M_{r_2}^{(1-x)(1-\alpha^*)} L_{r_2}^{\frac{x+w_2-1}{\alpha^*}}
\textrm {, or}\;\;L_{r_2} \ll M_{r_2}^{\frac{(1-\alpha^*)(1-x)}{w_2+(1-x-w_2)/\alpha^*}} = M_{r_2}^{-\alpha^*g^*}.\]
Here we use \eqref{W2}.

4) From \eqref{dd4} and \eqref{Gstar}, as $L_{r_3-1}M_{r_3} < M_{r_3}^{\alpha^*-1}$, we deduce the lower bound for the coefficient
\[  1 \ll (L_{r_3-1}M_{r_3})^{w_2}M_{r_3+1} \ll M_{r_3}^{(\alpha^*-1)w_2} M_{r_3+1}
\textrm {, or}\;\;M_{r_3 +1} \gg M_{r_3} ^{\textcolor{green}{(\alpha^*-1)}w_2} = M_{r_1}^{g^*}.\]


Hence, we proved that one of the following four inequalities holds:
\begin{equation*}
  M_{r_1+1} \gg  M_{r_1}^{g^*}, \;\; M_{r_3+1} \gg M_{r_3}^{g^*},\;\;  L_{r_0} \ll M_{r_0}^{-\alpha^*g^*},\;\;  L_{r_2}\ll M_{r_2}^{-\alpha^*g^*}.
  \end{equation*}
  So we have checked \eqref{equivthmdual} and the result follows.\qed
  
 
\section{Construction of points with given ratio}\label{PGN}

In this last section, we prove the second part of Theorem \ref{MainThm}. To construct points with given ratio, we place ourselves in the context of parametric geometry of numbers introduced by Schmidt and Summerer in \cite{SSfirst, SS}. For the convenience of the reader and the sake of self-containment, we briefly present the parametric geometry of numbers  in section \ref{PGN-intro}. An important theorem by Roy \cite{Roy} enables to construct points with computable exponents of Diophantine approximation out of \emph{Roy-systems}, a combinatorial family of piecewise linear applications. For our purpose, we construct explicitly in Section \ref{PGN-constr} a family of Roy-systems with three parameters. The construction shows how the values $G(n,\alpha)$ and $G^*(n,\alpha^*)$ appear naturally in the context of parametric geometry of numbers, and why they are reached at \emph{regular systems}.\\

\subsection{Parametric geometry of numbers}\label{PGN-intro}

The Parametric Geometry of Numbers answers a question of W. M. Schmidt \cite{SchLN}. Given a convex body and a lattice, we deform either of them with a one parameter diagonal map. We study the behavior of the successive minima in terms of this parameter. It was developed by W. M. Schmidt and L. Summerer \cite{SS,SSfirst}, and further by D.Roy \cite{Roy}\footnote{In \cite{DFSU17,VarPrinc}, Das, Fishman, Simmons and Urba\'nski introduce a variational principle in parametric geometry of numbers that extends Theorem \ref{DR}. They both extend to the case of approximation to a matrix $\btheta$, and provide a quantitative result. Applying the variational principle to our construction, we obtain a lower bound for the Hausdorff dimension of points with given pair of exponents $(\omega,\hat{\omega})$ or $(\lambda, \hat{\lambda})$ satisfying \eqref{e-Mthm}. However, for $c>1$ it is probably not optimal.}.
 
We use the notation introduced by D. Roy in \cite{Roy} which is essentially dual to the one of W. M. Schmidt and L. Summerer \cite{SS}. It follows the presentation in \cite[\S2]{Mar}. We refer the reader to these papers for further details.\\

 Here $ \boldsymbol{x}\cdot \boldsymbol{y}=x_1y_1+ \cdots + x_ny_n$  is the usual scalar product of vectors $\boldsymbol{x}$ and $\boldsymbol{y}$, and $\|\boldsymbol{x}\|_2=\sqrt{\boldsymbol{x}\cdot \boldsymbol{x}}$ is the usual Euclidean norm.\\

Let $\boldsymbol{u} = (u_0, \ldots , u_n)$ be a vector in $\mathbb{R}^{n+1}$, with Euclidean norm $\|\boldsymbol{u}\|_2=1$. For a real parameter $Q\geq1$ we consider the convex body
\[ \mathcal{C}_{\boldsymbol{u}}(Q) = \left\{ \boldsymbol{x} \in \mathbb{R}^{n+1} \mid \|\boldsymbol{x}\|_2 \leq 1 , \; |\boldsymbol{x} \cdot \boldsymbol{u} | \leq Q^{-1}     \right\}. \]
 For $1\leq d \leq n+1$ we denote by $\lambda_d\left( \mathcal{C}_{\boldsymbol{u}}(Q) \right)$ the $d$-th minimum of $\mathcal{C}_{\boldsymbol{u}}(Q)$ relatively to the lattice $\mathbb{Z}^{n+1}$. For $q\geq0$ and $1\leq d \leq n+1$ we set
\[ L_{\boldsymbol{u},d}(q) = \log   \lambda_d\left( \mathcal{C}_{\boldsymbol{u}}(e^q) \right). \]
Finally, we define the \emph{successive minima function} $\boldsymbol{L_u}$ associated with $\boldsymbol{u}$:

\[ \begin{array}{rccl}
 \boldsymbol{L_u} :& [0,\infty ) & \to& \mathbb{R}^{n+1}  \\
  & q & \mapsto& (L_{\boldsymbol{u},1}(q), \ldots , L_{\boldsymbol{u},n+1}(q))  .
  \end{array}\]
  The lattice $\mathbb{Z}^{n+1}$ is invariant under permutation of coordinates. Hence,  $\boldsymbol{L_u}$ remains the same if we permute the coordinates in $\boldsymbol{u}$. Since $\|\boldsymbol{u}\|_2=1$ we can thus assume that $u_0\neq0$. \\
  
The following proposition links the exponents of Diophantine approximation associated with $\boldsymbol{\theta}=\left(\cfrac{u_1}{u_0}, \ldots , \cfrac{u_n}{u_0}\right)$ to the behavior of the map $ \boldsymbol{L_u}$, assuming $u_0\neq0$. 

\begin{prop}\label{prop}
Let $\boldsymbol{u} = (u_0, \ldots , u_n) \in \mathbb{R}^{n+1}$, with Euclidean norm $\|\boldsymbol{u}\|_2=1$ and  $u_0\neq 0$. Set $\boldsymbol{\theta}=\left(\cfrac{u_1}{u_0}, \ldots , \cfrac{u_n}{u_0}\right)$. We have the following relations:

\begin{eqnarray*}
\liminf_{q\to+\infty}  \cfrac{ L_{\boldsymbol{u},1}(q) }{q} = \cfrac{1}{1+{\omega}(\boldsymbol{\theta})} &,&  \limsup_{q\to+\infty} \cfrac{ L_{\boldsymbol{u},1}(q)}{q} = \cfrac{1}{1+\hat{\omega}(\boldsymbol{\theta})},\\
\limsup_{q\to+\infty}  \cfrac{ L_{\boldsymbol{u},n+1}(q) }{q} = \cfrac{\lambda}{1+{\lambda}(\boldsymbol{\theta})} &,& \liminf_{q\to+\infty} \cfrac{ L_{\boldsymbol{u},n+1}(q)}{q} = \cfrac{\hat{\lambda}}{1+\hat{\lambda}(\boldsymbol{\theta})}.
\end{eqnarray*}
\end{prop}

Thus, if we know an explicit map $\boldsymbol{P}=(P_1, \ldots, P_{n+1}): [0,\infty) \to \mathbb{R}^{n+1}$, such that 
$\boldsymbol{L}_{\boldsymbol{u}}-\boldsymbol{P}$ is bounded, then we can compute the $4$ exponents $\hat{\omega}(\boldsymbol{\theta}), \hat{\lambda}(\boldsymbol{\theta}), {\omega}(\boldsymbol{\theta}), {\lambda}(\boldsymbol{\theta})$ for the above point $\boldsymbol{\theta}$ upon replacing $L_{\boldsymbol{u},i}$ by $P_i$ in the above formulas for $ i= 1$ or $n+1$. For this purpose, Roy introduced \cite{Roy} what we will call \emph{Roy-systems}.

\begin{defn}
Let $I$ be a subinterval of $[0,\infty)$ with non-empty interior. A generalized $(n+1)$-system on $I$ is a continuous piecewise linear map $\boldsymbol{P} = (P_1, \ldots , P_{n+1}): I \to \mathbb{R}^{n+1}$ with the following three properties.

\begin{description}

\item[(S1)]{For each $q\in I$, we have $0\leq P_1(q) \leq \cdots \leq P_{n+1}(q) $  and $P_1(q) + \cdots + P_{n+1}(q) =q$.}

\item[(S2)]{ If $H$ is a non-empty open subinterval of $I$ on which $\boldsymbol{P}$ is differentiable, then there are integers $\underline{r}, \bar{r}$ with $1\leq \underline{r} \leq \bar{r} \leq n+1$ such that $P_{\underline{r}}, P_{\underline{r}+1}, \ldots , P_{\bar{r}}$ coincide on the whole interval $H$ and have slope $1/(\bar{r}-\underline{r}+1)$ while any other component $P_k$ of $\boldsymbol{P}$ is constant on $H$ .}

\item[(S3)]{ If $q$ is an interior point of $I$ at which $\boldsymbol{P}$ is not differentiable, if $\underline{r}, \bar{r}, \underline{s},\bar{s}$ are the integers for which
\[ P_k'(q^-) = \cfrac{1}{\bar{r}-\underline{r}+1}  \quad  (\underline{r} \leq k \leq \bar{r}) \quad \textrm{  and  }  \quad P_k'(q^+) = \cfrac{1}{\bar{s}-\underline{s}+1}  \quad  (\underline{s} \leq k \leq \bar{s}) \; , \]
and if $\underline{r}<\bar{s}$, then we have $P_{\underline{r}}(q) = P_{\underline{r}+1}(q) = \cdots = P_{\bar{s}}(q)$.
 }\\
\end{description}
\end{defn}

Here $P_k'(q^-)$ (resp. $P_k'(q^+)$) denotes the left (resp. right) derivative of $P_k$ at $q$.

\begin{thm}[Roy, 2015]\label{DR}
For each non-zero point $\boldsymbol{u} \in \mathbb{R}^{n+1}$, there exists $q_0\geq 0$ and a generalized $(n+1)$-system $\boldsymbol{P}$ on $[q_0,\infty)$ such that $\boldsymbol{L_u} - \boldsymbol{P}$ is bounded on $[q_0,\infty)$. Conversely, for each generalized $(n+1)$-system $\boldsymbol{P}$ on an interval $[q_0,\infty)$ with $q_0\geq0$, there exists a non-zero point $\boldsymbol{u}\in\mathbb{R}^{n+1}$ such that $\boldsymbol{L_u} - \boldsymbol{P}$ is bounded on $[q_0,\infty)$.\\
\end{thm}

In view of the remark following Proposition \ref{prop}, this result reduces the construction of points with prescribed exponents of Diophantine approximation to a combinatorial study of Roy-systems.\\

\subsection{Construction of a family of Roy-systems with three parameters}\label{PGN-constr}

In this section, we construct explicitly a family of Roy-systems with parameters. According to Proposition \ref{prop} and Theorem \ref{DR}, these Roy-systems provide the existence of points with requested exponents, proving the second part of Theorem \ref{MainThm}.

\paragraph{Approximation by one linear form.}
Fix the dimension $n\geq2$, and consider the case of approximation by one linear form. Fix the three parameters $\hat{\omega}\geq n$, $\rho=G^*(n,\hat{\omega})$ and $c \geq 1$. Consider the Roy-system $\boldsymbol{P}$ on the interval $[1,c\rho]$ depending on these parameters whose combined graph is given below by Figure \ref{PGN1}, where
\[ P_1(1) = \cfrac{1}{1+ \hat{\omega}}, \; \; P_k(1) = \rho^{k-2}P_1(1)\;  \textrm{ for } 2\leq k \leq n+1 \textrm{ and } P_{k}(c\rho)=c\rho P_{k}(1) \textrm{ for } 1\leq k \leq n+1.\]

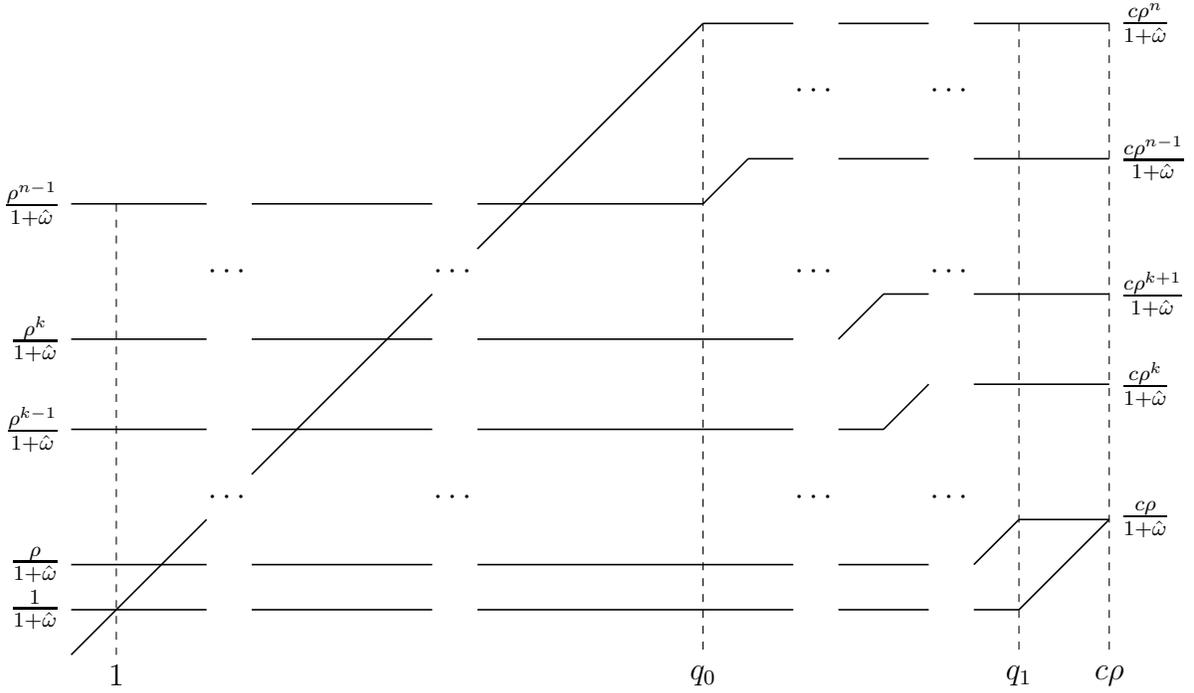
\begin{figure}[!h] 
 \begin{center}
 \begin{tikzpicture}[scale=0.6]

\draw (3.5,3.5) node {$\cdots$};
\draw (8.5,3.5) node {$\cdots$};
\draw (16.5,3.5) node {$\cdots$};
\draw (19.5,3.5) node {$\cdots$};

\draw (3.5,8.5) node {$\cdots$};
\draw (8.5,8.5) node {$\cdots$};
\draw (16.5,8.5) node {$\cdots$};
\draw (19.5,8.5) node {$\cdots$};

\draw (16.5,12.5) node {$\cdots$};
\draw (19.5,12.5) node {$\cdots$};
 
\draw[black, semithick] (3,3)--(0,0) node [above,black] {};
\draw[black, semithick] (4,4)--(8,8) node [above,black] {};
\draw[black, semithick] (9,9)--(14,14) node [above,black] {};
\draw[black, semithick] (14,10)--(15,11) node [left,black] {};
\draw[black, semithick] (17, 7)--(18,8) node [left,black] {};
\draw[black, semithick] (18, 5)--(19,6) node [left,black] {};
\draw[black, semithick] (20, 2)--(21,3) node [left,black] {};
\draw[black, semithick] (21, 1)--(23,3) node [left,black] {};

\draw[black, semithick] (3,1)--(0,1) node [left,black] {$\frac{1}{1+ \hat{\omega}}$};
\draw[black, semithick] (4,1)--(8,1) node [left,black] {};
\draw[black, semithick] (16,1)--(9,1) node [left,black] {};
\draw[black, semithick] (17,1)--(19,1) node [left,black] {};
\draw[black, semithick] (20,1)--(21,1) node [left,black] {};

\draw[black, semithick] (3,2)--(0,2) node [left,black] {$\frac{\rho}{1+ \hat{\omega}}$};
\draw[black, semithick] (4,2)--(8,2) node [left,black] {};
\draw[black, semithick] (16,2)--(9,2) node [left,black] {};
\draw[black, semithick] (17,2)--(19,2) node [left,black] {};

\draw[black, semithick] (3, 5)--(0,5) node [left,black] {$\frac{\rho^{k-1}}{1+ \hat{\omega}}$};
\draw[black, semithick] (4,5)--(8,5) node [left,black] {};
\draw[black, semithick] (16,5)--(9,5) node [left,black] {};
\draw[black, semithick] (17,5)--(18,5) node [left,black] {};

\draw[black, semithick] (3,7)--(0,7) node [left,black] {$\frac{\rho^k}{1+ \hat{\omega}}$};
\draw[black, semithick] (4,7)--(8,7) node [left,black] {};
\draw[black, semithick] (16,7)--(9,7) node [left,black] {};

\draw[black, semithick] (14,14)--(16,14) node [left,black] {};
\draw[black, semithick] (17,14)--(19,14) node [left,black] {};
\draw[black, semithick] (20,14)--(23,14) node [right,black] {$\frac{c\rho^n}{1+ \hat{\omega}}$};

\draw[black, semithick] (16,11)--(15,11) node [left,black] {};
\draw[black, semithick] (17,11)--(19,11) node [left,black] {};
\draw[black, semithick] (20,11)--(23,11) node [right,black] {$\frac{c\rho^{n-1}}{1+ \hat{\omega}}$};

\draw[black, semithick] (20,8)--(23,8) node [right,black] {$\frac{c\rho^{k+1}}{1+ \hat{\omega}}$};
\draw[black, semithick] (19,8)--(18,8) node [left,black] {};

\draw[black, semithick] (20,6)--(23,6) node [right,black] {$\frac{c\rho^{k}}{1+ \hat{\omega}}$};

\draw[black, semithick] (21,3)--(23,3) node [right,black] {$\frac{c\rho}{1+ \hat{\omega}}$};

\draw[black, semithick] (3,10)--(0,10) node [left,black] {$\frac{\rho^{n-1}}{1+ \hat{\omega}}$};
\draw[black, semithick] (4,10)--(8,10) node [left,black] {};
\draw[black, semithick] (14,10)--(9,10) node [left,black] {};

\draw[dashed, black] (1,10)--(1,0) node [below] { $1$};
\draw[dashed, black] (23,14)--(23,0) node [below] { $c\rho$};
\draw[dashed, black] (21,14)--(21,0) node [below] { $q_1$};
\draw[dashed, black] (14,14)--(14,0) node [below] { $q_0$};

 \end{tikzpicture}
 \end{center}
 \caption{Pattern of the combined graph of $\boldsymbol{P}$ on the fundamental interval $[1,c\rho]$ }\label{PGN1}
 \end{figure}

The fact that all coordinates sum up to $1$ for $q=1$ follows from $\rho$ being a root of the polynomial $R^*_{n,\homega}$ defined in \eqref{polybis}. On each interval between two consecutive division points, there is only one line segment with slope $1$. On $[1,q_0]$, there is one line segment of slope $1$ starting from the value $\frac{1}{1+ \hat{\omega}}$ and reaching the value $\frac{c \rho^{n}}{1+ \hat{\omega}}$. Then, each component $P_k$ increases from $\frac{\rho^{k-1}}{1+ \hat{\omega}}$ to $\frac{c\rho^{k-1}}{1+ \hat{\omega}}$ with slope $1$ where $k$ decreases from $k=n$ down to $k=2$.\\

We extend $\boldsymbol{P}$ to the interval $[1,\infty)$ by self-similarity. This means, $\boldsymbol{P}(q)=(c\rho)^{m}\boldsymbol{P}((c\rho)^{-m}q)$ for all integers $m$. In view of the value of $\boldsymbol{P}$ and its derivative at $1$ and $c\rho$, one sees that the extension provides a Roy-system on $[1,\infty)$.\\

Note that for $c=1$, the parameters $q_0$ and $q_1$ coincide and we constructed a \emph{regular system}.\\

Roy's Theorem \cite{Roy} provides the existence of a point $\boldsymbol{\theta}$ in $\mathbb{R}^{n}$ such that 
\begin{equation*}
\cfrac{1}{1+\hat{\omega}(\boldsymbol{\theta})} =  \limsup_{q\to+\infty} \cfrac{P_{1}(q)}{q} \;\; \textrm{ and } 
 \cfrac{1}{1+\omega(\boldsymbol{\theta})} = \liminf_{q\to+\infty} \cfrac{P_{1}(q)}{q}.
   \end{equation*}
Here, self-similarity ensures that the $\limsup$ (resp. $\liminf$) is in fact the maximum (resp. the minimum) on the interval $[1,c\rho[$. Thus,
\[ \cfrac{1}{1+\hat{\omega}(\boldsymbol{\theta})} =  \max_{[1,c\rho[} \cfrac{P_{1}(q)}{q} = \cfrac{P_1(1)}{1} = \cfrac{1}{1+ \hat{\omega}},\;\; \;\;
 \cfrac{1}{1+\omega(\boldsymbol{\theta})} = \min_{[1,c\rho[} \cfrac{P_{1}(q)}{q} = \cfrac{P_1(q_1)}{q_1} = \cfrac{1}{c\rho\hat{\omega} +1} 
\]
where \[q_1 = \cfrac{c(\rho^n + \cdots + \rho^2 + \rho) + 1}{1+ \hat{\omega}} = \cfrac{c(\rho\hat{\omega})+1}{1+ \hat{\omega}}.\] 
The simplification comes from $\rho$ being a root of the polynomial $R^*_{n,\homega}$ defined in \eqref{polybis}. Hence, $ \hat{\omega}(\btheta) = \hat{\omega}$ and $ \omega(\btheta) = c\rho \hat{\omega}$, and we constructed the required points since $c\geq1$ and $\rho=G^*(n,\hat{\omega})$.\\

\paragraph{Simultaneous approximation}
Consider the case of simultaneous approximation. Fix the three parameters $1 \geq \hat{\lambda}\geq 1/n$, $\rho=G(n,\hat{\lambda})$ and $c \geq 1$. Consider the Roy-system $\boldsymbol{P}$ on the interval $[1,c\rho]$ depending on these parameters whose combined graph is given below by Figure \ref{PGN2}, where
\[ P_{n+1}(1) = \cfrac{\hat{\lambda}}{1+\hat{\lambda}}, \; \; P_{k}(1) = \rho^{n-k}P_1(1) \textrm{ for } 1\leq k \leq n \textrm{ and } P_{k}(c\rho)=c\rho P_{k}(1) \textrm{ for } 2\leq k \leq n+1.\]

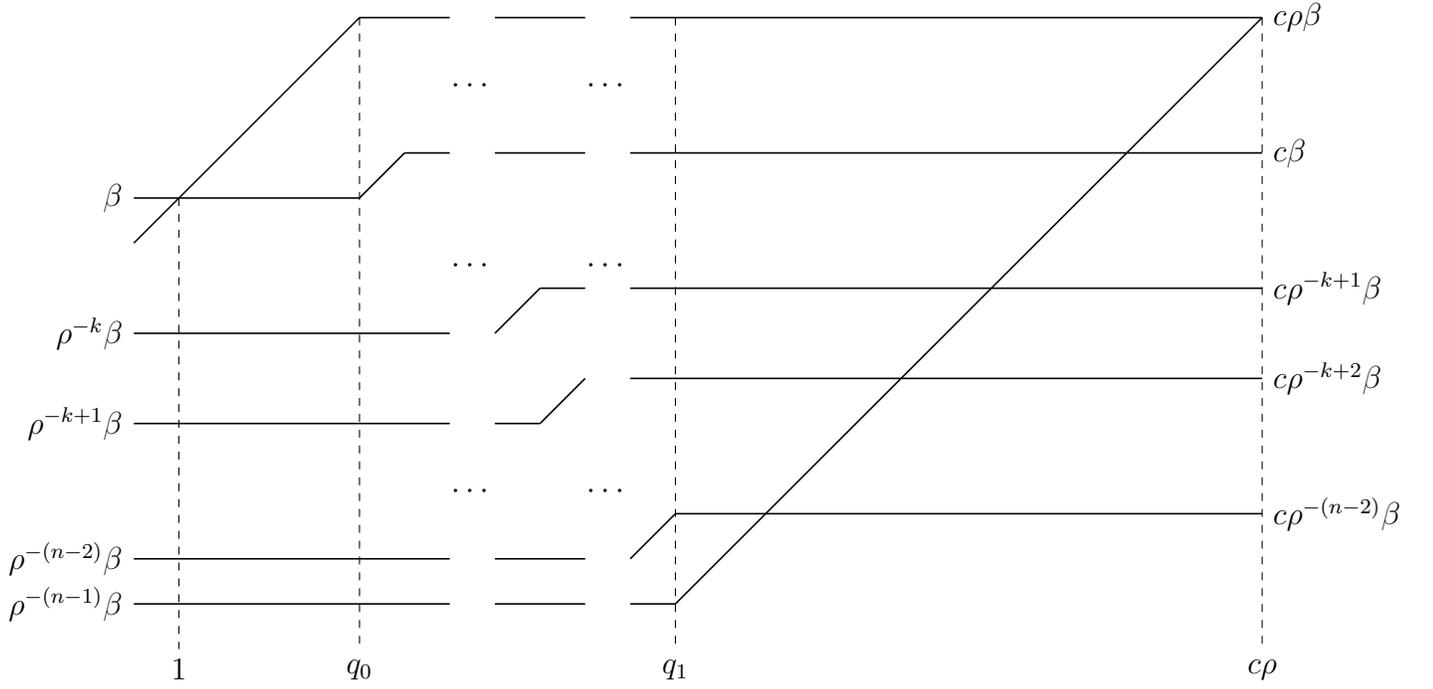
\begin{figure}[!h] 
 \begin{center}
 \begin{tikzpicture}[scale=0.6]

\draw (16.5,3.5) node {$\cdots$};
\draw (19.5,3.5) node {$\cdots$};

\draw (16.5,8.5) node {$\cdots$};
\draw (19.5,8.5) node {$\cdots$};

\draw (16.5,12.5) node {$\cdots$};
\draw (19.5,12.5) node {$\cdots$};

\draw[black, semithick] (9,9)--(14,14) node [above,black] {};
\draw[black, semithick] (14,10)--(15,11) node [left,black] {};
\draw[black, semithick] (17, 7)--(18,8) node [left,black] {};
\draw[black, semithick] (18, 5)--(19,6) node [left,black] {};
\draw[black, semithick] (20, 2)--(21,3) node [left,black] {};
\draw[black, semithick] (21, 1)--(34,14) node [left,black] {};

\draw[black, semithick] (16,1)--(9,1) node [left,black] {$\rho^{-(n-1)}\beta$};
\draw[black, semithick] (17,1)--(19,1) node [left,black] {};
\draw[black, semithick] (20,1)--(21,1) node [left,black] {};

\draw[black, semithick] (16,2)--(9,2) node [left,black] {$\rho^{-(n-2)}\beta$};
\draw[black, semithick] (17,2)--(19,2) node [left,black] {};

\draw[black, semithick] (16,5)--(9,5) node [left,black] {$\rho^{-k+1}\beta$};
\draw[black, semithick] (17,5)--(18,5) node [left,black] {};

\draw[black, semithick] (16,7)--(9,7) node [left,black] {$\rho^{-k}\beta$};

\draw[black, semithick] (14,14)--(16,14) node [left,black] {};
\draw[black, semithick] (17,14)--(19,14) node [left,black] {};
\draw[black, semithick] (20,14)--(34,14) node [right,black] {$c\rho \beta$};

\draw[black, semithick] (16,11)--(15,11) node [left,black] {};
\draw[black, semithick] (17,11)--(19,11) node [left,black] {};
\draw[black, semithick] (20,11)--(34,11) node [right,black] {$c \beta $};

\draw[black, semithick] (20,8)--(34,8) node [right,black] {$c\rho^{-k+1}\beta$};
\draw[black, semithick] (19,8)--(18,8) node [left,black] {};

\draw[black, semithick] (20,6)--(34,6) node [right,black] {$c\rho^{-k+2} \beta$};

\draw[black, semithick] (21,3)--(34,3) node [right,black] {$c\rho^{-(n-2)}\beta$};

\draw[black, semithick] (14,10)--(9,10) node [left,black] {$\beta$};

\draw[dashed, black] (10,10)--(10,0) node [below] { $1$};
\draw[dashed, black] (34,14)--(34,0) node [below] { $c\rho$};
\draw[dashed, black] (21,14)--(21,0) node [below] { $q_1$};
\draw[dashed, black] (14,14)--(14,0) node [below] { $q_0$};

 \end{tikzpicture}
 \end{center}
 \caption{Pattern of the combined graph of $\boldsymbol{P}$ on the fundamental interval $[1,c\rho]$, where $\beta=\frac{\hat{\lambda}}{1+\hat{\lambda}}$. }\label{PGN2}
 \end{figure}

The fact that all coordinates sum up to $1$ for $q=1$ follows from $\rho$ being the root of the polynomial $R_{n,\hat{\lambda}}$ defined in \eqref{poly}. Up to change of origin and rescaling, this is the same pattern as shown by Figure \ref{PGN1}. We extend $\boldsymbol{P}$ to the interval $[1,\infty)$ by self-similarity. This means, $\boldsymbol{P}(q)=(c\rho)^{m}\boldsymbol{P}((c\rho)^{-m}q)$ for all integers $m$. In view of the value of $\boldsymbol{P}$ and its derivative at $1$ and $c\rho$, one sees that the extension provides a Roy-system on $[1,\infty)$.\\

 For $c=1$, the parameters $q_0$ and $q_1$ coincide and we constructed a \emph{regular system}.\\

Roy's Theorem \cite{Roy} provides the existence of a point $\boldsymbol{\theta}$ in $\mathbb{R}^{n}$ such that 
\begin{equation*}
\cfrac{\hat{\lambda}(\boldsymbol{\theta})}{1+\hat{\lambda}(\boldsymbol{\theta})} =  \liminf_{q\to+\infty} \cfrac{P_{n+1}(q)}{q} \;\; \textrm{ and } \;\;
 \cfrac{\lambda(\boldsymbol{\theta})}{1+\lambda(\boldsymbol{\theta})} = \limsup_{q\to+\infty} \cfrac{P_{n+1}(q)}{q}.
\end{equation*}

Again, self-similarity ensures that the $\limsup$ (resp. $\liminf$) is in fact the maximum (resp. the minimum) on the interval $[1,c\rho[$. Thus,
\begin{align*}
\cfrac{\hat{\lambda}(\boldsymbol{\theta})}{1+\hat{\lambda}(\boldsymbol{\theta})} &=  \min_{[1,c\rho[} \cfrac{P_{n+1}(q)}{q} = \cfrac{P_{n+1}(1)}{1} = \cfrac{\hat{\lambda}}{1+ \hat{\lambda}},\\
 \cfrac{\lambda(\boldsymbol{\theta})}{1+\lambda(\boldsymbol{\theta})} &= \max_{[1,c\rho[} \cfrac{P_{n+1}(q)}{q} = \cfrac{P_{n+1}(q_0)}{q_0} = \cfrac{c\rho\hat{\lambda}}{1+ c\rho\hat{\lambda}}
\end{align*}
where \[q_0 = \cfrac{\hat{\lambda}(c\rho + (1 + \rho^{-1} + \cdots + \rho^{-(n-1)}))}{1+ \hat{\lambda}} = \cfrac{1+c \rho\hat{\lambda}}{1+\hat{\lambda}}.\] 
The simplification comes from $\rho$ being the root of the polynomial $R_{n,\hat{\lambda}}$ defined in \eqref{poly}. Hence, $ \hat{\lambda}(\btheta) = \hat{\lambda}$ and $\lambda(\btheta) = c\rho \hat{\lambda}$, and we constructed the required points since $c\geq1$ and $\rho=G(n,\hat{\lambda})$.\\

For both simultaneous approximation and approximation by a linear form, the constructed 3-parameters families of self-similar Roy-systems provide infinitely many distinct points $\btheta\in \bbR^n$ via Roy's theorem with $\mathbb{Q}$-linearly independent coordinates with $1$, as explained in \cite[end of \S3]{Mar}. The $\bbQ$-linear independence comes from $P_1(q)\to \infty$ when $q \to \infty$. The construction of infinitely many points follows from a change of origin with the same pattern and self-similarity. The degenerate cases when some of the exponents are infinite is managed by (non self-similar) Roy-systems consisting in patterns described by Figure \ref{PGN1} or \ref{PGN2}, where the parameter $c$ and/or $\hat{\lambda}$ or $\hat{\omega}$  increases to infinity at each repetition. An explicit example of this trick is to be found in \cite[end of \S3]{Mar}.\qed\\

\paragraph{Aknowledgment}
We are very grateful for the hospitality of Mathematisches Forschungsinstitut Oberwolfach. An important part of this work has been done during Research in Pairs stay 1823r.  We  are also 
very grateful for the hospitality of the 
Centro Internazionale per la Ricerca Matematica (C.I.R.M.) of Trento, as the last part of the work was done during Research in Pairs stay 
there during May 19-June 1, 2019.

\bibliographystyle{amsplain}

\end{document}